\documentclass[11pt]{article}

\usepackage[T1]{fontenc}
\usepackage{lmodern}
\usepackage{algorithm}
\usepackage{algpseudocode}
\usepackage[a4paper,margin=1in]{geometry}
\usepackage{microtype}
\usepackage{amssymb,mathtools}
\usepackage{bm}
\usepackage{booktabs}
\usepackage{xcolor}
\usepackage{pgfplots}
\pgfplotsset{compat=1.18}
\usepackage{enumitem}
\usepackage{amsthm}
\usepackage[hidelinks]{hyperref}
\usepackage[nameinlink,capitalise,noabbrev]{cleveref}

\numberwithin{equation}{section}
\allowdisplaybreaks
\setlist{nosep}
\newtheorem{theorem}{Theorem}[section]
\newtheorem{proposition}[theorem]{Proposition}
\newtheorem{lemma}[theorem]{Lemma}
\newtheorem{corollary}[theorem]{Corollary}
\theoremstyle{definition}
\newtheorem{definition}[theorem]{Definition}
\theoremstyle{remark}
\newtheorem{remark}[theorem]{Remark}

\newcommand{\R}{\mathbb R}

\newcommand{\E}{\mathbb E}
\newcommand{\Var}{\operatorname{Var}}
\newcommand{\SCV}{\operatorname{SCV}}
\newcommand{\Pcal}{\mathcal P}

\newcommand{\Vcal}{\mathcal V}
\newcommand{\Lag}{L}
\newcommand{\dd}{\,\mathrm d}
\newcommand{\e}{\mathrm e}

\newcommand{\Aop}{\mathcal A}
\newcommand{\lc}{\operatorname{lc}}

\DeclareMathOperator*{\argmin}{arg\,min}

\newcommand{\MP}{\mathrm{MP}}
\newcommand{\normw}[1]{\left\lVert #1\right\rVert_{E}}
\newcommand{\ip}[2]{\left\langle #1,#2\right\rangle_{E}}

\title{Least Variability in a Polynomial-Square Class\\ of Rational Kernels}
\author{Maria Laura Battagliola and Oscar Peralta}
\date{}

\begin{document}
\maketitle

\begin{abstract}
We solve an extremal problem for positive randomization kernels that
concentrate a random time as tightly as possible around a deterministic
target, within the class obtained by exponentially damping the square
of a polynomial. The construction, borrowed from the concentrated
matrix-exponential literature, automatically guarantees nonnegativity, gives
the kernel a Laplace transform with a single repeated real pole, and
contains the classical Erlang randomizer as the special case in which the
polynomial is a pure power. The half-polynomial need not be
real-rooted and so may have nonreal conjugate zeros, yet every optimizer is
proved to be real-rooted. After normalizing the kernel to unit mean,
the minimum variance at polynomial degree $m$ turns out to equal the
smallest relative gap between adjacent zeros of the Laguerre polynomial
$L_{m+2}$, with every optimizer obtained by deleting a pair attaining this
minimum. This characterization yields the
minimal variance, the normalized density, and every optimizer in closed form.
The optimizer can be computed from the spectrum of a symmetric tridiagonal
Jacobi matrix. In the large-order limit, the minimal variance decays
quadratically in the order of the kernel, a marked improvement over the
linear decay rate of the Erlang benchmark, and the deleted pair of zeros
localizes at normalized location $2$, with limiting absolute separation
$2\pi$.
\end{abstract}

\paragraph{Keywords.}
Concentrated matrix-exponential distributions; Laguerre polynomials; squared coefficient of variation; positive rational kernels; extremal polynomial problems.

\paragraph{Mathematics Subject Classification (2020).}
Primary 41A20; Secondary 33C45, 60E05.

\section{Introduction}\label{sec:introduction}

Randomization methods replace a fixed evaluation time by a suitably chosen
random horizon. Let $t_0>0$ and let $U$ be a nonnegative random variable
with mean one. For a time-indexed quantity $h\colon[0,\infty)\to\R$, one
then approximates $h(t_0)\approx \E[h(t_0U)]$.
When $U$ is concentrated near one, the randomized time $t_0U$ remains close
to the deterministic target $t_0$. At the same time, an appropriate choice
of the law of $U$ may convert the original fixed-horizon calculation into a
more tractable resolvent, recursive, or matrix-analytic problem. This is the
principle behind maturity randomization and Carr's Canadization procedure in
option valuation \cite{carr1998,bouchardelkarouitouzi2005}, as well as
Erlangization methods for deterministic horizons in ruin theory and related
stochastic models \cite{asmussenavramusabel2002,bladtnielsenperalta2019}.

These methods lead naturally to an extremal problem for the randomizer.
Without structural restrictions the problem is trivial, since the point mass
at $1$ has zero variance. We therefore consider absolutely continuous
randomizers whose densities are nonnegative, whose Laplace transforms are
rational with one prescribed negative real pole location, and whose
time-domain form is an exponentially damped polynomial square. Throughout, a \emph{positive rational kernel} means a nonnegative kernel whose Laplace transform is rational. The
problem is to determine which admissible mean-one kernel has the smallest
variance.

The classical benchmark is the mean-one Erlang randomizer of order $N$, $U\sim \text{Erlang}(N,N)$, whose density and Laplace transform are
\begin{equation}\label{eq:intro-erlang}
\kappa_N^{\mathrm{Er}}(u)
\mathrel{{=}}
\frac{N^N}{(N-1)!}u^{N-1}\e^{-Nu},
\qquad
K_N^{\mathrm{Er}}(s)
\mathrel{{=}}
\left(\frac{N}{N+s}\right)^N.
\end{equation}
Here, $\text{Var}(U) = 1/N$, and its Laplace transform has one negative real pole
of multiplicity $N$. Aldous and Shepp \cite{aldousshepp1987} proved that,
at fixed mean and order, the Erlang distribution has minimum variance among
phase-type distributions. Phase-type distributions form a subclass of
matrix-exponential distributions, which may equivalently be characterized
by nonnegative distributions with rational Laplace transforms
\cite{bladtnielsen2017}. The phase-type subclass has a direct interpretation
as the absorption time of a finite-state Markov jump process, whereas a
general matrix-exponential distribution need not admit such an absorption-time
representation; see also \cite{peralta2023} for a probabilistic interpretation
obtained through exponential tilting. The class studied here is considerably
smaller than the full matrix-exponential class: it has a single real damping
rate and an exponentially damped polynomial-square density, so its transform
has a single repeated real pole. It contains
the Erlang benchmark, but is not restricted to phase-type distributions. We ask
how much the variance can be reduced relative to Erlang within this class while
retaining its repeated-pole structure.

For $a\in\Pcal_m\setminus\{0\}$, where $\Pcal_m$ is the space of real
polynomials of degree at most $m$, and for a damping rate $\lambda>0$, define
\begin{equation}\label{eq:intro-class}
\kappa_{\lambda,a}(u)
\mathrel{{=}}
\frac{\lambda\e^{-\lambda u}a(\lambda u)^2}
{\displaystyle\int_0^\infty \e^{-t}a(t)^2\dd t}.
\end{equation}
By construction, this density is nonnegative. If $\deg a=m$, its Laplace
transform has one distinct pole location, $-\lambda$, of exact multiplicity $N=2m+1$,
so the damping rate fixes the pole location while the polynomial
degree fixes its multiplicity. We choose $\lambda$ so that the kernel
has mean one and minimize its variance over polynomial shapes of exact degree
$m$. The odd-order Erlang density belongs to this class: taking
$a(t)=t^m$ and $\lambda=2m+1$ reduces \eqref{eq:intro-class} to
\eqref{eq:intro-erlang} with $N=2m+1$.

No real-rootedness assumption is imposed on $a$. Since $a$ has real
coefficients, any nonreal zeros occur in conjugate pairs, and these are
zeros of the polynomial factor rather than poles of the Laplace transform,
so the transform still has the single real pole location $-\lambda$ even
when $a$ has nonreal zeros.

Expanding $a^2$ shows that the density $\kappa_{\lambda,a}$ in
\eqref{eq:intro-class} is a signed linear combination of common-rate Erlang
densities of orders $1$ through $N$, an expansion recorded precisely in
Remark~\ref{aw:rem:confluent}. We refer to $U$, or equivalently to its law, as the \emph{randomizer}. Any quantity obtained from the randomizer by a linear
operation, such as the expectation $\E[h(t_0U)]$ or the Laplace transform
$K(s)$, therefore decomposes into the same operation applied to each Erlang
component, recombined with the signed weights from the expansion.

Variance is the natural concentration criterion from both the randomization
and transform viewpoints. We show that, under mild regularity conditions on $h$, the
error produced by replacing $t_0$ with $t_0U$ is controlled by
$\Var(U)$. 
Similarly, if
$K(s)=\E[\e^{-sU}]$, then matching the mass and mean of the degenerate randomizer
makes the first discrepancy between $K(s)$ and $\e^{-s}$ near the
origin depend on the variance. Smaller variance therefore means both a more
concentrated randomized horizon and closer local agreement with the
deterministic Laplace transform. The latter interpretation is developed in
\cref{sec:positive-kernels}. The optimization considered here is,
nevertheless, not a problem of best uniform, weighted-uniform, or
integral-norm rational approximation of $\e^{-s}$ on a nontrivial interval.
It is a constrained moment extremal problem within the class
\eqref{eq:intro-class}.

Beyond the probabilistic motivation described above, the problem connects with several established theories. One such connection is with the
theory of concentrated matrix-exponential (CME) distributions. The concentrated matrix-exponential line of research was initiated by Horv\'ath, S\'af\'ar, Telek, and Z\'amb\'o \cite{horvathsafartelekzambo2016}, and the connection between
concentrated nonnegative matrix-exponential functions and positive,
non-overshooting randomization kernels was developed by Horv\'ath,
Talyig\'as, and Telek \cite{horvathtalyigastelek2018}. Stable high-order
constructions and refinements of the associated nonlinear optimization were
given in
\cite{horvathhorvathtelek2020,almousatelek2021}, while the corresponding
numerical inverse Laplace-transform method and its implementation were
studied in \cite{horvathetal2020nilt,almousaetal2022cme}. M\'esz\'aros and
Telek subsequently formulated the real-eigenvalue branch, denoted CME-R,
and developed numerical procedures for finding highly concentrated
real-eigenvalue matrix-exponential distributions \cite{meszarostelek2022}.
The full CME-R problem is broader than the one treated here because it can
involve several distinct real pole locations. Within their numerical study,
a tractable repeated-pole family is parameterized by real zeros,
\[
 a(z)=c\prod_{r=1}^m(z-\tau_r),\qquad \tau_r\in\R,
\]
so that the density is proportional to
$\e^{-\lambda u}\prod_{r=1}^m(\lambda u-\tau_r)^2$. The class
\eqref{eq:intro-class} keeps the same single repeated real pole and
square-polynomial structure but drops the real-rootedness restriction on
$a$. Thus, it is a subclass of full CME-R, but an enlargement of this
real-rooted repeated-pole family (strict for $m\geq2$). Nonreal conjugate
zeros of $a$ do not introduce complex ME eigenvalues because they are zeros
of the time-domain polynomial factor rather than transform poles.

M\'esz\'aros and Telek \cite{meszarostelek2022} reported numerical candidates in the
real-rooted repeated-pole family and observed an approximate $m^{-1.85}$
decay at orders $N=2m+1$. The theorem below optimizes over the larger polynomial-square
family \eqref{eq:intro-class}, which allows the polynomial factor $a$ to have nonreal conjugate roots, and proves that every optimizer is nevertheless
real-rooted. Consequently, enlarging the search space to allow nonreal zeros
does not lower the optimum, since the minimum over our class equals the minimum
over their real-rooted repeated-pole family. What remains outside the scope
of the theorem is the genuinely broader CME-R problem with more general
real-pole architectures.

Broader CME and CME-R parameterizations report smaller finite-order SCVs at
some orders, but these values are obtained by order-by-order numerical
searches and are not accompanied by global optimality guarantees for the
full classes. The result below instead certifies every optimizer within the
repeated-real-pole square-polynomial subclass.

The same problem also has an approximation-theoretic interpretation.
Rational approximation with prescribed or concentrated poles classically
asks for best uniform or weighted-uniform approximation of the exponential
by general rational functions. Fixed-pole degree estimates
\cite{anderssonganelius1977}, concentrated-pole approximation of the decaying exponential on the positive half-line \cite{andersson1981}, and related real-pole constructions
\cite{borwein1983} belong to
this tradition. Orthogonal rational functions provide a broader setting
for prescribed-pole questions \cite{bultheeletal1999}, while Pad\'e
approximation and quadrature are closely connected to rational approximation
of the exponential more generally
\cite{bakergravesmorris,prevostrivoal2007}. The problem considered here is an extremal polynomial problem, in the broad tradition of relations between optimal measures and extremal polynomial growth \cite{boslevenbergortega2021}. Its distinctive feature is that positivity, normalization, a fixed pole architecture, and a moment-concentration objective are imposed simultaneously. Through the representation \eqref{eq:intro-class}, these conditions become constraints on the polynomial factor, while variance minimization supplies the extremal criterion.

Our solution proceeds by converting the variance minimization into a
sequence of polynomial and spectral reductions. Scale invariance and a
change of variables first transform the nonlinear moment ratio into a
Rayleigh-type quotient centered at a fixed point. The stationarity
conditions force the extremizing polynomial to satisfy a system of
Laguerre orthogonality relations. These relations yield a rigidity result that reduces the optimizer, after removing one additional degree of freedom, to a single Laguerre mode. Applying Gauss--Laguerre quadrature then shows that the two zeros removed from this mode must be adjacent. Finally, strict monotonicity of the minimum logarithmic Laguerre zero spacing across consecutive degrees selects the exact polynomial degree. Since Laguerre zeros are the eigenvalues of a
symmetric tridiagonal Jacobi matrix, the same argument produces an
exhaustive construction.

To state the result, let $\Lag_n$ denote the ordinary Laguerre polynomial,
with zeros $0<x_{1,n}<\cdots<x_{n,n}$,
and define, for adjacent zeros,
\begin{equation}
\label{eq:deltas_intro}
 {\gamma_{j,n}}
=
\frac{(x_{j+1,n}-x_{j,n})^2}
{4x_{j,n}x_{j+1,n}},
\qquad
\Delta_n
=
\min_{1\leq j<n}{\gamma_{j,n}}. 
\end{equation}
Our main theorem shows that the minimum variance at polynomial degree $m$ is
$\mathfrak v_m:=\Delta_{m+2}$, and that every minimizing polynomial is obtained by deleting
a minimizing adjacent pair of zeros from $\Lag_{m+2}$. In rational order
$N=2m+1$, the optimal variance satisfies
$$
\mathfrak v_m\sim\frac{\pi^2}{N^2}.
$$
The Erlang benchmark has variance $1/N$, so enlarging the admissible class
from phase-type Erlang randomizers to the positive rational kernels considered
here changes the attainable concentration rate from order $N^{-1}$ to order
$N^{-2}$. The asymptotic result follows from a nonasymptotic logarithmic-gap
bound and the Marchenko--Pastur limit for normalized Laguerre zeros. The
same analysis localizes every minimizing pair at the normalized point $2$,
and shows that the absolute separation of the deleted zeros tends to
$2\pi$.

For comparison, the optimization-free construction of Battagliola and
Peralta \cite{battagliolaperalta2026} has squared coefficient of variation of
order $\log^3(N)/N^2$. The present $N^{-2}$ rate is therefore sharper, and its proof uses Laguerre geometry and quadrature
rather than an explicit trigonometric construction.

The result provides complementary advantages in each of the connected
theories. For the square-polynomial with a single repeated real pole part of the CME-R problem, it replaces order-by-order nonlinear
optimization by an exact characterization of every optimizer, together with
a Jacobi matrix construction and a sharp quadratic asymptotic law.
From the approximation-theoretic viewpoint, it gives an explicit solution
to a positivity-constrained moment extremal problem and identifies the
extremizer through an adjacent-zero deletion rule for Laguerre polynomials.
From the randomization viewpoint, it produces repeated-pole kernels that are
asymptotically much more concentrated than the Erlang randomizer. Our optimality result is restricted to kernels of the form \eqref{eq:intro-class}, in which nonnegativity is enforced by the square of a single real polynomial and the Laplace transform has one negative real pole location of prescribed multiplicity. We do not
claim optimality over the full cone of polynomials nonnegative on
$[0,\infty)$, over kernels with several distinct or complex poles, over
signed kernels, or with respect to a global transform norm. The result is
also complementary to the construction in
\cite{battagliolaperalta2026}, which uses powered Fej\'er kernels to obtain
explicit concentrated matrix-exponential laws with split
common-damping complex poles.

The paper is organized as follows.
\Cref{sec:foundations} develops the randomization identities and the
square-polynomial repeated-pole class.
\Cref{sec:variational} reduces the problem to a fixed-center Rayleigh
quotient.
\Cref{sec:laguerre-rigidity} establishes the one-mode Laguerre rigidity
result.
\Cref{sec:adjacency} proves the adjacent-zero criterion, determines the exact
degree, and states the finite-order optimizer.
\Cref{sec:computation} develops the Jacobi spectral construction.
\Cref{sec:asymptotics} proves the sharp large-order asymptotics and the
localization of the deleted zeros.
Finally, \cref{sec:discussion} compares the randomization,
matrix-exponential, and approximation-theoretic interpretations.

\section{Positive randomization and the square-polynomial repeated-pole class}\label{sec:foundations}

This section develops the randomization framework and introduces the admissible kernel class on which the subsequent extremal analysis is built. Specifically, \Cref{sec:positive-kernels} formulates positive randomization as an
operator on test functions, and proves the exact time-domain and
transform-domain identities that make the variance the appropriate optimization
criterion. \Cref{sec:repeated-class} then introduces the
square-polynomial repeated-pole class itself, together with its rational
Laplace transform structure and the precise optimization problem studied
in the remainder of the paper.

\subsection{Positive randomization and exact concentration error}
\label{sec:positive-kernels}

Let $U$ be a nonnegative random variable with density $\kappa$, and define
\begin{equation}\label{rand:eq:operator}
  \Aop_\kappa h(t)
  :=\int_0^\infty h(tu)\kappa(u)\dd u
  =\E[h(tU)].
\end{equation}

The deterministic evaluation operator is recovered in the degenerate case $U=1$ almost surely, and the next result quantifies the error introduced by replacing this degenerate randomizer with a nondegenerate randomizer. 
Unlike a signed approximation formula, thanks to its expectation representation, the operator \eqref{rand:eq:operator} enjoys favorable properties by construction. First, when $h$ is bounded,
$\inf_{t\geq0}h(t)\leq\Aop_\kappa h(t)\leq\sup_{t\geq0}h(t)$. Second, when $h$ is
increasing, $t\mapsto\Aop_\kappa h(t)$ is increasing, and the same holds for
decreasing $h$. Third, when $h\geq0$, so is $\Aop_\kappa h$.

The natural centering condition is $\E[U]=1$. This entails that
\begin{align*}
  \Var(U)=\E[(U-1)^2],
\end{align*}
is also the squared coefficient of variation, which classically is the most immediate measure
of concentration about the deterministic target.

\begin{proposition}
\label{rand:prop:error-bound}

Suppose that $\E[U]=1$ and $\Var(U)<\infty$. Then, for every $t>0$,
\begin{equation*}
 \sup_{\substack{h\in C^2([0,\infty))\\ \lVert h''\rVert_\infty\leq1}}
 \left|\E[h(tU)]-h(t)\right|
 =\frac{t^2}{2}\Var(U).
\end{equation*}
Here $C^2([0,\infty))$ denotes the space of functions on
$[0,\infty)$ with two continuous derivatives, and
$\lVert h''\rVert_\infty=\sup_{x\geq0}|h''(x)|$ is the supremum norm of the
second derivative.
\end{proposition}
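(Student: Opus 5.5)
The plan is to prove the upper bound with a second-order Taylor expansion around the deterministic point $t$, and then to show that the quadratic $h(x)=\tfrac12(x-t)^2$ attains it.

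\emph{Upper bound.} Fix $t>0$ and $h\in C^2([0,\infty))$ with $\lVert h''\rVert_\infty\leq1$. Taylor's theorem with integral remainder gives, for every $x\geq0$,
\[
h(x)=h(t)+h'(t)(x-t)+R(x),\qquad R(x)=\int_t^x (x-y)h''(y)\dd y .
\]
The remainder satisfies $|R(x)|\leq\tfrac12(x-t)^2$. We substitute $x=tU$. Since $\E[U]=1$, we have $\E[tU-t]=0$, so the linear term vanishes after taking expectations. We also need $h(tU)$ to be integrable. This holds because $|h(tU)|\leq |h(t)|+|h'(t)|\,t|U-1|+\tfrac12 t^2(U-1)^2$, and $\Var(U)<\infty$ makes the right-hand side integrable. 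Therefore
\[
|\E[h(tU)]-h(t)|=|\E[R(tU)]|\leq \tfrac12 t^2\,\E[(U-1)^2]=\tfrac{t^2}{2}\Var(U).
\]

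\emph{Attainment.} Take $h_\ast(x)=\tfrac12(x-t)^2$. It lies in $C^2([0,\infty))$ and satisfies $h_\ast''\equiv1$, so $\lVert h_\ast''\rVert_\infty=1$. Since $h_\ast(t)=0$,
\[
\E[h_\ast(tU)]-h_\ast(t)=\tfrac12 t^2\,\E[(U-1)^2]=\tfrac{t^2}{2}\Var(U).
\]
This equals the bound, so the supremum is attained and the equality holds.

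No step here is a real obstacle. The only points needing care are the integrability justification above and noting that the admissible class does not require $h$ to be bounded, which is why the unbounded quadratic $h_\ast$ is allowed.
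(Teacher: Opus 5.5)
Your proof is correct and follows essentially the same route as the paper: a second-order Taylor expansion with integral remainder around $t$, the linear term removed by $\E[U]=1$, and a quadratic test function for sharpness. The only differences are that the paper uses the uncentered extremizer $h_0(x)=x^2/2$, which gives the same value via $\E[U^2]-1=\Var(U)$, and that your explicit integrability check for $h(tU)$ spells out a point the paper leaves implicit.
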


\begin{proof}

Taylor's formula with integral remainder gives, for every admissible $h$,
\begin{align*}
 h(tu)
 &=h(t)+t h'(t)(u-1)\\
 &\quad+t^2(u-1)^2
 \int_0^1(1-r)h''\left(t+rt(u-1)\right)\dd r.
\end{align*}
After taking expectations, the linear term vanishes and the remainder is
bounded in absolute value by $t^2\Var(U)/2$. Equality is attained by
$h_0(t)=t^2/2$, for which $\lVert h_0''\rVert_\infty=1$ and
\[
 \E[h_0(tU)]-h_0(t)=\frac{t^2}{2}\bigl(\E[U^2]-1\bigr)
 =\frac{t^2}{2}\Var(U).
\]

\end{proof}

The variance also has a direct local transform-domain meaning. The Laplace
transform of the randomizer is
\begin{align*}
  K(s)=\E[\e^{-sU}]
      =\int_0^\infty \e^{-su}\kappa(u)\dd u.
\end{align*}
Since the degenerate randomizer $U=1$ has Laplace transform $\e^{-s}$, we compare $K(s)$ with $\e^{-s}$ locally around $s=0$. In particular, if $\E[U]=1$ and $\E[U^2]<\infty$, then
\begin{align*}
  \mathopen{{\lvert}}K(s)-\e^{-s}\mathclose{{\rvert}}
  =\frac{\Var(U)}2s^2+o(s^2),
  \qquad s\downarrow0.
\end{align*}
This follows from the second-order Taylor expansions of $K(s)$ and $\e^{-s}$
at the origin.
Hence, minimizing variance minimizes
the first unmatched Taylor coefficient after mass and mean matching. 

The mean-one Erlang randomizer provides the classical benchmark. Its phase-type representation as a sum of exponential stages makes the randomized horizon accessible through finite-state Markovian recursions, as exploited in Canadization for option pricing and in Erlangization for finite-horizon ruin probabilities \cite{carr1998,asmussenavramusabel2002}, but its variance at order $N$ is only $1/N$. The class studied here retains the positive rational-transform structure and the single repeated real pole while allowing matrix-exponential laws that need not be phase-type. This broader class is relevant in state-space-sensitive constructions such as ME-fication, where a more concentrated randomizer at the same matrix order can reduce the dimension of the auxiliary systems used to recover transient and first-passage quantities \cite{akargursoyhorvathtelek2021}. Such matrix-exponential randomizers do not, however, automatically inherit every probabilistic recursion available for Erlang randomizers.

The randomization operator introduced above also gives an integral interpretation of the Abate--Whitt inversion framework, which we briefly recall before relating it to the repeated-pole setting. For a Laplace transform $\widehat h(s)=\int_0^\infty \e^{-st}h(t)\dd t$,
the Abate--Whitt framework approximates $h(T)$ by a finite linear combination of values of $\widehat h$ at the scaled points $\beta_k/T$, where the complex node parameters $\beta_k$ and weights $\eta_k$ depend on the approximation order but not on $\widehat h$ or $T$ \cite{abatewhitt2006}. More precisely, if an approximate-identity kernel has the exponential representation $f_n(u)=\operatorname{Re}\mathopen{{\bigl(}}\sum_{k=1}^n \eta_k\e^{-\beta_k u}\mathclose{{\bigr)}}$,
then a change of variables gives
\begin{align*}
 \int_0^\infty h(Tu)f_n(u)\dd u
 =\frac1T\operatorname{Re}\mathopen{{\biggl(}}\sum_{k=1}^n
   \eta_k\widehat h\!\left(\frac{\beta_k}{T}\right)\mathclose{{\biggr)}}.
\end{align*}
This is the corresponding Abate--Whitt inversion formula. Whenever $f_n$ is nonnegative and has unit mass, this evaluation is a positive randomization, as it preserves the range and monotonicity of $h$ and produces neither positive nor negative overshoot. The CME method lies in this nonnegative part of the Abate--Whitt framework and selects kernels of this form that are highly concentrated around one \cite{horvathtalyigastelek2018,horvathetal2020nilt}.

The repeated-pole family studied here provides a repeated-node counterpart of the
Abate--Whitt representation. Whereas a sum of exponential terms leads to
transform evaluations at distinct scaled nodes, the polynomial modulation in
\eqref{eq:intro-class} produces derivative evaluations at a single point.
Whenever $h$ is regular enough that $\widehat h$ can be
differentiated under the Laplace integral through order $2m$ at
$s=\lambda/T$, a mild condition satisfied for instance by every $h$ of at
most polynomial growth on $[0,\infty)$, writing $a(z)^2=\sum_{j=0}^{2m}c_jz^j$
and differentiating under the Laplace integral gives
\begin{align*}
\Aop_{\kappa_{\lambda,a}}h(T)
=\frac{\lambda}
{T\displaystyle\int_0^\infty \e^{-z}a(z)^2\dd z}
\sum_{j=0}^{2m}c_j
\left(-\frac{\lambda}{T}\right)^j
\widehat h^{(j)}\left(\frac{\lambda}{T}\right).
\end{align*}
Hence, a pole of multiplicity $N=2m+1$ corresponds to evaluating
$\widehat h$ and its first $2m$ derivatives at the single point
$\lambda/T$, the repeated-node analogue of evaluating $\widehat h$ at
several distinct nodes. From this viewpoint, the present extremal problem selects the
nonnegative mean-one kernel in this repeated-pole class with minimum
variance.

\subsection{The square-polynomial repeated-pole class}\label{sec:repeated-class}

We now construct the polynomial-square family used in the definition of the kernel \eqref{eq:intro-class} precisely. Fix $m\ge0$ and let
\[
    \Pcal_m
    =\left\{a(z)=\sum_{r=0}^{m}a_rz^r:a_r\in\R\right\}.
\]
For $0\ne a\in\Pcal_m$ and $k=0,1,2$, define
\begin{equation}\label{aw:eq:Qk}
    Q_k(a)=\int_0^\infty z^k\e^{-z}a(z)^2\dd z.
\end{equation}
Then, for a damping rate $\lambda>0$, set
\begin{equation}\label{aw:eq:kappa-lambda-a}
    \kappa_{\lambda,a}(u)
    =\frac{\lambda\e^{-\lambda u}a(\lambda u)^2}{Q_0(a)},
    \qquad u\ge0.
\end{equation}
The denominator $Q_0(a)$, together with the change of variables $z=\lambda u$, ensures that kernel \eqref{aw:eq:kappa-lambda-a} integrates to one, and nonnegativity is ensured because the polynomial factor is squared.
The overall scale of $a$ is irrelevant, since $\kappa_{\lambda,ca}=\kappa_{\lambda,a}$
for every $c\ne0$, since the resulting factor cancels between the numerator and denominator. Thus, the shape of the kernel depends only on the relative coefficients of $a$. In other words, the relevant parameter is the projective direction of the coefficient vector of $a$, and one coefficient normalization may be imposed without loss of generality.

The rationality of the transform, and the exact multiplicity of its pole,
follow from the same square-polynomial structure. Writing $a(z)^2=\sum_{r=0}^{2m}d_rz^r$,
the Laplace transform of \eqref{aw:eq:kappa-lambda-a} is
\begin{align}
    K_{\lambda,a}(s)
    &=\int_0^\infty\e^{-su}\kappa_{\lambda,a}(u)\dd u\notag\\
    &=\frac1{Q_0(a)}
      \int_0^\infty
      \exp\!\left[-\left(1+\frac{s}{\lambda}\right)z\right]
      a(z)^2\dd z\notag\\
    &=\frac1{Q_0(a)}
      \sum_{r=0}^{2m}d_rr!
      \left(1+\frac{s}{\lambda}\right)^{-(r+1)}\notag\\
    &=\frac1{Q_0(a)}
      \sum_{r=0}^{2m}d_rr!
      \left(\frac{\lambda}{s+\lambda}\right)^{r+1}.
      \label{aw:eq:K-rational}
\end{align}
Therefore, $K_{\lambda,a}$ is rational and has only one possible pole at
$s=-\lambda$. If $\deg a=m$, then $d_{2m}>0$, and the pole has exact
multiplicity $N=2m+1$. This denominator degree is the order used in the rational and
repeated-pole statements below.

\begin{remark}
\label{aw:rem:confluent}
The single damping factor $\e^{-\lambda u}$ determines the unique pole
location $-\lambda$. The polynomial degree determines the multiplicity: since
$a^2$ has degree $2m$, the largest denominator power in
\eqref{aw:eq:K-rational} is $(s+\lambda)^{2m+1}$. If
\[
 g_{r+1,\lambda}(u)
 =\frac{\lambda^{r+1}u^r\e^{-\lambda u}}{r!},
 \qquad r\geq0,
\]
denotes the Erlang density of order $r+1$ and rate $\lambda$, then
\begin{equation*}
 \kappa_{\lambda,a}(u)
 =\sum_{r=0}^{2m}w_r g_{r+1,\lambda}(u),
 \qquad
 w_r=\frac{d_r r!}{Q_0(a)}.
\end{equation*}
The weights sum to one but may have either sign. Nonnegativity of the total
density follows from the square $a(\lambda u)^2$. Any calculation linear in
the randomizing law can therefore be performed for each Erlang component and
recombined with the signed weights.
\end{remark}

Formula \eqref{aw:eq:K-rational} also explains the matrix-exponential
terminology. It is a proper real rational Laplace transform with all poles
in the open left half-plane, and hence admits a finite-dimensional real
matrix-exponential realization. Because all poles coalesce at the one real
location $-\lambda$, the present class is the repeated-real-pole
square-polynomial subclass of the full CME-R problem discussed in the
introduction, and is strictly larger than the real-rooted repeated-pole
parametrization used in the numerical study of \cite{meszarostelek2022}.
The restriction to one repeated pole and one polynomial square, however,
remains part of every optimality statement below.

Finally, with the class of kernels and its rational structure in hand, we can now
state the optimization problem precisely. If $T_a$ has density $\kappa_{1,a}$
and $U_{\lambda,a}=T_a/\lambda$, then
\begin{align}
\label{num:eq:mean}
    \E[U_{\lambda,a}]
    &=\frac{Q_1(a)}{\lambda Q_0(a)},\\
 \label{num:eq:second}
    \E[U_{\lambda,a}^2]
    &=\frac{Q_2(a)}{\lambda^2Q_0(a)},
\end{align}
and there is a unique scale that centers the kernel at one, namely
\begin{align*}
    \lambda(a)=\frac{Q_1(a)}{Q_0(a)}.
\end{align*}
Moreover, the squared coefficient of variation of $T_a$ is
\begin{equation}
\label{eq:scv}
  \SCV(T_a):=\frac{\Var(T_a)}{\E[T_a]^2}\mathpunct{{.}}  
\end{equation}
Notice that, since $T_a$ is a nondegenerate absolutely continuous random variable, $\Var(T_a)>0$ and hence $\SCV(T_a)>0$. This allows us to write the target function to optimize 
\begin{equation}
\label{eq:obj_scv}
    \Vcal(a)
    :=\Var(U_{\lambda(a),a})
    =\frac{Q_0(a)Q_2(a)}{Q_1(a)^2}-1
    =\SCV(T_a).
\end{equation}
The least-variable repeated-pole randomization problem is therefore the
following.

\begin{definition}[Repeated-pole randomization problem]
\label{aw:def}
For each $m\geq0$, define the optimal variance among the mean-one
randomizers of polynomial degree $m$ by
\begin{align*}
\mathfrak v_m
:=
\inf_{\substack{a\in\Pcal_m\ \deg a=m}}
\Vcal(a).
\end{align*}
\end{definition}
That is, $\mathfrak v_m$ is the infimum of the variances over
mean-one kernels $\kappa_{\lambda(a),a}$ whose polynomial factor has exact
degree $m$. Equivalently, its Laplace transform has one real pole of exact
multiplicity $2m+1$.
At the endpoint $m=0$, the polynomial $a$ is constant, the mean-one
normalization gives $\lambda=1$, and the resulting kernel is the unit-rate
exponential density. Hence 
\begin{equation}
\label{eq:case_m0}
    \mathfrak v_0=1=\Delta_2,
\end{equation}
according to the notation introduced in \eqref{eq:deltas_intro}.
The structural arguments below are written for $m\geq1$, while the main
theorem includes $m=0$ through this direct calculation.

\section{Variational characterization}\label{sec:variational}
In this section and through the degree-selection theorem
(\Cref{adj:thm:final}), assume $m\ge1$.
The endpoint $m=0$ corresponds to the exponential case, and admits a single
squared coefficient of variation equal to $1$ as shown in \eqref{eq:case_m0}, so it is trivial.

For a nonzero polynomial $a\in\Pcal_m$, the quantities $Q_k(a)$ from
\eqref{aw:eq:Qk} are finite and strictly positive, and the associated
unit-damping density is $\kappa_{1,a}$. As in the previous section, we denote by $T_a$ a random variable with this density.

We first consider the relaxed problem obtained by replacing the exact-degree
condition $\deg a=m$ with $\deg a\leq m$. After identifying nonzero scalar
multiples, the exact-degree class is an open subset of the projective
parameter space because its leading coefficient may tend to zero along a
convergent sequence, and its closure consists of all nonzero polynomials of
degree at most $m$. This closure is compact, so the continuous objective
attains its minimum there. We therefore begin with
\begin{equation*}
\inf_{0\ne a\in\Pcal_m}\Vcal(a),
\end{equation*}
where $\Vcal(a):=\SCV(T_a)$ is defined in \eqref{eq:scv}. We later determine whether the minimizer has degree $m-1$ or $m$ and show, through a strict comparison of these two outcomes, that every minimizer has exact degree $m$.

The polynomial $a$ itself
is a half-polynomial of the density, defined only up to a nonzero scalar,
so before existence can be addressed we must first record how the moment
functionals behave under this rescaling.
For $a\ne0$ and $c\ne0$, substituting $ca$ into
\eqref{aw:eq:Qk} and \eqref{aw:eq:kappa-lambda-a} with fixed $\lambda=1$ gives
\begin{equation*}
 Q_k(ca)=c^2Q_k(a), \qquad \kappa_{1,ca}=\kappa_{1,a},
 \quad k=0,1,2.
\end{equation*}
Every moment objective therefore depends on $a$ only through the
one-dimensional subspace it spans, or equivalently through its projective
class, and is unchanged when $a$ is replaced by any nonzero scalar multiple,
so the objective 
descends directly to the real
projective space $\mathbb{RP}^m$ of nonzero coefficient vectors modulo
nonzero scalar multiplication. This space is compact, and no normalization
of a representative is required.
If $a(t)=\sum_{j=0}^m a_jt^j$, then
\begin{equation*}
Q_k(a)=\sum_{i=0}^m\sum_{j=0}^m(i+j+k)!{\,}a_i a_j,
\qquad k=0,1,2.
\end{equation*}
Hence, each $Q_k$ is continuous in the coefficients of $a$. Although the
individual functionals $Q_k$ depend on the chosen representative of the
projective class, their scale factors cancel in the moment ratios defining
$\Vcal$. Consequently, $\Vcal$ defines a continuous function on the
compact projective parameter space.
It follows that $\Vcal$ attains its minimum over the degree-at-most-$m$ class. This does not yet ensure that a minimizer has exact degree $m$, and lower-degree minimizers are ruled out later by a separate comparison argument.

Existence settled, we turn to a reformulation of the objective as a relative quadratic loss, which is the form the variational argument actually needs. 
 As in \eqref{eq:obj_scv}, we know that the function to be minimized corresponds to $\SCV(T_a)=\frac{Q_0(a)Q_2(a)}{Q_1(a)^2}-1$.
For the variational calculation, it is convenient to use
\begin{align*}
    \rho(a)
    =\frac{\Var_a(T_a)}{\E_a[T_a^2]}
    =1-\frac{Q_1(a)^2}{Q_0(a)Q_2(a)}.
\end{align*}
The relation between \(\rho\) and the squared coefficient of variation is
\begin{align*}
    \rho(a)=\frac{\SCV(T_a)}{1+\SCV(T_a)}\in (0,1),
    \qquad
    \SCV(T_a)=\frac{\rho(a)}{1-\rho(a)} \in (0,\infty).
\end{align*}
The transformation between $\SCV(T_a)$ and $\rho(a)$ maps $(0,\infty)$ strictly increasingly into $(0,1)$. Therefore,
minimizing \(\rho\) is equivalent to minimizing \(\SCV\).  Let
\begin{align*}
    \rho_*=\min_{0\ne a\in\Pcal_m}\rho(a),
\end{align*}
and choose a minimizing representative \(a_*\in\Pcal_m\setminus\{0\}\), which
by scale invariance is determined only up to multiplication by a nonzero
constant. The transformed objective \(\rho(a)\) admits a useful
representation as an optimized relative quadratic loss. In particular, it introduces
an auxiliary center playing the role of the mean-one damping rate once the
polynomial is fixed.

\begin{proposition}\label{var:prop:relative-loss}
Let \(T\) be a positive random variable with \(0<\E[T]<\infty\) and \(\E[T^2]<\infty\). Then
\begin{equation*}
    \frac{\Var(T)}{\E[T^2]}
    =\min_{\mu>0}\frac{\E[(T-\mu)^2]}{\mu^2}.
\end{equation*}
The minimizer is unique and equals 
\begin{align*}
\frac{\E[T^2]}{\E[T]}.
\end{align*}
\end{proposition}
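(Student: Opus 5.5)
The plan is to treat the right-hand side as a one-variable minimization in $\mu$, solve it explicitly, and then identify the minimum value with $\Var(T)/\E[T^2]$. Write $M_1=\E[T]$ and $M_2=\E[T^2]$. Both are finite, and both are strictly positive: $T$ is positive, so $M_2>0$, and $M_1>0$ by hypothesis. For $\mu>0$, expanding the square gives
\begin{equation*}
 f(\mu):=\frac{\E[(T-\mu)^2]}{\mu^2}=\frac{M_2}{\mu^2}-\frac{2M_1}{\mu}+1 .
\end{equation*}
The natural move is the substitution $x=1/\mu$, which maps $(0,\infty)$ bijectively onto $(0,\infty)$. This turns the objective into the quadratic
\begin{equation*}
 g(x)=M_2x^2-2M_1x+1 ,
\end{equation*}
whose leading coefficient $M_2$ is positive.

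Next I would complete the square:
\begin{equation*}
 g(x)=M_2\Bigl(x-\frac{M_1}{M_2}\Bigr)^2+1-\frac{M_1^2}{M_2}.
\end{equation*}
From this form, $g$ has a unique global minimizer on $\R$, namely $x_*=M_1/M_2$. This point is strictly positive, so it lies in the admissible region $(0,\infty)$. It is therefore also the unique minimizer of $g$ over $(0,\infty)$, and the minimum is a genuine minimum rather than merely an infimum.

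Translating back, the unique minimizing center is $\mu_*=1/x_*=\E[T^2]/\E[T]$, as claimed. The minimum value is
\begin{equation*}
 1-\frac{M_1^2}{M_2}=\frac{M_2-M_1^2}{M_2}=\frac{\Var(T)}{\E[T^2]} .
\end{equation*}

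I expect no real obstacle. The only points needing care are two. First, $M_2>0$, which guarantees that $g$ is strictly convex, so the minimizer is unique. Second, $x_*$ lies inside the open domain $(0,\infty)$, which is where positivity of $\E[T]$ is used, so that the minimum over $\mu>0$ is actually attained. An alternative route, used as a cross-check, is to set $f'(\mu)=0$ directly, giving $-2M_2\mu^{-3}+2M_1\mu^{-2}=0$ and hence $\mu=M_2/M_1$. Combining this with $f(\mu)\to+\infty$ as $\mu\downarrow0$ and $f(\mu)\to1$ as $\mu\to\infty$, together with $1-M_1^2/M_2<1$, confirms that the critical point is the global minimizer.
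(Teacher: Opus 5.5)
Your proof is correct and essentially the paper's argument: both expand the objective to $M_2/\mu^2-2M_1/\mu+1$ and minimize this one-variable function. The paper reads off the unique global minimizer from the sign of $G'(\mu)=2(M_1\mu-M_2)/\mu^3$, which is exactly your cross-check. Your completing the square in $x=1/\mu$ gives uniqueness and the minimum value $1-M_1^2/M_2$ in one step.
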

\begin{proof}
Write \(m_1=\E[T]\) and \(m_2=\E[T^2]\). For \(\mu>0\), expanding the square gives
\begin{equation}\label{var:eq:Fmu-expansion}
    G(\mu):=\frac{\E[(T-\mu)^2]}{\mu^2}
    =\frac{m_2}{\mu^2}-\frac{2m_1}{\mu}+1,
\end{equation}
so
\[
    G'(\mu)=\frac{2(m_1\mu-m_2)}{\mu^3},
\]
which is negative for \(0<\mu<m_2/m_1\) and positive beyond, giving the unique global minimizer at \(m_2/m_1\). Substituting back into \eqref{var:eq:Fmu-expansion},
\[
    G\left(\frac{m_2}{m_1}\right)
    =1-\frac{m_1^2}{m_2}
    =\frac{m_2-m_1^2}{m_2}
    =\frac{\Var(T)}{\E[T^2]},
\]
which proves both assertions.
\end{proof}

Applying Proposition~\ref{var:prop:relative-loss} to \(T_a\) gives
\begin{align*}
    \mu(a)
    =\frac{\E_a[T_a^2]}{\E_a[T_a]}
    =\frac{Q_2(a)}{Q_1(a)},
\end{align*}
where $\mu(a)$ denotes the unique minimizer identified in Proposition~\ref{var:prop:relative-loss}, now viewed as a function of $a$. We write $\mu_*$ in place of $\mu(a_*)$ once the optimal polynomial $a_*$ is fixed, and more generally drop the argument whenever the polynomial in question is clear from context, but the dependence on $a$ remains implicit throughout.
To derive a first-order optimality condition in the polynomial coefficients, we introduce the center as an auxiliary variable and consider the joint quotient
\begin{equation}\label{var:eq:J-def}
\mathcal J(a,\mu) =
\frac{
\displaystyle\int_0^\infty (t-\mu)^2a(t)^2e^{-t},dt
}{
\displaystyle\mu^2\int_0^\infty a(t)^2e^{-t},dt
},
\qquad
0\ne a\in\Pcal_m,\quad \mu>0.
\end{equation}
For each fixed $a$, minimizing over the center recovers the original objective,
$$
\rho(a)=\min_{\mu>0}\mathcal J(a,\mu),
\qquad
\mu(a)=\frac{Q_2(a)}{Q_1(a)}.
$$
Consequently,
$$
\rho_*
=
\min_{\substack{0\ne a\in\Pcal_m\ \mu>0}}
\mathcal J(a,\mu),
$$
and if $a_*$ minimizes $\rho$, then
$$
\mu_*=\frac{Q_2(a_*)}{Q_1(a_*)},
$$
makes $(a_*,\mu_*)$ a joint minimizer of $\mathcal J$. We now hold this optimal center $\mu_*$ fixed and vary only the polynomial. Joint optimality then implies that $a_*$ minimizes
$$
\Lambda_{\mu_*}(a)
=
\frac{
\displaystyle\int_0^\infty (t-\mu_*)^2a(t)^2e^{-t},dt
}{
\displaystyle\int_0^\infty a(t)^2e^{-t},dt
},
\qquad
0\ne a\in\Pcal_m,
$$
because
$$
\mathcal J(a,\mu_*)=\frac{\Lambda_{\mu_*}(a)}{\mu_*^2}.
$$
The first-order optimality condition for this Rayleigh quotient yields
the Euler equation below. Although $\mu_*$
 is not yet known explicitly, it is treated as fixed when taking variations in the polynomial $a$.

\begin{theorem}[First variation in polynomial directions]\label{var:thm:first-variation}
Let \((a_*,\mu_*)\) be a joint minimizer of \(\mathcal{J}\) in \eqref{var:eq:J-def}, and let \(\rho_*=\mathcal{J}(a_*,\mu_*)\). Define
\begin{equation}\label{var:eq:eta-def}
    \eta_*
    :=\rho_*\mu_*^2
    =\min_{0\ne a\in\Pcal_m}\Lambda_{\mu_*}(a),
\end{equation}
with $a_*$ realizing this minimum, and set
\begin{equation}\label{var:eq:R-def}
    R_*(t)=(t-\mu_*)^2-\eta_*.
\end{equation}
Then
\begin{equation}\label{var:eq:Euler-integral}
    \int_0^\infty R_*(t)a_*(t)b(t)\e^{-t}\,dt=0
    \qquad
    \text{for every }b\in\Pcal_m.
\end{equation}
\end{theorem}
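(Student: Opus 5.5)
The plan is to read \eqref{var:eq:Euler-integral} as the stationarity condition of the Rayleigh quotient $\Lambda_{\mu_*}$ at its minimizer $a_*$. First I will make two preliminary identifications. Since $(a_*,\mu_*)$ jointly minimizes $\mathcal J$, for every $0\ne a\in\Pcal_m$ we have $\Lambda_{\mu_*}(a)=\mu_*^2\mathcal J(a,\mu_*)\ge\mu_*^2\rho_*=\Lambda_{\mu_*}(a_*)$. This justifies both equalities in \eqref{var:eq:eta-def}: $a_*$ minimizes $\Lambda_{\mu_*}$ over $\Pcal_m\setminus\{0\}$ and the minimum value is $\eta_*=\rho_*\mu_*^2$.

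Next, introduce the symmetric bilinear forms on $\Pcal_m$
\[
A(a,b)=\int_0^\infty (t-\mu_*)^2a(t)b(t)\e^{-t}\dd t,\qquad
B(a,b)=\int_0^\infty a(t)b(t)\e^{-t}\dd t.
\]
Both integrals are finite because $\e^{-t}$ has all moments. $B$ is positive definite, since a nonzero polynomial cannot vanish almost everywhere on $(0,\infty)$. Then $\Lambda_{\mu_*}(a)=A(a,a)/B(a,a)$.

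Fix $b\in\Pcal_m$ and consider $\varepsilon\mapsto a_*+\varepsilon b$. For $|\varepsilon|$ small this polynomial is nonzero, so it lies in $\Pcal_m\setminus\{0\}$; this uses only that $\Pcal_m$ is a linear space and $a_*\ne0$. The function
\[
\phi(\varepsilon)=A(a_*+\varepsilon b,a_*+\varepsilon b)-\eta_*\,B(a_*+\varepsilon b,a_*+\varepsilon b)
\]
is a real quadratic polynomial in $\varepsilon$. By minimality of $a_*$ it satisfies $\phi(\varepsilon)\ge0$, and $\phi(0)=0$ because $A(a_*,a_*)=\eta_*B(a_*,a_*)$. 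Multiplying through by $B>0$ avoids differentiating the quotient.

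Since $\phi$ has a minimum at the interior point $\varepsilon=0$, we get $\phi'(0)=0$. Expanding,
\[
\phi'(0)=2\bigl[A(a_*,b)-\eta_*B(a_*,b)\bigr]=2\int_0^\infty\bigl((t-\mu_*)^2-\eta_*\bigr)a_*(t)b(t)\e^{-t}\dd t,
\]
which is exactly \eqref{var:eq:Euler-integral} with $R_*$ as in \eqref{var:eq:R-def}.

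I do not expect a genuine obstacle here. The only points needing care are bookkeeping. One is checking that $\mu_*$ is held fixed, so no derivative in $\mu$ appears; the theorem is stated for fixed $\mu_*$, and joint minimality already gives minimality in $a$ alone. The other is that the exact-degree restriction is irrelevant, since the relaxed problem over all of $\Pcal_m\setminus\{0\}$ is the one being minimized, and so all directions $b\in\Pcal_m$ are admissible.
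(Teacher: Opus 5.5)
Your proposal is correct and follows essentially the paper's argument: perturb along $a_*+\varepsilon b$ with $\mu_*$ held fixed, and use minimality at $\varepsilon=0$ to force the first-order term to vanish. The only difference is cosmetic. You clear the denominator and work with the nonnegative quadratic $\phi(\varepsilon)=A(a_\varepsilon,a_\varepsilon)-\eta_*B(a_\varepsilon,a_\varepsilon)$, which vanishes at $0$, whereas the paper applies the quotient-rule condition $N'(0)D(0)-N(0)D'(0)=0$ and then divides by $D(0)$.
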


\begin{proof}
For \(b\in\Pcal_m\), consider the path \(a_\varepsilon=a_*+\varepsilon b\), admissible since \(a_*\ne0\) keeps \(a_\varepsilon\) nonzero for small \(\varepsilon\). Holding \(\mu=\mu_*\) fixed and writing
\begin{align*}
    N(\varepsilon)
    &=\int_0^\infty
      (t-\mu_*)^2a_\varepsilon(t)^2\e^{-t}\,dt,
      &
    D(\varepsilon)
    &=\mu_*^2\int_0^\infty
      a_\varepsilon(t)^2\e^{-t}\,dt,
\end{align*}
we have \(\mathcal{J}(a_\varepsilon,\mu_*)=N(\varepsilon)/D(\varepsilon)\). Since \((a_*,\mu_*)\) is a joint minimizer, this ratio has a local minimum at \(\varepsilon=0\), so
\begin{equation}\label{var:eq:stationarity-epsilon}
    N'(0)D(0)-N(0)D'(0)=0.
\end{equation}
Expanding \(a_\varepsilon^2=a_*^2+2\varepsilon a_*b+\varepsilon^2b^2\) gives
\begin{align}
    N'(0)
    &=2\int_0^\infty
      (t-\mu_*)^2a_*(t)b(t)\e^{-t}\,dt,
      \label{var:eq:N-prime}\\
    D'(0)
    &=2\mu_*^2\int_0^\infty
      a_*(t)b(t)\e^{-t}\,dt.
      \label{var:eq:D-prime}
\end{align}
Since \(\rho_*=N(0)/D(0)\), \eqref{var:eq:stationarity-epsilon} reduces to \(N'(0)-\rho_*D'(0)=0\). Inserting \eqref{var:eq:N-prime}--\eqref{var:eq:D-prime}, dividing by two, and combining the integrals gives
\begin{align*}
    0
    &=\int_0^\infty
      \left[(t-\mu_*)^2-\rho_*\mu_*^2\right]
      a_*(t)b(t)\e^{-t}\,dt
    =\int_0^\infty
      R_*(t)a_*(t)b(t)\e^{-t}\,dt,
\end{align*}
which proves \eqref{var:eq:Euler-integral}.
\end{proof}

\begin{remark}\label{var:rem:mu-fixed} 
Since $(a_*,\mu_*)$ is a joint minimizer,
stationarity holds along paths that vary $a$ while keeping $\mu=\mu_*$.
Equivalently, after substituting the minimizing center $\mu(a)$, the chain-rule
term involving $D\mu(a_*)$ vanishes because
$\partial_\mu\mathcal J(a_*,\mu_*)=0$. The symbols $\rho_*$, $\mu_*$, and
$\eta_*$ remain written in terms of $a_*$ only because the optimizer is
initially unknown. 
\end{remark}

\begin{remark}
Write the polynomial in the monomial basis as
$$
a(t)=\sum_{i=0}^m a_i t^i,
\qquad
\bm a=(a_0,\ldots,a_m)^\mathsf{T},
$$
where $\bm a$ is the coefficient vector of polynomial $a$. Moreover, define the Hankel moment matrices
$$
H_k=\bigl((i+j+k)!\bigr)_{i,j=0}^m,
\qquad k=0,1,2.
$$
Then
$$
Q_k(a) = \int_0^\infty t^k a(t)^2e^{-t}dt
= \sum_{i,j=0}^m (i+j+k)!a_i a_j
=\bm a^\mathsf{T}H_k\bm a.
$$
Hence the function-space Rayleigh quotient can be written in coefficient form as
$$
\Lambda_\mu(a)
=
\frac{\bm a^\mathsf{T}
\bigl(H_2-2\mu H_1+\mu^2H_0\bigr)\bm a}
{\bm a^\mathsf{T}H_0\bm a}.
$$
Since $H_0$ is positive definite, minimizing this quotient is equivalent to a generalized eigenvalue problem. This is the matrix representation of the same first-order condition obtained above by varying the polynomial directly.
\end{remark}

\begin{proposition}\label{prop:shifted-form}
For the optimizer $a_*$ and center $\mu_*$ defined above,
\begin{equation}\label{eq:shifted-form}
  \int_0^\infty R_*(t)b(t)^2\e^{-t}\dd t\ge0,
  \qquad b\in\Pcal_m.
\end{equation}
Moreover, equality holds at $b=a_*$.
\end{proposition}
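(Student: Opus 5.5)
The plan is to read \eqref{eq:shifted-form} directly off the minimality of the Rayleigh quotient $\Lambda_{\mu_*}$ established before Theorem~\ref{var:thm:first-variation}. By \eqref{var:eq:eta-def}, $\eta_*=\min_{0\ne a\in\Pcal_m}\Lambda_{\mu_*}(a)$, and this minimum is attained at $a_*$. So for every nonzero $b\in\Pcal_m$ we have $\Lambda_{\mu_*}(b)\ge\eta_*$.

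First, multiply this inequality by the strictly positive denominator $\int_0^\infty b(t)^2\e^{-t}\dd t$. This denominator is positive because $b\ne0$ is a polynomial, so $b^2\e^{-t}$ is nonnegative and not almost everywhere zero. The result is
\[
\int_0^\infty (t-\mu_*)^2 b(t)^2\e^{-t}\dd t-\eta_*\int_0^\infty b(t)^2\e^{-t}\dd t\ge0 .
\]
Combining the two integrals gives exactly $\int_0^\infty R_*(t)b(t)^2\e^{-t}\dd t\ge0$, with $R_*$ as in \eqref{var:eq:R-def}. All integrals are finite because the integrands are polynomials times $\e^{-t}$. The case $b=0$ is trivial, since the integral vanishes.

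Next, for the equality at $b=a_*$, use that $\Lambda_{\mu_*}(a_*)=\eta_*$, which holds because $a_*$ realizes the minimum in \eqref{var:eq:eta-def}. Clearing the denominator then gives $\int_0^\infty R_*a_*^2\e^{-t}\dd t=0$. Alternatively, take $b=a_*$ in the Euler relation \eqref{var:eq:Euler-integral} of Theorem~\ref{var:thm:first-variation}, which yields the same identity immediately.

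The only point that needs care is the identification of $\eta_*$ with the minimum of $\Lambda_{\mu_*}$ at the fixed center $\mu_*$. I will justify it from joint minimality as in the text: $\mathcal J(a,\mu_*)=\Lambda_{\mu_*}(a)/\mu_*^2\ge\rho_*=\mathcal J(a_*,\mu_*)$ for every nonzero $a\in\Pcal_m$, so $\Lambda_{\mu_*}(a)\ge\rho_*\mu_*^2=\eta_*$. Beyond this, the argument is just clearing a positive denominator, so I expect no real obstacle.
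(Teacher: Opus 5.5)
Your proposal is correct and follows the paper's proof: the paper also obtains the inequality by clearing the positive denominator in $\Lambda_{\mu_*}(b)\ge\eta_*$, which comes from \eqref{var:eq:eta-def}. Equality at $b=a_*$ holds because $a_*$ attains that minimum. Your extra checks, namely the justification of $\eta_*$ via joint minimality, the case $b=0$, and the positivity of the denominator, only spell out details the paper leaves implicit.
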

\begin{proof}
Immediate from \eqref{var:eq:eta-def}: $\eta_*\le\Lambda_{\mu_*}(b)$ for every nonzero $b\in\Pcal_m$, which rearranges to \eqref{eq:shifted-form}, with equality at $b=a_*$ since $a_*$ realizes the minimum.
\end{proof}
The conclusion of Proposition~\ref{prop:shifted-form} applies to every global optimizer of the  degree-at-most-$m$ problem, regardless of its actual degree. In particular, it remains valid if the optimizer has degree $m-1$ rather than $m$.

Set $q_* = R_* a_*$, and call $\ip{p}{q}=\int_0^\infty p(t)q(t)e^{-t}\,dt$, the weighted inner product with weight \(e^{-t}\), where $p$ and $q$ can be any polynomials.
The Euler equation gives
$$
\ip{q_*}{b} =0
\qquad
\text{for every } b\in\Pcal_m,
$$
so \(q_*\perp\Pcal_m\) in the weighted space with weight \(e^{-t}\). Since \(R_*\) is quadratic and \(a_*\in\Pcal_m\), $q_*\in\Pcal_{m+2}$.
Hence
\begin{equation}
\label{eq:q_space}
q_*\in\Pcal_{m+2}\cap\Pcal_m^\perp,
\qquad
\dim\bigl(\Pcal_{m+2}\cap\Pcal_m^\perp\bigr)=2.
\end{equation}
Hence, coefficient stationarity alone leaves a two-dimensional space of candidates for \(q_*\). Pinning down the missing direction requires further stationarity conditions.

\section{Laguerre reduction and rigidity}\label{sec:laguerre-rigidity}

The next step is to exploit the structure of the two-dimensional space $\Pcal_{m+2}\cap\Pcal_m^\perp$. For the weight $\e^{-t}$ on $(0,\infty)$, the Laguerre basis describes this space naturally, since the coefficient first-order condition forces all lower-degree Laguerre modes to vanish, leaving only two possible modes. The remaining freedom is then resolved by the damping-rate stationarity condition developed in the next step.

The ordinary Laguerre polynomials $L_n(t)=\sum_{j=0}^n(-1)^j\binom{n}{j}t^j/j!$ have exact degree $n$, with leading coefficient $(-1)^n/n!$, and satisfy the orthogonality relation
\begin{equation*}
    \ip{L_j}{L_k}
    =\delta_{jk}, \qquad j,k\mathrel{{\geq}}0,
\end{equation*}
where $\delta_{jk}$ is the Kronecker delta. Since $L_0,\ldots,L_r$ are $r+1$ polynomials of distinct degrees $0,\ldots,r$, they are linearly independent, and hence an orthonormal basis of $\Pcal_r$ for every $r\ge0$.
Then, since $q_*\in\Pcal_{m+2}$, we have a unique expansion
\begin{equation}\label{lag:eq:q-expansion}
    q_*(t)=\sum_{k=0}^{m+2}c_kL_k(t).
\end{equation}
The Euler equation will force the first $m+1$ coefficients to vanish, as we show next.
\begin{theorem}[Laguerre reduction]\label{lag:thm:reduction}
Assume \eqref{eq:q_space}.
Then
\begin{equation}\label{lag:eq:two-modes}
    R_*(t)a_*(t)
    =c_{m+1}L_{m+1}(t)+c_{m+2}L_{m+2}(t),
\end{equation}
for uniquely determined real constants $c_{m+1}$ and $c_{m+2}$.
\end{theorem}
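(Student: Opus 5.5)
The argument uses only the Laguerre expansion \eqref{lag:eq:q-expansion} and the orthogonality $q_*\perp\Pcal_m$ recorded in \eqref{eq:q_space}. Since $R_*$ is quadratic and $a_*\in\Pcal_m$, the product $q_*=R_*a_*$ lies in $\Pcal_{m+2}$. The expansion \eqref{lag:eq:q-expansion} therefore exists. Because $L_0,\ldots,L_{m+2}$ form an orthonormal basis of $\Pcal_{m+2}$ for $\ip{\cdot}{\cdot}$, the expansion is unique, and its coefficients are given explicitly by $c_k=\ip{q_*}{L_k}$.

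Next, fix $0\le k\le m$. Then $L_k\in\Pcal_m$, so taking $b=L_k$ in the Euler equation \eqref{var:eq:Euler-integral} of \cref{var:thm:first-variation} gives $c_k=\ip{R_*a_*}{L_k}=0$. Substituting these vanishing coefficients into \eqref{lag:eq:q-expansion} leaves only the two terms $c_{m+1}L_{m+1}+c_{m+2}L_{m+2}$, which is \eqref{lag:eq:two-modes}.

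Uniqueness of $c_{m+1}$ and $c_{m+2}$ follows either from the explicit formula $c_k=\ip{q_*}{L_k}$ or from the linear independence of $L_{m+1}$ and $L_{m+2}$, which have distinct degrees.

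None of these steps is a genuine obstacle. The one point to check is that the integrals are finite, so that the inner products are defined. This holds because every polynomial is integrable against $\e^{-t}$ on $(0,\infty)$.
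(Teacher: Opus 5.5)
Your proposal is correct and is essentially the paper's own proof: you expand $q_*=R_*a_*$ in the orthonormal basis $L_0,\ldots,L_{m+2}$, use $b=L_k$ for $k\le m$ in the Euler equation \eqref{var:eq:Euler-integral} to get $c_k=\ip{q_*}{L_k}=0$, and obtain uniqueness from linear independence. Your explicit formula $c_k=\ip{q_*}{L_k}$ is simply a slightly more direct way of writing the same orthonormality step.
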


\begin{proof}
For $j=0,\ldots,m$, $L_j\in\Pcal_m$ is an admissible test polynomial in \eqref{var:eq:Euler-integral}, so $0=\ip{q_*}{L_j}$. Substituting \eqref{lag:eq:q-expansion} and using orthonormality gives $0=\sum_{k=0}^{m+2}c_k\ip{L_k}{L_j}=c_j$, hence $c_0=\cdots=c_m=0$. This proves \eqref{lag:eq:two-modes}, with uniqueness following from linear independence of the Laguerre basis.
\end{proof}

The Laguerre reduction still leaves a two-dimensional space of candidates, and removing one of the two modes requires a variational identity of a different kind, obtained by letting the exponential damping rate itself vary. For \(\beta>0\), \(\mu>0\), and \(0\neq a\in\Pcal_m\), define
\begin{equation*}
    \mathcal J_{\beta,\mu}(a)
    =\frac{\displaystyle
      \int_0^\infty (t-\mu)^2 a(t)^2\e^{-\beta t}\dd t}
    {\displaystyle
      \mu^2\int_0^\infty a(t)^2\e^{-\beta t}\dd t}.
\end{equation*}
At \(\beta=1\), minimizing over \(\mu\) gives \(\rho(a)\), and the minimizer is \(Q_2(a)/Q_1(a)\). The parameter \(\beta\) is the exponential damping rate, and it determines the time scale of the reference exponential law. The following lemma shows that changing $\beta$ only rescales the
polynomial and the center.

\begin{lemma}\label{rig:lem:dilation}
Let \(\widetilde a(x)=a(x/\beta)\). Then \(a\mapsto\widetilde a\) is a bijection of \(\Pcal_m\), with inverse \(p(t)\mapsto p(\beta t)\), and
\begin{equation*}
    \mathcal J_{\beta,\mu}(a)
    =\mathcal J_{1,\beta\mu}(\widetilde a).
\end{equation*}
\end{lemma}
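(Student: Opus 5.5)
The plan is a direct change of variables $x=\beta t$ in both integrals defining $\mathcal J_{\beta,\mu}(a)$, followed by a check of how the center transforms. Before that, the bijectivity claim on $\Pcal_m$ is immediate. If $a(t)=\sum_{r=0}^m a_rt^r$, then $\widetilde a(x)=\sum_{r=0}^m a_r\beta^{-r}x^r$ again lies in $\Pcal_m$ and has the same degree. The map $p(t)\mapsto p(\beta t)$ is a two-sided inverse, since composing the two substitutions gives the identity. Both maps are linear, so they preserve nonzero elements. Hence $\mathcal J_{1,\beta\mu}(\widetilde a)$ is defined exactly when $\mathcal J_{\beta,\mu}(a)$ is.

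For the identity, I substitute $t=x/\beta$, so $\dd t=\dd x/\beta$, in the numerator:
\[
\int_0^\infty (t-\mu)^2a(t)^2\e^{-\beta t}\dd t
=\frac1\beta\int_0^\infty\Bigl(\frac{x-\beta\mu}{\beta}\Bigr)^2\widetilde a(x)^2\e^{-x}\dd x
=\frac1{\beta^3}\int_0^\infty (x-\beta\mu)^2\widetilde a(x)^2\e^{-x}\dd x.
\]
The same substitution in the denominator gives
\[
\mu^2\int_0^\infty a(t)^2\e^{-\beta t}\dd t=\frac{\mu^2}{\beta}\int_0^\infty\widetilde a(x)^2\e^{-x}\dd x
=\frac{(\beta\mu)^2}{\beta^3}\int_0^\infty\widetilde a(x)^2\e^{-x}\dd x .
\]
Dividing the two expressions, the common factor $\beta^{-3}$ cancels. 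What remains is exactly the definition of $\mathcal J_{1,\beta\mu}(\widetilde a)$ with center $\beta\mu$. All integrals are finite, because polynomials are integrable against $\e^{-x}$.

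There is no genuine obstacle. The only point needing care is the bookkeeping of powers of $\beta$. One factor comes from $\dd t$, two come from the square $(t-\mu)^2=(x-\beta\mu)^2/\beta^2$, and the $\mu^2$ in the denominator must be rewritten as $(\beta\mu)^2/\beta^2$ so that the normalization matches the center $\beta\mu$.
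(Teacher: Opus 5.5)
Your proposal is correct and follows the same route as the paper: the change of variables $x=\beta t$ gives the common factor $\beta^{-3}$ in the numerator and denominator, and sends $a(t)$ and $\mu$ to $\widetilde a(x)$ and $\beta\mu$. Your explicit coefficient check for the bijection and your power-of-$\beta$ bookkeeping simply spell out what the paper's one-line proof leaves implicit.
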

\begin{proof}
The identity follows directly from the change of variables $x=\beta t$: the
numerator and denominator acquire the same factor $\beta^{-3}$, while $a(t)$
and $\mu$ become $\widetilde a(x)$ and $\beta\mu$, respectively.
\end{proof}

In particular, varying $\beta$ changes only the time scale of the problem,
not its optimal value, which makes $\beta$ available as an additional
variational direction.

\begin{proposition}\label{rig:prop:joint-min}
The optimal value \(\rho_*\) satisfies
\begin{align*}
    \rho_*
    =\min_{\substack{\beta>0,\ \mu>0\\0\neq a\in\Pcal_m}}
      \mathcal J_{\beta,\mu}(a).
\end{align*}
Moreover, the triple \((1,\mu_*,a_*)\) is a global minimizer of the right-hand side.
\end{proposition}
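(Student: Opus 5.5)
The plan is to reduce the three-parameter minimization to the two-parameter one already solved at $\beta=1$, using the dilation invariance of Lemma~\ref{rig:lem:dilation}.

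First, the inequality $\rho_*\le \inf_{\beta,\mu,a}\mathcal J_{\beta,\mu}(a)$. Fix arbitrary $\beta>0$, $\mu>0$ and $0\ne a\in\Pcal_m$, and set $\widetilde a(x)=a(x/\beta)$. By Lemma~\ref{rig:lem:dilation}, $\widetilde a$ is again a nonzero element of $\Pcal_m$, since the map is a bijection of $\Pcal_m$ and sends $0$ only to $0$. The lemma also gives $\mathcal J_{\beta,\mu}(a)=\mathcal J_{1,\beta\mu}(\widetilde a)$. Since $\mathcal J_{1,\cdot}$ coincides with the joint quotient $\mathcal J$ of \eqref{var:eq:J-def}, and $\beta\mu>0$, we obtain
\[
\mathcal J_{\beta,\mu}(a)=\mathcal J(\widetilde a,\beta\mu)\ge \min_{\mu'>0}\mathcal J(\widetilde a,\mu')=\rho(\widetilde a)\ge\rho_*.
\]
Here the middle equality is the identity $\rho(a)=\min_{\mu>0}\mathcal J(a,\mu)$, which comes from Proposition~\ref{var:prop:relative-loss} applied to $T_{\widetilde a}$.

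Second, the reverse inequality and attainment. At $\beta=1$ we have $\mathcal J_{1,\mu_*}(a_*)=\mathcal J(a_*,\mu_*)=\rho(a_*)=\rho_*$, because $\mu_*=Q_2(a_*)/Q_1(a_*)$ is the minimizing center. The infimum over the larger parameter set is therefore at most $\rho_*$. Combined with the first step, the infimum equals $\rho_*$, it is a minimum, and the triple $(1,\mu_*,a_*)$ attains it.

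No step here is a real obstacle. The only care needed is bookkeeping: confirm that the dilation preserves $\Pcal_m\setminus\{0\}$ and positivity of the center, and that $\mathcal J_{1,\mu}$ is literally the quotient in \eqref{var:eq:J-def}. This is precisely what makes $\beta$ a legitimate variational direction at the optimizer, which the paper uses afterwards.
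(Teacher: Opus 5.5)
Your proposal is correct and matches the paper's proof: both use the dilation identity of Lemma~\ref{rig:lem:dilation} to get $\mathcal J_{\beta,\mu}(a)=\mathcal J_{1,\beta\mu}(\widetilde a)\ge\rho(\widetilde a)\ge\rho_*$, and then note that $(1,\mu_*,a_*)$ attains $\rho_*$. Your extra bookkeeping, checking that the dilation preserves $\Pcal_m\setminus\{0\}$ and that $\beta\mu>0$, is a harmless elaboration of the paper's one-line argument.
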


\begin{proof}
By Lemma~\ref{rig:lem:dilation}, \(\mathcal J_{\beta,\mu}(a)=\mathcal J_{1,\beta\mu}(\widetilde a)\geq\rho(\widetilde a)\geq\rho_*\) for every admissible triple, while \(\mathcal J_{1,\mu_*}(a_*)=\rho(a_*)=\rho_*\). Equality at \((1,\mu_*,a_*)\) proves both assertions.
\end{proof}

The dilation identity concerns a simultaneous reparametrization of $\beta$,
the center, and the polynomial. It does not make the one-variable map $\beta{\mapsto}\mathcal J_{\beta,\mu_*}(a_*)$
constant when $a_*$ and $\mu_*$ are held fixed. Proposition~\ref{rig:prop:joint-min}
states that $(1,\mu_*,a_*)$ is a global minimizer of the enlarged
three-variable problem. Consequently, the partial derivative in the damping
direction exists and satisfies
\[
  \left.\partial_\beta\mathcal J_{\beta,\mu_*}(a_*)\right|_{\beta=1}=0,
\]
which supplies
an additional stationarity equation.

In particular, such derivative is computed by an argument identical in structure to the proof of Theorem~\ref{var:thm:first-variation}, now varying \(\beta\) in place of the polynomial direction, so we only sketch it. Differentiating the numerator and denominator of \(\mathcal J_{\beta,\mu_*}(a_*)\) under the integral sign is justified directly: for \(\beta\) in a fixed neighborhood of $1$, the differentiated integrands are bounded in absolute value by $C(1+t^{2m+3})\e^{-ct}$ for some constants $c,C>0$, which is integrable. Thus differentiation under the integral sign gives
\begin{align*}
    \partial_\beta\left[\int_0^\infty (t-\mu_*)^2a_*(t)^2\e^{-\beta t}\dd t\right]
    &=-\int_0^\infty t(t-\mu_*)^2a_*(t)^2\e^{-\beta t}\dd t,
      \\
    \partial_\beta\left[\mu_*^2\int_0^\infty a_*(t)^2\e^{-\beta t}\dd t\right]
    &=-\mu_*^2\int_0^\infty t\,a_*(t)^2\e^{-\beta t}\dd t.
\end{align*}
By Proposition~\ref{rig:prop:joint-min}, \(\beta=1\) minimizes \(\mathcal J_{\beta,\mu_*}(a_*)\), so the same quotient-rule computation as in that proof, evaluated at \(\beta=1\) with \(\mathcal J_{1,\mu_*}(a_*)=\rho_*\), gives
\begin{align*}
    0
    &=-\int_0^\infty
      t\left[(t-\mu_*)^2-\rho_*\mu_*^2\right]
      a_*(t)^2\e^{-t}\dd t.
\end{align*}
Using $\eta_*=\rho_*\mu_*^2$ and the definition of $R_*$, the vanishing derivative in $\beta$ gives, explicitly,
\begin{equation}\label{rig:eq:scale-stationarity}
    \int_0^\infty tR_*(t)a_*(t)^2\e^{-t}\dd t=0.
\end{equation}
Equivalently,
\begin{equation}\label{rig:eq:scale-inner-product}
    \ip{R_*a_*}{t a_*}=0.
\end{equation}

\begin{remark}\label{rig:rem:missing-direction}
The identity \eqref{rig:eq:scale-inner-product} has the same form as the polynomial Euler condition with $b=t a_*$. If $\deg a_*=m$, however, $t a_*\in\Pcal_{m+1}\setminus\Pcal_m$, so this direction is not covered by \eqref{var:eq:Euler-integral}. The damping-rate variation therefore supplies one additional orthogonality condition beyond those obtained from polynomial variations.
\end{remark}

In the full-degree case, this additional condition eliminates the remaining
$\Lag_{m+1}$ component. Indeed, if $\deg a_*=m$, then $t a_*$ has exact
degree $m+1$, and orthogonality of $\Lag_{m+1}$ to $\Pcal_m$ gives
$$
\ip{t a_*}{\Lag_{m+1}}
=
\frac{\lc(a_*)}{\lc(\Lag_{m+1})}
\neq 0.
$$
Here $\lc(p)$ denotes the leading coefficient of a nonzero polynomial $p$.
These observations lead to the following rigidity result, including the
possible degree-$(m-1)$ boundary case.

\begin{theorem}[One-mode Laguerre rigidity]\label{rig:thm:rigidity}
Let \(a_*\in\Pcal_m\setminus\{0\}\) be a real optimizer, and let \(R_*\) be defined by \eqref{var:eq:R-def}. Assume the coefficient stationarity condition \eqref{var:eq:Euler-integral} and the scale-stationarity condition \eqref{rig:eq:scale-stationarity}. Then $\deg a_*\in\{m-1,m\}$.
More precisely, exactly one of the following alternatives holds:
\begin{align}
    \deg a_*&=m-1,
    &R_*a_*&=c\Lag_{m+1},\label{rig:eq:lower-branch}\\
    \deg a_*&=m,
    &R_*a_*&=c\Lag_{m+2},\label{rig:eq:full-branch}
\end{align}
for some nonzero real constant \(c\).
\end{theorem}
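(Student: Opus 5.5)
We would start from the Laguerre reduction (Theorem~\ref{lag:thm:reduction}), which gives $R_*a_*=c_{m+1}\Lag_{m+1}+c_{m+2}\Lag_{m+2}$, and use the degree of the left-hand side to organize the case split. Since $R_*$ is monic quadratic, $\deg(R_*a_*)=\deg a_*+2$ exactly. Hence $\deg a_*=m$ forces $c_{m+2}\neq0$, and $\deg a_*=m-1$ forces $c_{m+2}=0$ and $c_{m+1}\ne0$. Any degree $d\le m-2$ would give $R_*a_*\in\Pcal_m$; orthogonality to $\Pcal_m$ would then make $R_*a_*=0$, which is impossible since $a_*\ne0$. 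So $\deg a_*\in\{m-1,m\}$, and in the lower branch we obtain \eqref{rig:eq:lower-branch} directly with $c=c_{m+1}\ne0$. The two alternatives are mutually exclusive because their degrees differ.

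The substantive case is $\deg a_*=m$, where we must show $c_{m+1}=0$ by means of the scale-stationarity identity \eqref{rig:eq:scale-inner-product}. Substituting the expansion gives
\[
0=\ip{R_*a_*}{ta_*}=c_{m+1}\ip{\Lag_{m+1}}{ta_*}+c_{m+2}\ip{\Lag_{m+2}}{ta_*}.
\]
Since $ta_*\in\Pcal_{m+1}$ and $\Lag_{m+2}\perp\Pcal_{m+1}$, the second term vanishes. For the first term, write $ta_*=\frac{\lc(a_*)}{\lc(\Lag_{m+1})}\Lag_{m+1}+r$ with $r\in\Pcal_m$. Orthonormality then gives $\ip{\Lag_{m+1}}{ta_*}=\lc(a_*)/\lc(\Lag_{m+1})\neq0$, which is the observation recorded just before the theorem. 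It follows that $c_{m+1}=0$, and $c=c_{m+2}\ne0$ gives \eqref{rig:eq:full-branch}.

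The main point requiring care is the degree bookkeeping: we need $\deg(R_*a_*)=\deg a_*+2$, which holds because $R_*$ has leading coefficient $1$, and we must rule out the degenerate possibility $R_*a_*=0$. Once these are settled, the argument is just coefficient matching in the orthonormal Laguerre basis.
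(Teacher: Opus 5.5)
Your proposal is correct and follows essentially the same route as the paper. The shared steps are: the exact degree count $\deg(R_*a_*)=\deg a_*+2$ from monicity of $R_*$; exclusion of $\deg a_*\le m-2$ by testing the Euler equation against $R_*a_*$ itself; and, in the full-degree case, elimination of the $\Lag_{m+1}$ mode via scale stationarity together with $\ip{\Lag_{m+1}}{ta_*}=\lc(a_*)/\lc(\Lag_{m+1})\neq0$. The only cosmetic difference is that you read off the lower branch from the two-mode expansion of Theorem~\ref{lag:thm:reduction}, whereas the paper invokes the one-dimensionality of the orthogonal complement of $\Pcal_m$ in $\Pcal_{m+1}$ directly.
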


\begin{proof}
Write \(d=\deg a_*\). Since \(R_*\) is monic of degree two and \(a_*\neq0\),
\begin{equation}\label{rig:eq:q-exact-degree}
    \deg(R_*a_*)=d+2,
    \qquad
    R_*a_*\neq0.
\end{equation}
If \(d\leq m-2\), then \(R_*a_*\in\Pcal_m\) is an admissible test polynomial in \eqref{var:eq:Euler-integral}, and taking \(b=R_*a_*\) gives \(0=\normw{R_*a_*}^2\), forcing \(R_*a_*\equiv0\) by positive definiteness, contradicting \eqref{rig:eq:q-exact-degree}. Hence \(d\geq m-1\), and since \(d\leq m\), only \(d=m-1\) and \(d=m\) remain. In the first case, \(R_*a_*\) has degree \(m+1\) and, by coefficient stationarity, is orthogonal to \(\Pcal_m\). Since the orthogonal complement of \(\Pcal_m\) inside \(\Pcal_{m+1}\) is one-dimensional, \(R_*a_*=c\Lag_{m+1}\) for some real \(c\), necessarily nonzero because \(R_*a_*\neq0\), which proves \eqref{rig:eq:lower-branch}.

In the second case \(d=m\), the polynomial \(R_*a_*\) has degree \(m+2\) and is again orthogonal to \(\Pcal_m\), so
\begin{equation}\label{rig:eq:two-mode}
    R_*a_*=c_1\Lag_{m+1}+c_2\Lag_{m+2}
\end{equation}
with \(c_2\neq0\), since \(c_2=0\) would force the right-hand side to have degree at most \(m+1\), contradicting the exact degree \(m+2\). Coefficient variation alone cannot go further, as both Laguerre components remain compatible with orthogonality to \(\Pcal_m\), so scale stationarity resolves the ambiguity. Since \(ta_*\in\Pcal_{m+1}\) and \(\Lag_{m+2}\perp\Pcal_{m+1}\), substituting \eqref{rig:eq:two-mode} into \eqref{rig:eq:scale-inner-product} gives
\[
    0=\ip{R_*a_*}{ta_*}=c_1\ip{\Lag_{m+1}}{ta_*}.
\]
The second factor is nonzero, because \(ta_*\) has exact degree \(m+1\), so \(c_1=0\) and \eqref{rig:eq:two-mode} reduces to \(R_*a_*=c_2\Lag_{m+2}\) with \(c_2\neq0\), proving \eqref{rig:eq:full-branch}.
\end{proof}

The one-mode identity has an immediate algebraic interpretation in terms of the roots of $R_*$ that are being deleted from the corresponding Laguerre polynomial. First, recall that $\rho_*\in (0,1)$. Hence, the two roots of \(R_*\) are
\begin{equation}\label{rig:eq:R-roots}
    x=\mu_*(1-\sqrt{\rho_*}),
    \qquad
    y=\mu_*(1+\sqrt{\rho_*}).
\end{equation}
They satisfy
\[
    0<x<y,
    \qquad
    R_*(t)=(t-x)(t-y).
\]

\begin{corollary}\label{rig:cor:deletion}
Under the assumptions of Theorem~\ref{rig:thm:rigidity}, let
\[
    n=
    \begin{cases}
        m+1,&\deg a_*=m-1,\\
        m+2,&\deg a_*=m.
    \end{cases}
\]
Then
\begin{equation}\label{rig:eq:Ra-Ln}
    R_*a_*=c\Lag_n
\end{equation}
for some \(c\neq0\), and the roots \(x,y\) of \(R_*\) are zeros of \(\Lag_n\). Consequently,
\begin{equation}\label{rig:eq:a-quotient}
    a_*(t)
    =c\frac{\Lag_n(t)}{(t-x)(t-y)}.
\end{equation}
\end{corollary}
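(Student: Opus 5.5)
The corollary is essentially a repackaging of Theorem~\ref{rig:thm:rigidity}, so the plan is to read off \eqref{rig:eq:Ra-Ln} from the two alternatives and then extract the root information by evaluation and division.

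\textbf{Step 1: the identity \eqref{rig:eq:Ra-Ln}.} By Theorem~\ref{rig:thm:rigidity}, either $\deg a_*=m-1$ and $R_*a_*=c\Lag_{m+1}$, or $\deg a_*=m$ and $R_*a_*=c\Lag_{m+2}$, with $c\neq0$ in both cases. With $n$ defined as in the statement, both alternatives read $R_*a_*=c\Lag_n$, which is \eqref{rig:eq:Ra-Ln}.

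\textbf{Step 2: $x$ and $y$ are zeros of $\Lag_n$.} Since $\rho_*\in(0,1)$ and $\mu_*>0$, the expression \eqref{rig:eq:R-roots} shows that $R_*(t)=(t-x)(t-y)$ with $0<x<y$. Evaluating \eqref{rig:eq:Ra-Ln} at $t=x$ and at $t=y$ gives $c\Lag_n(x)=R_*(x)a_*(x)=0$ and likewise $c\Lag_n(y)=0$. Because $c\neq0$, it follows that $\Lag_n(x)=\Lag_n(y)=0$.

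\textbf{Step 3: the quotient formula \eqref{rig:eq:a-quotient}.} The zeros of $\Lag_n$ are simple; this is standard for orthogonal polynomials, and in any case the argument only needs $x\neq y$. Hence $(t-x)(t-y)$ divides $\Lag_n$ in $\R[t]$. Dividing \eqref{rig:eq:Ra-Ln} by $R_*$ in the polynomial ring, which is an integral domain, gives \eqref{rig:eq:a-quotient} as an identity of polynomials. The right-hand side is understood as the polynomial quotient, so no issue arises at $t=x$ or $t=y$.

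\textbf{Main point to check.} There is no real obstacle; the only care needed is to confirm that $x$ and $y$ are distinct and real. This holds because $\rho_*>0$, which comes from the nondegeneracy of $T_{a_*}$, and because $\rho_*<1$ and $\mu_*>0$. These facts are what justify the factorization of $R_*$ and the division in Step~3.
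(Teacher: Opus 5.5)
Your proposal is correct and follows the paper's own argument: read off $R_*a_*=c\Lag_n$ from Theorem~\ref{rig:thm:rigidity}, then use the distinct real roots $0<x<y$ of $R_*$ from \eqref{rig:eq:R-roots} to conclude that $x,y$ are zeros of $\Lag_n$ and to divide. The divisibility of $\Lag_n$ by $(t-x)(t-y)$ already follows from the identity $R_*a_*=c\Lag_n$ itself, so your appeal to simplicity of the Laguerre zeros is harmless but not needed.
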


\begin{proof}
Immediate from Theorem~\ref{rig:thm:rigidity}, which gives \eqref{rig:eq:Ra-Ln}. Since \((t-x)(t-y)a_*(t)=c\Lag_n(t)\) and the roots \(x,y\) are distinct by \eqref{rig:eq:R-roots}, dividing gives \eqref{rig:eq:a-quotient} and shows \(\Lag_n(x)=\Lag_n(y)=0\).
\end{proof}
First, the parameters of the quadratic have a direct interpretation. From \eqref{rig:eq:R-roots},
\begin{align*}
    \mu_* = \frac{x+y}{2},
    \qquad
    \rho_* = \left(\frac{y-x}{x+y}\right)^2,
\end{align*}
which entails that the objective value is
\begin{equation}\label{rig:eq:scv-roots}
    \Vcal(a_*)
    =\frac{\rho_*}{1-\rho_*}
    =\frac{(y-x)^2}{4xy},
\end{equation}
from target function \eqref{eq:obj_scv}.

Second, the standard Laguerre polynomial \(\Lag_n\) has \(n\) positive simple zeros. Hence, the quadratic loss polynomial removes two of those zeros, while every remaining zero becomes a zero of the half-polynomial \(a_*\). In the following section, we show that such zeros are adjacent, and we identify which pair minimizes the objective. Those conclusions require the shifted-form inequality and Gauss--Laguerre quadrature.

\section{Quadrature and the adjacent-node criterion}\label{sec:adjacency}

The preceding section reduces the optimizer to the deletion of two zeros $x<y$ from a Laguerre polynomial $\Lag_n$, with $n=m+1$ on the degree-$(m-1)$ branch and $n=m+2$ on the full-degree branch. The corresponding objective value is \eqref{rig:eq:scv-roots}.
The remaining problem is therefore to determine which pair of Laguerre zeros can occur and which such pair minimizes this quantity.
We first show, using the shifted-form inequality \eqref{eq:shifted-form} and Gauss--Laguerre quadrature, that the deleted zeros must be adjacent. This reduces the two branches to the discrete values $\Delta_{m+1}$ and $\Delta_{m+2}$, respectively. Then, we show that $\Delta_{m+2}<\Delta_{m+1}$, yielding an optimizer of exact degree $m$.

The role of quadrature is to convert the shifted-form inequality into a discrete condition at these zeros. Since the optimizer is obtained by deleting two zeros of $\Lag_n$, Gauss--Laguerre quadrature allows the relevant weighted integrals to be expressed in terms of their values at the remaining nodes. This will provide the key step in showing that the deleted zeros must be adjacent. We therefore adapt the quadrature formulas needed below; see Gautschi \cite[\S1.4.2]{gautschi2004}. In particular, the degree-$2n$ extension below is needed for the lower-degree branch, where $b\in\Pcal_{n-1}$ and therefore $R\,b^2$ may have degree $2n$.

For the next proof and the subsequent sections, write
\begin{equation}\label{adj:eq:monic-Laguerre}
 \pi_n(t)=(-1)^n n!\Lag_n(t).
\end{equation}
Then $\pi_n$ is monic, its zeros are the $n$ simple positive zeros $ 0<x_{1,n}<\cdots<x_{n,n}$,
and
\begin{equation}\label{adj:eq:monic-norm}
 \int_0^\infty \pi_n(t)^2\e^{-t}\dd t=(n!)^2.
\end{equation}

\begin{lemma}[Gauss--Laguerre quadrature]
\label{adj:lem:quadrature}
For each $n\geq1$, there are positive weights
$w_{1,n},\ldots,w_{n,n}$ such that
\begin{equation}\label{adj:eq:gauss-exact}
 \int_0^\infty p(t)\e^{-t}\dd t
 =
 \sum_{r=1}^n w_{r,n}p(x_{r,n})
\end{equation}
for every $p \in \Pcal_{2n-1}$.
Moreover, for every
$p\in\Pcal_{2n}$,
\begin{equation}\label{adj:eq:gauss-error}
 \int_0^\infty p(t)\e^{-t}\dd t
 =
 \sum_{r=1}^n w_{r,n}p(x_{r,n})
 +(n!)^2[t^{2n}]p.
\end{equation}
Here $[t^{2n}]p$ denotes the coefficient of $t^{2n}$ in $p$, which is zero whenever $\deg p<2n$.
Finally, the weights are
\begin{equation}\label{adj:eq:weight-derivative}
 w_{r,n}
 =
 \frac{1}{x_{r,n}\Lag_n'(x_{r,n})^2},
 \qquad r=1,\ldots,n.
\end{equation}
\end{lemma}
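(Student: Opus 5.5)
The plan follows the standard construction of Gaussian quadrature, using the monic Laguerre polynomial $\pi_n$ from \eqref{adj:eq:monic-Laguerre}. I would first define the weights through Lagrange interpolation: let $\ell_r(t)=\pi_n(t)/\bigl((t-x_{r,n})\pi_n'(x_{r,n})\bigr)$ and set $w_{r,n}=\int_0^\infty \ell_r(t)\e^{-t}\dd t$.

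For exactness on $\Pcal_{2n-1}$, I would divide any $p\in\Pcal_{2n-1}$ by $\pi_n$, writing $p=s\pi_n+r$ with $s,r\in\Pcal_{n-1}$. Since $\pi_n\perp\Pcal_{n-1}$ in the weighted inner product, the term $\int s\pi_n\e^{-t}\dd t$ vanishes. The polynomial $r$ equals its Lagrange interpolant $\sum_r r(x_{r,n})\ell_r$. Moreover, $r(x_{r,n})=p(x_{r,n})$ because $\pi_n$ vanishes at the nodes, which gives \eqref{adj:eq:gauss-exact}. Positivity of the weights then follows by applying this exactness to $p=\ell_r^2\in\Pcal_{2n-2}$: the quadrature sum collapses to $w_{r,n}$, while the integral is strictly positive.

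For \eqref{adj:eq:gauss-error}, I would take $p\in\Pcal_{2n}$ and write $p=c\,\pi_n^2+\tilde p$ with $c=[t^{2n}]p$ and $\tilde p\in\Pcal_{2n-1}$. Here $\pi_n^2$ is monic of degree $2n$ and vanishes at every node, so its contribution to the quadrature sum is zero, while its integral equals $(n!)^2$ by \eqref{adj:eq:monic-norm}. Applying \eqref{adj:eq:gauss-exact} to $\tilde p$ then finishes this part.

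The main obstacle is the explicit weight formula \eqref{adj:eq:weight-derivative}. I would first use the Christoffel--Darboux identity for the orthonormal Laguerre system to get
\[
w_{r,n}=\Bigl(\sum_{k=0}^{n-1}\Lag_k(x_{r,n})^2\Bigr)^{-1}.
\]
Since $\Lag_n(x_{r,n})=0$, the confluent Christoffel--Darboux formula gives $\sum_{k<n}\Lag_k(x)^2=-n\Lag_n'(x)\Lag_{n-1}(x)$ at the node, up to the sign and normalization fixed by the leading-coefficient ratio $k_{n-1}/k_n=-n$. Next I would eliminate $\Lag_{n-1}$ with the differentiation identity $x\Lag_n'(x)=n\Lag_n(x)-n\Lag_{n-1}(x)$. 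At a zero this yields $n\Lag_{n-1}(x_{r,n})=-x_{r,n}\Lag_n'(x_{r,n})$, so the sum equals $x_{r,n}\Lag_n'(x_{r,n})^2$. The delicate part is tracking the signs and normalizations in Christoffel--Darboux. Rather than rely on that, I may cite Gautschi \cite[\S1.4.2]{gautschi2004} for this standard formula, and check it on $n=1$: there $x_{1,1}=1$ and $\Lag_1'=-1$, giving $w=1$, which matches $\int_0^\infty\e^{-t}\dd t=1$.
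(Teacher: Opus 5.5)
Your proof is correct. It differs from the paper's mainly in being self-contained. The paper proves the lemma purely by citation to Gautschi:
\begin{itemize}
\item exactness and positivity come from Theorems~1.45--1.46;
\item the degree-$2n$ identity comes from the Gaussian remainder formula, whose error term is $\frac{p^{(2n)}(\xi)}{(2n)!}\int_0^\infty\pi_n^2\e^{-t}\dd t$, and $p^{(2n)}/(2n)!=[t^{2n}]p$ is constant for $p\in\Pcal_{2n}$;
\item the weights come from the general Christoffel-number formula, with the Laguerre specialization left implicit.
\end{itemize}

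Your division-plus-interpolation proof of exactness and the $\ell_r^2$ positivity argument are the standard proofs behind those citations. Your derivation of \eqref{adj:eq:gauss-error} from the splitting $p=([t^{2n}]p)\,\pi_n^2+\tilde p$ is genuinely more elementary than the paper's route. It needs only exactness and \eqref{adj:eq:monic-norm}, not the mean-value form of the remainder.

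For the weights you carry out the specialization the paper omits, and your signs are right. The ratio is $k_{n-1}/k_n=-n$, so the confluent Christoffel--Darboux sum at a node equals $-n\Lag_n'(x_{r,n})\Lag_{n-1}(x_{r,n})$. The identity $x\Lag_n'=n\Lag_n-n\Lag_{n-1}$ then turns this into $x_{r,n}\Lag_n'(x_{r,n})^2$. You therefore do not need the fallback citation.

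Three minor points.
\begin{itemize}
\item The identity $w_{r,n}=1/\sum_{k<n}\Lag_k(x_{r,n})^2$ comes most directly from the reproducing property combined with exactness, not from Christoffel--Darboux itself. With $K_n(t,x)=\sum_{k<n}\Lag_k(t)\Lag_k(x)$, one has $1=\ell_r(x_{r,n})=\ip{\ell_r}{K_n(\cdot,x_{r,n})}$, and exactness evaluates the right-hand side as $w_{r,n}K_n(x_{r,n},x_{r,n})$.
\item You can bypass Christoffel--Darboux entirely. Apply exactness to the degree-$(2n-2)$ polynomial $\pi_n(t)\pi_{n-1}(t)/(t-x_{r,n})$, whose first factor is monic of degree $n-1$. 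This gives $w_{r,n}\pi_n'(x_{r,n})\pi_{n-1}(x_{r,n})=((n-1)!)^2$, which reduces to the same formula after the differentiation identity.
\item Avoid using $r$ both for the remainder polynomial in the division step and for the node index.
\end{itemize}
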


\begin{proof}
The degree-$(2n-1)$ exactness and positivity of the weights follow from
Gautschi \cite[Theorems~1.45--1.46]{gautschi2004}, while the Gaussian
remainder formula is given in
\cite[Corollary to Theorem~1.48, Eq.~(1.4.14)]{gautschi2004}.
For the monic Laguerre polynomial
$\pi_n$ in \eqref{adj:eq:monic-Laguerre}, the remainder constant is $(n!)^2$ in \eqref{adj:eq:monic-norm}.
Since
\[
\frac{p^{(2n)}}{(2n)!}=[t^{2n}]p,
\qquad p\in\Pcal_{2n},
\]
the standard remainder formula gives \eqref{adj:eq:gauss-error}.
Finally, the weight identity \eqref{adj:eq:weight-derivative} is the
Gauss--Laguerre specialization of the general Christoffel-number formula
\cite[Theorem~1.47, Eq.~(1.4.9)]{gautschi2004}.
\end{proof}

We now use quadrature to turn the shifted-form inequality into a condition on the deleted Laguerre zeros. Fix $n\ge2$ and two zeros $x_{j,n}<x_{k,n}$ of $\Lag_n$, and set
$$
R_{j,k}^{(n)}(t)=
(t-x_{j,n})(t-x_{k,n}),
$$
together with
$$
\mathfrak q_{j,k}^{(n)}(b)
=
\int_0^\infty
R_{j,k}^{(n)}(t)b(t)^2\e^{-t}\dd t.
$$
For each pair $j<k$, $R_{j,k}^{(n)}$ is the monic quadratic that would coincide with $R_*$ if $x_{j,n}$ and $x_{k,n}$ were the two deleted zeros.
Applying Gauss--Laguerre quadrature to $\mathfrak q_{j,k}^{(n)}(b)$ expresses its sign through the values of $R_{j,k}^{(n)}$ at the remaining Laguerre nodes, thereby reducing admissibility of the pair $(j,k)$ to a discrete sign condition.

\begin{theorem}[Adjacent-node criterion]\label{adj:thm:adjacent}
Let $1\leq j<k\leq n$.
\begin{enumerate}[label=\textup{(\roman*)}]
\item The quadratic form $\mathfrak q_{j,k}^{(n)}$ is nonnegative on $\Pcal_{n-2}$ if and only if $k=j+1$.
\item The same equivalence holds on $\Pcal_{n-1}$.
\item In the adjacent case, the nullspace on either space is one-dimensional and is spanned by
\begin{equation}\label{adj:eq:null-vector}
 a_{j,n}(t)
 =\frac{\Lag_n(t)}
 {(t-x_{j,n})(t-x_{j+1,n})}.
\end{equation}
The quotient in \eqref{adj:eq:null-vector} is understood as the
polynomial obtained after removing the two displayed linear factors from
$\Lag_n$.
\end{enumerate}
\end{theorem}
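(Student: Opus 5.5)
The plan is to evaluate $\mathfrak q_{j,k}^{(n)}(b)$ exactly by Gauss--Laguerre quadrature, Lemma~\ref{adj:lem:quadrature}, and read off its sign from the values of $R_{j,k}^{(n)}$ at the surviving nodes. For $b\in\Pcal_{n-2}$ the integrand $R_{j,k}^{(n)}b^2$ has degree at most $2n-2\le 2n-1$, so \eqref{adj:eq:gauss-exact} gives
\[
 \mathfrak q_{j,k}^{(n)}(b)=\sum_{r\ne j,k} w_{r,n}\,R_{j,k}^{(n)}(x_{r,n})\,b(x_{r,n})^2 ,
\]
since $R_{j,k}^{(n)}$ vanishes at $x_{j,n}$ and $x_{k,n}$. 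The $n-2$ remaining nodes are distinct, so evaluation $b\mapsto(b(x_{r,n}))_{r\ne j,k}$ is a linear isomorphism $\Pcal_{n-2}\to\R^{n-2}$ by Lagrange interpolation. Hence the form is nonnegative exactly when every coefficient $w_{r,n}R_{j,k}^{(n)}(x_{r,n})$ with $r\ne j,k$ is nonnegative. Because the weights are positive and $R_{j,k}^{(n)}(x_{r,n})<0$ precisely when $x_{j,n}<x_{r,n}<x_{k,n}$, this holds iff no node lies strictly between the two deleted ones, i.e.\ $k=j+1$. In the adjacent case every surviving coefficient is strictly positive, so the nullspace in $\Pcal_{n-2}$ consists of the $b$ vanishing at all $n-2$ remaining nodes. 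These are the multiples of $\prod_{r\ne j,j+1}(t-x_{r,n})$, which is proportional to $a_{j,n}$. This proves (i) and the $\Pcal_{n-2}$ half of (iii).

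For $\Pcal_{n-1}$, write $b=\alpha\,\pi_n/(t-x_{j,n})+\dots$; more simply, decompose $b=\beta\,\ell+b_0$, where $\ell(t)=\prod_{r\ne j}(t-x_{r,n})$ is monic of degree $n-1$ and $b_0\in\Pcal_{n-2}$. Now $R_{j,k}^{(n)}b^2$ has degree up to $2n$ with leading coefficient $\beta^2$, so \eqref{adj:eq:gauss-error} gives
\[
 \mathfrak q_{j,k}^{(n)}(b)=\sum_{r\ne j,k} w_{r,n}\,R_{j,k}^{(n)}(x_{r,n})\,b(x_{r,n})^2+(n!)^2\beta^2 .
\]
Necessity in (ii) follows from (i), since $\Pcal_{n-2}\subset\Pcal_{n-1}$. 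For sufficiency in the adjacent case, the sum is nonnegative and the extra term is $\ge0$, so the form is nonnegative. Equality forces $\beta=0$, so $b\in\Pcal_{n-2}$, and then $b$ must vanish at the surviving nodes. The nullspace is therefore again spanned by $a_{j,n}$, which gives (iii) on $\Pcal_{n-1}$.

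The main point needing care is the remainder term. One has to check that the coefficient of $t^{2n}$ in $R_{j,k}^{(n)}b^2$ is exactly the square of the $t^{n-1}$ coefficient of $b$, which holds because $R_{j,k}^{(n)}$ is monic quadratic. One also has to confirm that this term carries a positive sign, so that it only helps. Everything else reduces to interpolation and positivity of the Christoffel weights.
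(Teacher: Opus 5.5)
Your proof is correct and is essentially the paper's argument: exact Gauss--Laguerre quadrature on $\Pcal_{n-2}$, the sign of $R_{j,k}^{(n)}$ at the surviving nodes, and the nonnegative remainder $(n!)^2b_{n-1}^2$ on $\Pcal_{n-1}$. Your surjectivity-of-evaluation step plays the role of the paper's explicit isolating polynomial $b_r(t)=\prod_{s\neq j,r}(t-x_{s,n})$ in the converse.

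One slip to fix: since $\dim\Pcal_{n-2}=n-1$, evaluation at the $n-2$ surviving nodes is not an isomorphism. It is a surjection onto $\R^{n-2}$ whose one-dimensional kernel is spanned by $\prod_{r\neq j,k}(t-x_{r,n})$. Surjectivity is all your equivalence needs, and that kernel is exactly the nullspace you correctly identify in (iii).
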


\begin{proof}
Assume first that $k=j+1$. Let $b\in\Pcal_{n-2}$. The polynomial $R_{j,j+1}^{(n)}b^2$ has degree at most $2+2(n-2)=2n-2$, so the exact quadrature formula \eqref{adj:eq:gauss-exact} applies:
\begin{equation}\label{adj:eq:adjacent-sum}
 \mathfrak q_{j,j+1}^{(n)}(b)
 =\sum_{r=1}^n w_{r,n}
 R_{j,j+1}^{(n)}(x_{r,n})b(x_{r,n})^2.
\end{equation}
The quadratic $R_{j,j+1}^{(n)}$ is negative only on the open interval $(x_{j,n},x_{j+1,n})$, and because the two zeros are adjacent, no quadrature node lies in that interval. Hence $R_{j,j+1}^{(n)}(x_{r,n})\geq0$ for $1\leq r\leq n$, with equality precisely for $r=j$ and $r=j+1$. Since every weight and every square in \eqref{adj:eq:adjacent-sum} is nonnegative, $\mathfrak q_{j,j+1}^{(n)}(b)\geq0$.

Suppose equality holds. Then $b$ must vanish at every node except possibly $x_{j,n}$ and $x_{j+1,n}$, and there are $n-2$ such nodes. Since $\deg b\leq n-2$, either $b=0$ or
\[
 b(t)=C\prod_{\substack{1\leq r\leq n\\r\neq j,j+1}}
 (t-x_{r,n})
\]
for some nonzero constant $C$. This product is proportional to the quotient in \eqref{adj:eq:null-vector}. Thus the nullspace on $\Pcal_{n-2}$ is exactly the stated one-dimensional span.

Now let $b\in\Pcal_{n-1}$ and write $b(t)=b_{n-1}t^{n-1}+\cdots+b_0$, where $b_{n-1}$ denotes the fixed top coordinate and may be zero. For $p(t)=R_{j,j+1}^{(n)}(t)b(t)^2$, one has $[t^{2n}]p=b_{n-1}^2$, so Equation \eqref{adj:eq:gauss-error} gives
\[
 \mathfrak q_{j,j+1}^{(n)}(b)
 =\sum_{r=1}^n w_{r,n}
 R_{j,j+1}^{(n)}(x_{r,n})b(x_{r,n})^2
 +(n!)^2b_{n-1}^2.
\]
Both terms are nonnegative. If equality holds, then $b_{n-1}=0$, hence $b\in\Pcal_{n-2}$, and the nullspace description already proved applies. This proves all positive statements.

Conversely, suppose that $k\geq j+2$. Choose an index $r$ with $j<r<k$. Then $x_{r,n}$ lies strictly between the roots of $R_{j,k}^{(n)}$, so $R_{j,k}^{(n)}(x_{r,n})<0$. Define the isolating polynomial
\begin{equation*}
 b_r(t)=
 \prod_{\substack{1\leq s\leq n\\s\neq j,r}}
 (t-x_{s,n}),
\end{equation*}
which has degree $n-2$, hence belongs to both $\Pcal_{n-2}$ and $\Pcal_{n-1}$. At the quadrature nodes, $b_r(x_{s,n})=0$ unless $s=j$ or $s=r$, and the contribution at $s=j$ is killed by $R_{j,k}^{(n)}(x_{j,n})=0$. Thus exact quadrature yields $\mathfrak q_{j,k}^{(n)}(b_r)=w_{r,n}R_{j,k}^{(n)}(x_{r,n})b_r(x_{r,n})^2<0$. The form is therefore not nonnegative even on $\Pcal_{n-2}$. This proves the converse and completes the proof.
\end{proof}

The one-mode Laguerre rigidity theorem, Theorem~\ref{rig:thm:rigidity}, identifies the roots of $R_*$
 as two Laguerre zeros, while Theorem~\ref{adj:thm:adjacent} shows that global nonnegativity of the shifted form forces those zeros to be adjacent. Applying this criterion to the two possible optimizer degrees in Theorem~\ref{rig:thm:rigidity} gives the following branchwise characterization.

\begin{proposition}\label{adj:prop:adjacency-branches}
Let $a_*$ be an optimizer of the closed problem.
\begin{enumerate}[label=\textup{(\roman*)}]
\item If $\deg a_*=m-1$, then there exists $j\in\{1,\ldots,m\}$ such that
\begin{align*}
 R_*(t)=(t-x_{j,m+1})(t-x_{j+1,m+1})
\end{align*}
and
\begin{align*}
 a_*(t)=C\frac{\Lag_{m+1}(t)}
 {(t-x_{j,m+1})(t-x_{j+1,m+1})}.
\end{align*}
\item If $\deg a_*=m$, then there exists $j\in\{1,\ldots,m+1\}$ such that
\begin{align*}
 R_*(t)=(t-x_{j,m+2})(t-x_{j+1,m+2})
\end{align*}
and
\begin{align*}
 a_*(t)=C\frac{\Lag_{m+2}(t)}
 {(t-x_{j,m+2})(t-x_{j+1,m+2})}.
\end{align*}
\end{enumerate}
\end{proposition}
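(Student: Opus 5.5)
The plan is to combine Corollary~\ref{rig:cor:deletion} with the adjacent-node criterion, Theorem~\ref{adj:thm:adjacent}, one branch at a time. Let $a_*$ be an optimizer of the closed problem, and set $n=m+1$ if $\deg a_*=m-1$ and $n=m+2$ if $\deg a_*=m$. The corollary gives $R_*a_*=c\Lag_n$ with $c\neq0$. It also shows that the two roots $x<y$ of $R_*$ from \eqref{rig:eq:R-roots} are zeros of $\Lag_n$, so $x=x_{j,n}$ and $y=x_{k,n}$ for some indices $1\le j<k\le n$. Since $R_*$ is monic with roots $x_{j,n}$ and $x_{k,n}$, we get $R_*=R_{j,k}^{(n)}$, and hence $\int_0^\infty R_*b^2\e^{-t}\dd t=\mathfrak q_{j,k}^{(n)}(b)$ for every polynomial $b$.

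Next I would bring in Proposition~\ref{prop:shifted-form}. It says $\mathfrak q_{j,k}^{(n)}(b)\ge0$ for all $b\in\Pcal_m$, and the paper notes that this holds for every global optimizer of the degree-at-most-$m$ problem, whatever its actual degree. The two branches then work as follows.
\begin{itemize}
\item In the full-degree branch, $n=m+2$, so $\Pcal_m=\Pcal_{n-2}$, and part (i) of Theorem~\ref{adj:thm:adjacent} gives $k=j+1$.
\item In the lower branch, $n=m+1$, so $\Pcal_m=\Pcal_{n-1}$, and part (ii) gives $k=j+1$.
\end{itemize}
In both branches the index range $j\in\{1,\dots,n-1\}$ is $\{1,\dots,m\}$ or $\{1,\dots,m+1\}$, as the statement requires. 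The formula for $a_*$ is then just \eqref{rig:eq:a-quotient} with $C=c$, after dividing out the now adjacent factors.

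As a consistency check, and an alternative route to the formula for $a_*$, one can invoke part (iii). Proposition~\ref{prop:shifted-form} gives equality at $b=a_*$, so $a_*$ lies in the nullspace of the form $\mathfrak q_{j,j+1}^{(n)}$. Part (iii) says this nullspace is spanned by $a_{j,n}$, which gives the same expression for $a_*$.

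The only point needing care is the identification $R_*=R_{j,k}^{(n)}$ and the matching of $\Pcal_m$ with the correct space in Theorem~\ref{adj:thm:adjacent}. In the lower branch, the test space $\Pcal_m$ is strictly larger than $\Pcal_{n-2}$, and $\deg(R_*b^2)$ can reach $2n$. This is exactly why part (ii) of the theorem was proved with the degree-$2n$ remainder term. Apart from this bookkeeping, the argument is a direct assembly of results already established, and I expect no real obstacle.
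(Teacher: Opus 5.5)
Your proposal is correct and follows essentially the same route as the paper. The rigidity result (via Corollary~\ref{rig:cor:deletion}) places the roots of $R_*$ among the zeros of $\Lag_n$, Proposition~\ref{prop:shifted-form} gives nonnegativity of $\mathfrak q_{j,k}^{(n)}$ on $\Pcal_m$, and parts (ii) and (i) of Theorem~\ref{adj:thm:adjacent}, for $n=m+1$ and $n=m+2$ respectively, then force adjacency and the quotient form; your additional nullspace check via part (iii) is a harmless redundancy.
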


\begin{proof}

In the lower branch, put $n=m+1$. Then $\Pcal_m=\Pcal_{n-1}$. The
nonnegative shifted Rayleigh form from Proposition~\ref{prop:shifted-form} is
available for this optimizer even though $\deg a_*=m-1$. It says that the
quadratic form determined by the two roots of $R_*$ is nonnegative on all of
$\Pcal_{n-1}$. Theorem~\ref{rig:thm:rigidity} places those roots among the
zeros of $\Lag_n$, and part (ii) of Theorem~\ref{adj:thm:adjacent} forces them
to be adjacent and gives the quotient form.

In the full branch, put $n=m+2$. Then $\Pcal_m=\Pcal_{n-2}$. Apply part (i) of Theorem~\ref{adj:thm:adjacent} in exactly the same way.
\end{proof}

Proposition~\ref{adj:prop:adjacency-branches} shows that, in either branch, the optimizer is obtained from the appropriate Laguerre polynomial by deleting an adjacent pair of zeros: $\Lag_{m+1}$ in the degree-$(m-1)$ branch, $\Lag_{m+2}$ in the degree-$m$ branch. Adjacency is now proved, but both degrees remain possible, since nothing so far compares the best achievable value on one branch against the best achievable value on the other. In the next section we compare them.

\section{Exact degree and the optimal kernel}\label{sec:degree-selection}

We begin by evaluating the objective associated with deleting two zeros of a Laguerre polynomial. The resulting expressions depend only on the deleted pair and will then be specialized to the adjacent pairs identified in the previous section.

Let $n\geq2$, let $x<y$ be two distinct zeros of $\Lag_n$, and set
\begin{equation}\label{adj:eq:quotient-general}
 {a_n}(t)=\frac{\Lag_n(t)}{R(t)},
 \qquad
 R(t)=(t-x)(t-y).
\end{equation}

\begin{lemma}\label{adj:lem:moments}

For the quotient \eqref{adj:eq:quotient-general},
\begin{equation}\label{adj:eq:Q0-pair}
 Q_0({a_n})=\frac{x+y}{xy(y-x)^2},
\end{equation}
and
\begin{equation}\label{adj:eq:moment-ratios}
 \frac{Q_1({a_n})}{Q_0({a_n})}=\frac{2xy}{x+y},
 \qquad
 \frac{Q_2({a_n})}{Q_1({a_n})}=\frac{x+y}{2}.
\end{equation}
Consequently,
\begin{align*}
 \rho({a_n})=\frac{(y-x)^2}{(x+y)^2},
\end{align*}
and
\begin{equation}\label{adj:eq:V-pair}
 \Vcal({a_n})=\frac{(y-x)^2}{4xy}.
\end{equation}

\end{lemma}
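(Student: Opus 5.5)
The plan is to compute all three moments $Q_0,Q_1,Q_2$ of the quotient $a_n=\Lag_n/R$ by Gauss--Laguerre quadrature, using \cref{adj:lem:quadrature} with the $n$ zeros of $\Lag_n$ as nodes. Since $\deg a_n=n-2$, the integrand $t^k a_n(t)^2$ has degree at most $2(n-2)+2=2n-2\le 2n-1$ for $k=0,1,2$. Hence the exact formula \eqref{adj:eq:gauss-exact} applies and gives
\[
 Q_k(a_n)=\sum_{r=1}^n w_{r,n}\,x_{r,n}^k\,a_n(x_{r,n})^2 .
\]
The polynomial $a_n$ vanishes at every zero of $\Lag_n$ other than $x$ and $y$, because the zeros of $\Lag_n$ are simple. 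So only the two nodes $x$ and $y$ survive in this sum.

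The next step is to evaluate $a_n$ at the two deleted nodes. Since $\Lag_n(t)=(t-x)(t-y)a_n(t)$, differentiating and evaluating at $t=x$ gives $\Lag_n'(x)=(x-y)a_n(x)$. Likewise $\Lag_n'(y)=(y-x)a_n(y)$. Combining this with the weight formula \eqref{adj:eq:weight-derivative}, the derivative factors cancel:
\[
 w_x\,a_n(x)^2=\frac{1}{x\,\Lag_n'(x)^2}\cdot\frac{\Lag_n'(x)^2}{(y-x)^2}=\frac{1}{x(y-x)^2},
 \qquad
 w_y\,a_n(y)^2=\frac{1}{y(y-x)^2}.
\]
It follows that
\[
 Q_0(a_n)=\frac{x+y}{xy(y-x)^2},\qquad
 Q_1(a_n)=\frac{2}{(y-x)^2},\qquad
 Q_2(a_n)=\frac{x+y}{(y-x)^2}.
\]
This establishes \eqref{adj:eq:Q0-pair}, and the two ratios in \eqref{adj:eq:moment-ratios} follow by direct division.

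Finally, I will substitute into the definitions. We have $Q_1^2/(Q_0Q_2)=4xy/(x+y)^2$, so
\[
 \rho(a_n)=1-\frac{4xy}{(x+y)^2}=\frac{(y-x)^2}{(x+y)^2}.
\]
From \eqref{eq:obj_scv}, $\Vcal(a_n)=(x+y)^2/(4xy)-1=(y-x)^2/(4xy)$, which is \eqref{adj:eq:V-pair}.

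There is no real obstacle in this argument. The only points needing care are the degree count that keeps the quadrature exact up to $k=2$, which uses only $n\ge2$, and the derivative identity at the deleted nodes, which relies on the simplicity of the Laguerre zeros.
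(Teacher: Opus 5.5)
Your proposal is correct and follows essentially the same route as the paper: exact Gauss--Laguerre quadrature at the zeros of $\Lag_n$, so that only the two deleted nodes survive; the removable-singularity values $a_n(x)=\Lag_n'(x)/(x-y)$ and $a_n(y)=\Lag_n'(y)/(y-x)$ combined with the weights $w=1/(x\Lag_n'(x)^2)$; and then direct substitution. The only difference is cosmetic: you compute $\Vcal(a_n)$ directly as $Q_0Q_2/Q_1^2-1$ rather than via $\rho/(1-\rho)$, and the two are equivalent.
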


\begin{proof}
Because $a_n^2$ has degree $2n-4$, the polynomial $t^ka_n(t)^2$ has degree $2n-4+k$ for each $k=0,1,2$, so Gauss--Laguerre quadrature at the $n$ zeros of $\Lag_n$ (\Cref{adj:lem:quadrature}) is exact for all three simultaneously, since $2n-4+k\le 2n-1$. After the two linear factors corresponding to $x$ and $y$ are removed, the quotient $a_n$ still contains every other linear factor of $\Lag_n$, so it vanishes at every Laguerre quadrature node except the two deleted nodes $x$ and $y$; only these two summands survive in each of the three quadrature sums
\[
 Q_k(a_n)=\int_0^\infty t^ka_n(t)^2\e^{-t}\dd t
 =x^kw_xa_n(x)^2+y^kw_ya_n(y)^2,
 \qquad k=0,1,2,
\]
where $w_x,w_y$ denote the quadrature weights at $x,y$.

At the two removable singularities,
\[
 a_n(x)=\frac{\Lag_n'(x)}{x-y},
 \qquad
 a_n(y)=\frac{\Lag_n'(y)}{y-x}.
\]
Together with \eqref{adj:eq:weight-derivative}, these identities give
\[
 w_xa_n(x)^2=\frac{1}{x(y-x)^2},
 \qquad
 w_ya_n(y)^2=\frac{1}{y(y-x)^2}.
\]
Substitution in the three quadrature sums yields

$$
Q_0(a_n) = \frac{x+y}{xy(y-x)^2}, \qquad Q_1(a_n) = \frac{2}{(y-x)^2}, \qquad Q_2(a_n) = \frac{x+y}{(y-x)^2}.
$$

Equations \eqref{adj:eq:Q0-pair} and \eqref{adj:eq:moment-ratios} follow. Finally,
\[
 \rho(a_n)=1-\frac{Q_1(a_n)^2}{Q_0(a_n)Q_2(a_n)}
 =\frac{(y-x)^2}{(x+y)^2},
\]
and \eqref{adj:eq:V-pair} follows from the relation $\Vcal(a_n)=\rho(a_n)/(1-\rho(a_n))$.
\end{proof}

For adjacent zeros, define
\begin{equation}\label{adj:eq:delta}
 {\gamma_{j,n}}
 =\frac{(x_{j+1,n}-x_{j,n})^2}
 {4x_{j,n}x_{j+1,n}},
 \qquad 1\leq j<n,
\end{equation}
and
$$
\Delta_n=\min_{1\le j<n}{\gamma_{j,n}}.
$$
By Proposition~\ref{adj:prop:adjacency-branches} and Lemma~\ref{adj:lem:moments}, every optimizer on the degree-$(m-1)$ branch has value ${\gamma_{j,m+1}}$ for some $j$, while every optimizer on the degree-$m$ branch has value ${\gamma_{j,m+2}}$ for some $j$. Consequently, the two branch optima are bounded below by $\Delta_{m+1}$ and $\Delta_{m+2}$, respectively. These bounds are attained: for every adjacent pair of zeros of $\Lag_n$, Theorem~\ref{adj:thm:adjacent} gives a nonnegative shifted form whose nullspace contains the corresponding quotient, while Lemma~\ref{adj:lem:moments} gives its objective value ${\gamma_{j,n}}$. Hence, the optimal values on the two branches are exactly $\Delta_{m+1}$ and $\Delta_{m+2}$, and therefore
\begin{equation}\label{adj:eq:closed-value}
 \mathfrak v_m^{\le}=\min\{\Delta_{m+1},\Delta_{m+2}\}{,}
\end{equation}
Here the superscript $\le$ denotes optimization over the degree-at-most-$m$ problem, allowing $\deg a\le m$ rather than requiring exact degree $m$.

To conclude $R_*a_*=c\Lag_{m+2}$, it remains to prove that $\Delta_{m+2}$ is always strictly smaller than $\Delta_{m+1}$, which requires comparing the smallest zero-gaps of consecutive Laguerre degrees. Define
\begin{align*}
 z_{j,n}=\log x_{j,n},
 \qquad
 \ell_n=\min_{1\leq j<n}(z_{j+1,n}-z_{j,n}).
\end{align*}
For $0<x<y$,
\begin{align*}
 \frac{(y-x)^2}{4xy}
 =\frac14\left(\sqrt{\frac yx}-\sqrt{\frac xy}\right)^2\notag
 =\sinh^2\left(\frac12\log\frac yx\right).
\end{align*}
Consequently,
\begin{equation}\label{adj:eq:Delta-ell}
 \Delta_n=\sinh^2(\ell_n/2).
\end{equation}
Since $u\mapsto\sinh^2(u/2)$ is strictly increasing on $(0,\infty)$, we have
$\Delta_{n+1}<\Delta_n$ if and only if $\ell_{n+1}<\ell_n$.
We therefore compare the smallest spacings of the Laguerre zeros on the logarithmic scale, using the Laguerre differential equation transformed to logarithmic coordinates. The standard Laguerre equation is
\begin{equation}\label{adj:eq:Lag-ODE}
 t\Lag_n''(t)+(1-t)\Lag_n'(t)+n\Lag_n(t)=0.
\end{equation}

\begin{lemma}\label{adj:lem:log-normal}
Define
\begin{equation}\label{adj:eq:w-def}
 w_n(s)=\exp\left(-\frac{\e^s}{2}\right)\Lag_n(\e^s),
 \qquad s\in\R.
\end{equation}
Then, the zeros of $w_n$ are $z_{1,n},\ldots,z_{n,n}$ and
\begin{equation}\label{adj:eq:w-ODE}
 w_n''(s)+W_n(s)w_n(s)=0,
 \qquad
 W_n(s)=\left(n+\frac12\right)\e^s-\frac{\e^{2s}}4.
\end{equation}
Moreover,
\begin{equation}\label{adj:eq:W-max}
 W_n(s)\leq\left(n+\frac12\right)^2,
\end{equation}
with equality only when $\e^s=2n+1$.
\end{lemma}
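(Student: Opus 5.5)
The lemma makes three claims: the zeros of $w_n$ are the $z_{j,n}$, $w_n$ satisfies the normal-form equation \eqref{adj:eq:w-ODE}, and $W_n$ obeys the bound \eqref{adj:eq:W-max}. I will check each by direct computation.

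\emph{Zeros.} The factor $\exp(-\e^s/2)$ never vanishes, and $s\mapsto\e^s$ is a bijection from $\R$ onto $(0,\infty)$. All zeros of $\Lag_n$ are positive and simple, so $w_n(s)=0$ exactly when $\e^s=x_{j,n}$ for some $j$, that is, when $s=z_{j,n}$.

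\emph{The differential equation.} Put $t=\e^s$ and $v(s)=\Lag_n(\e^s)$. The chain rule gives $v'=t\Lag_n'$ and $v''=t^2\Lag_n''+t\Lag_n'$. Multiplying \eqref{adj:eq:Lag-ODE} by $t$ turns it into
\[
 v''-\e^s v'+n\e^s v=0 .
\]
This equation has the form $v''+P v'+Qv=0$ with $P=-\e^s$ and $Q=n\e^s$. I remove the first-order term by the standard Liouville substitution $v=w\exp\bigl(-\tfrac12\int P\bigr)=w\exp(\e^s/2)$, which is exactly $w=w_n$ as defined in \eqref{adj:eq:w-def}. The transformed equation is $w''+\bigl(Q-\tfrac14P^2-\tfrac12P'\bigr)w=0$. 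Substituting,
\[
 Q-\tfrac14P^2-\tfrac12P' = n\e^s-\tfrac14\e^{2s}+\tfrac12\e^s=W_n(s).
\]
To guard against sign errors, I would also verify this by differentiating $w_n=\exp(-\e^s/2)v$ twice directly.

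\emph{The bound.} Write $u=\e^s>0$. Then
\[
 W_n=\Bigl(n+\tfrac12\Bigr)u-\tfrac{u^2}{4}=\Bigl(n+\tfrac12\Bigr)^2-\tfrac14\bigl(u-(2n+1)\bigr)^2 .
\]
The subtracted square is nonnegative, which gives \eqref{adj:eq:W-max}, and it vanishes only when $u=2n+1$, which gives the equality case.

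No step here is a genuine obstacle. The only place an error could slip in is the coefficient bookkeeping in the Liouville reduction.
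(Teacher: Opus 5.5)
Your proof is correct and follows essentially the same route as the paper: substitute $t=\e^s$, multiply the Laguerre equation by $t$, remove the first-derivative term by writing $v=\e^{t/2}w_n$, and complete the square to get the bound. The only difference is cosmetic: you invoke the general Liouville normal-form coefficient $Q-\tfrac14P^2-\tfrac12P'$, whereas the paper differentiates $\e^{t/2}w_n$ twice by hand, and both give the same $W_n$.
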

\begin{proof}
Set $t=\e^s$ and $u_n(s)=\Lag_n(\e^s)$, so $u_n'=t\Lag_n'(t)$ and $u_n''=t\Lag_n'(t)+t^2\Lag_n''(t)$. Multiplying \eqref{adj:eq:Lag-ODE} by $t$ and substituting gives
\begin{equation}\label{adj:eq:u-ODE}
 u_n''(s)-t u_n'(s)+nt u_n(s)=0.
\end{equation}
Writing $u_n=\e^{t/2}w_n$ and using $t'=t$,
\[
 u_n'=\e^{t/2}\left(w_n'+\frac t2w_n\right),
 \qquad
 u_n''=\e^{t/2}\left(w_n''+t w_n'+\left(\frac{t^2}{4}+\frac t2\right)w_n\right),
\]
and substituting into \eqref{adj:eq:u-ODE} cancels the first-derivative terms, leaving $w_n''+\bigl((n+\tfrac12)t-t^2/4\bigr)w_n=0$, which is \eqref{adj:eq:w-ODE} since $t=\e^s$. The exponential factor in \eqref{adj:eq:w-def} is strictly positive, so it does not change the zeros. Finally, with $k=n+1/2$,
\[
 W_n(s)=k\e^s-\frac{\e^{2s}}4=k^2-\frac14(\e^s-2k)^2,
\]
which proves \eqref{adj:eq:W-max}.
\end{proof}

The normal form \eqref{adj:eq:w-ODE} together with the bound
\eqref{adj:eq:W-max} gives a universal lower bound on the logarithmic zero
spacings. The coefficient $W_n$ controls the local oscillation rate of
solutions to
$$
w_n''+W_n(s)w_n=0.
$$
Indeed, for the constant-coefficient equation $y''+k^2y=0$, consecutive zeros
are spaced by exactly $\pi/k$. Thus, an upper bound on $W_n$ prevents the zeros
of $w_n$ from becoming arbitrarily close, and the following lemma makes this precise

\begin{lemma}\label{adj:lem:gap-lower}
For every $n\geq2$,
\begin{equation}\label{adj:eq:ell-lower}
 \ell_n\geq\frac{\pi}{n+1/2}.
\end{equation}
Consequently,
\begin{equation}\label{adj:eq:Delta-lower}
 \Delta_n\geq
 \sinh^2\left(\frac{\pi}{2n+1}\right).
\end{equation}
\end{lemma}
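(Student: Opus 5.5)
We will use Sturm comparison on the Liouville normal form of \Cref{adj:lem:log-normal}. Put $k=n+\tfrac12$. Fix two consecutive zeros $z_{j,n}<z_{j+1,n}$ of $w_n$; by \Cref{adj:lem:log-normal} these are consecutive zeros of a nontrivial solution of $w''+W_n w=0$ on the interval $I=[z_{j,n},z_{j+1,n}]$. Suppose, for contradiction, that the gap $g=z_{j+1,n}-z_{j,n}$ is strictly less than $\pi/k$.

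The comparison function will be $v(s)=\sin\bigl(k(s-z_{j,n})\bigr)$. It solves $v''+k^2v=0$, and by the bound \eqref{adj:eq:W-max} we have $k^2\geq W_n$. Since $g<\pi/k$, $v$ is strictly positive on $(z_{j,n},z_{j+1,n}]$. Without loss of generality $w_n>0$ on the open interval, so $w_n'(z_{j,n})>0$ and $w_n'(z_{j+1,n})<0$, both nonzero because the zeros are simple.

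We then form the Wronskian $\omega=w_n'v-w_nv'$. It satisfies $\omega'=(k^2-W_n)w_nv\geq0$ on $I$. At the left endpoint, $\omega(z_{j,n})=0$ because $w_n$ and $v$ both vanish there. At the right endpoint, $\omega(z_{j+1,n})=w_n'(z_{j+1,n})\,v(z_{j+1,n})<0$. This contradicts monotonicity, so $g\geq\pi/k$. Taking the minimum over $j$ gives \eqref{adj:eq:ell-lower}.

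(In fact strict inequality holds, since $W_n<k^2$ except at one point, but only $\geq$ is claimed.) The same argument, via the Sturm--Picone form, covers the case $g=\pi/k$ if needed. Since that case already satisfies the inequality, it requires nothing.

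Finally, \eqref{adj:eq:Delta-lower} follows from \eqref{adj:eq:Delta-ell} and the monotonicity of $u\mapsto\sinh^2(u/2)$, because $\tfrac12\cdot\pi/(n+\tfrac12)=\pi/(2n+1)$. The only care needed is the sign bookkeeping at the endpoints and the legitimacy of comparison on a bounded subinterval. Both are routine, because $W_n$ is smooth and $w_n$ has simple zeros. I expect no substantive obstacle; the main step is simply recognizing that \eqref{adj:eq:W-max} is exactly the constant-coefficient majorant needed.
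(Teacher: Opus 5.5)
Your proof is correct, but it takes a different route from the paper.

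\textbf{The paper's route.} It argues variationally. Multiplying $w_n''+W_nw_n=0$ by $w_n$ and integrating between consecutive zeros $a<b$ gives $\int_a^b (w_n')^2=\int_a^b W_nw_n^2$. The sharp Wirtinger inequality for functions vanishing at both endpoints then yields the intervalwise estimate $\pi^2/(b-a)^2\le\max_{[a,b]}W_n\le(n+\frac12)^2$.

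\textbf{Your route.} You run the classical Sturm comparison against the explicit solution $\sin\bigl(k(s-a)\bigr)$ of $v''+k^2v=0$ and track the sign of the Wronskian $w_n'v-w_nv'$. Your bookkeeping is right: $\omega'=(k^2-W_n)w_nv\ge0$, $\omega(a)=0$, and $\omega(b)=w_n'(b)v(b)<0$. Strictness also comes for free, with no need for Sturm--Picone. If $g=\pi/k$, then $\omega(a)=\omega(b)=0$ forces $(k^2-W_n)w_nv\equiv0$ on $(a,b)$, which is impossible because $W_n<k^2$ off a single point.

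\textbf{What each buys.} The paper's form of the bound is stated in terms of $\max_{[a,b]}W_n$, and this estimate \eqref{adj:eq:intervalwise-gap} is reused in the proof of Theorem~\ref{thm:large-order}. There it locates a point in $[\log x_n,\log y_n]$ where $W_n\ge\pi^2/\ell_n^2$. The price is an appeal to an external sharp inequality (Hardy--Littlewood--P\'olya). Your argument is self-contained and elementary. It also fits naturally with the Pr\"ufer-angle argument the paper uses for the reverse comparison in Lemma~\ref{adj:lem:large-comparison}. Moreover, you only use $W_n\le k^2$ on the interval itself, so you can replace $k^2$ by $M=\max_{[a,b]}W_n$ and recover the same intervalwise estimate. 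This $M$ is necessarily positive: otherwise $w_n''\ge0$ on $[a,b]$, and a convex function vanishing at both ends cannot be positive inside.
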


\begin{proof}
Let $a<b$ be consecutive zeros of $w_n$. Multiplying \eqref{adj:eq:w-ODE} by $w_n$, integrating over $[a,b]$, and using $w_n(a)=w_n(b)=0$ gives $\int_a^b w_n'(s)^2\dd s=\int_a^b W_n(s)w_n(s)^2\dd s$. The sharp Wirtinger inequality for a function vanishing at both endpoints (see, e.g., Hardy--Littlewood--P\'olya \cite[Ch.~VII, \S7.7, Eq.~(7.7.1)]{hardylittlewoodpolya1952}) gives
\[
 \int_a^b w_n'(s)^2\dd s
 \geq\frac{\pi^2}{(b-a)^2}
 \int_a^b w_n(s)^2\dd s,
\]
while the same identity also bounds the left side above by $\bigl(\max_{[a,b]}W_n\bigr)\int_a^b w_n^2\dd s$. Since $w_n$ is nonzero between consecutive zeros, its squared integral is positive, so comparing the two estimates gives
\begin{equation}\label{adj:eq:intervalwise-gap}
 \frac{\pi^2}{(b-a)^2}
 \leq \max_{s\in[a,b]}W_n(s)
 \leq\left(n+\frac12\right)^2,
\end{equation}
hence $b-a\geq\pi/(n+1/2)$. Taking the minimum over all consecutive zero intervals proves \eqref{adj:eq:ell-lower}, and \eqref{adj:eq:Delta-lower} follows from \eqref{adj:eq:Delta-ell}.
\end{proof}

The preceding lemma gives the lower bound
in \eqref{adj:eq:ell-lower}. To prove $\ell_{n+1}<\ell_n$, it therefore suffices to exhibit a consecutive degree-$(n+1)$ logarithmic gap smaller than $\pi/(n+1/2)$. The next lemma does so for $n\geq5$.

\begin{lemma}\label{adj:lem:large-comparison}
For every $n\geq5$,
\begin{equation}\label{adj:eq:strict-large}
 \ell_{n+1}<\frac{\pi}{n+1/2}\leq\ell_n.
\end{equation}
\end{lemma}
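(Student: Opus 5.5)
The right-hand inequality in \eqref{adj:eq:strict-large} is exactly Lemma~\ref{adj:lem:gap-lower}, so the whole task is the left-hand inequality. The plan is to show that $\ell_{n+1}<\pi/(n+\tfrac12)$ by finding two consecutive zeros of $w_{n+1}$ that are closer than $\pi/(n+\tfrac12)$. This is the reverse of the Wirtinger argument: there an upper bound on the coefficient $W$ forced gaps to be large; here a \emph{lower} bound on $W_{n+1}$ near its peak will force gaps to be small, via Sturm comparison.

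\textbf{Step 1: choose the plateau.} Put $K=n+\tfrac32$. By Lemma~\ref{adj:lem:log-normal},
\[
W_{n+1}(s)=K^2-\tfrac14(\e^s-2K)^2 .
\]
For a parameter $\delta>0$ set $k^2=K^2-\delta$. Then $W_{n+1}\ge k^2$ exactly on the interval
\[
I_\delta=\bigl[\log(2K-2\sqrt\delta),\ \log(2K+2\sqrt\delta)\bigr],
\qquad
|I_\delta|=\log\frac{K+\sqrt\delta}{K-\sqrt\delta}=2\operatorname{artanh}\frac{\sqrt\delta}{K},
\]
where we need $\sqrt\delta<K$. We also require $k>n+\tfrac12$, which holds iff
\[
\delta<K^2-\bigl(n+\tfrac12\bigr)^2=2n+2 .
\]

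\textbf{Step 2: Sturm comparison.} Compare $w_{n+1}''+W_{n+1}w_{n+1}=0$ with $y''+k^2y=0$ on $I_\delta$. By the Sturm comparison theorem, every closed subinterval of $I_\delta$ of length $\pi/k$ contains a zero of $w_{n+1}$. Hence, if
\[
|I_\delta|\ge 2\pi/k ,
\]
then $I_\delta$ contains at least two zeros of $w_{n+1}$. Comparison again shows that any two consecutive zeros $a<b$ lying in $I_\delta$ satisfy $b-a\le\pi/k$. Since Lemma~\ref{adj:lem:log-normal} identifies the zeros of $w_{n+1}$ with the $z_{j,n+1}$, this gives
\[
\ell_{n+1}\le\frac{\pi}{k}<\frac{\pi}{n+\tfrac12}.
\]

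\textbf{Step 3: the arithmetic.} It remains to choose $\delta\in(0,2n+2)$ so that
\[
2\operatorname{artanh}\bigl(\sqrt\delta/K\bigr)\ \ge\ \frac{2\pi}{\sqrt{K^2-\delta}}\qquad\text{for all } n\ge5 .
\]
Asymptotically the left side is about $2\sqrt\delta/K$ and the right side about $2\pi/K$, so one needs roughly $\delta>\pi^2$. This is compatible with $\delta<2n+2$ only once $2n+2\gtrsim\pi^2$, which is why the lemma starts at $n=5$. I would first try $\delta=2n+1$, using $\operatorname{artanh}u\ge u+u^3/3$. For $n=5$ this gives $2\operatorname{artanh}(\sqrt{11}/6.5)\approx1.125$ against $2\pi/\sqrt{31.25}\approx1.124$, so it holds but only barely.

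\textbf{Main obstacle.} The hard step is making this inequality rigorous uniformly in $n\ge5$ with such a thin margin at $n=5$. The plan is:
\begin{itemize}
\item choose $\delta$ closer to the endpoint $2n+2$ (for instance $\delta=2n+2-\epsilon$, or optimise $\delta$ separately at each small $n$) to gain room;
\item verify $n=5,6,7$ by explicit numerical evaluation;
\item for $n\ge8$, prove the inequality with $\delta=2n+1$ by a clean monotonicity argument: $\sqrt\delta\,\sqrt{K^2-\delta}/K$ grows like $\sqrt{2n}$ while the requirement stays near $\pi$.
\end{itemize}
If the comparison constants prove too tight, a fallback is to apply the comparison with closed-interval care, so that strictness of $W_{n+1}>k^2$ in the interior yields strict inequalities throughout.
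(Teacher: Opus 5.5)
Your proposal is correct and is essentially the paper's proof. The paper uses exactly your plateau $I_\delta$ at the endpoint $\delta=2n+2$ (that is, $k=n+\tfrac12$, where $W_{n+1}>k^2$ on the open interval). It replaces Sturm comparison by the equivalent Pr\"ufer-angle bound $\theta'\ge k$, which is strict almost everywhere, and then shows the single increasing function $h(k)=k\operatorname{arctanh}\bigl(\sqrt{2k+1}/(k+1)\bigr)$ exceeds $\pi$ at $k=11/2$ via the cubic truncation, which covers all $n\ge5$ at once without a case split; this is your fallback.

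One correction: at $n=5$, $\delta=11$, the truncation gives $2(u+u^3/3)\approx1.109<1.124$, and only the full series reaches $\approx1.126$. That check therefore needs either $\delta$ closer to $2n+2$ (at $\delta=12$ one gets $\approx1.188$ against $2\pi/5.5\approx1.142$) or an exact numerical evaluation.
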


\begin{proof}
Put $k=n+\frac12$. For degree $n+1$, we have $W_{n+1}(s)=(k+1)\e^s-\e^{2s}/4$. Setting $t=\e^s$, the inequality $W_{n+1}(s)>k^2$ is equivalent to $t^2-4(k+1)t+4k^2<0$, whose roots are $t_\pm=2(k+1\pm\sqrt{2k+1})$; thus $W_{n+1}(s)>k^2$ exactly on $I_k=(\log t_-,\log t_+)$, whose length is
\begin{equation}\label{adj:eq:I-length}
 |I_k|=\log\frac{k+1+\sqrt{2k+1}}{k+1-\sqrt{2k+1}}=2\operatorname{arctanh}\left(\frac{\sqrt{2k+1}}{k+1}\right).
\end{equation}

We claim $|I_k|>2\pi/k$ for $k\geq11/2$. Let $u(k)=\sqrt{2k+1}/(k+1)$ and $h(k)=k\operatorname{arctanh}u(k)$, so $h'(k)=\operatorname{arctanh}u(k)-1/\sqrt{2k+1}$. Since $\operatorname{arctanh}u>u$ for $0<u<1$ and $u(k)>1/\sqrt{2k+1}$, we get $h'(k)>0$. At $k=11/2$,
\[
 h(11/2)>\frac{11}{2}\left[\frac{4\sqrt3}{13}+\frac13\left(\frac{4\sqrt3}{13}\right)^3\right]=\frac{4070\sqrt3}{2197}>\frac{22}{7}>\pi,
\]
the middle inequality being exact after clearing denominators and squaring: $3\cdot28490^2-48334^2=98864744>0$. Since $h$ is increasing, $h(k)>\pi$ for every $k\geq11/2$, which by \eqref{adj:eq:I-length} is the claim \eqref{adj:eq:I-long}:
\begin{equation}\label{adj:eq:I-long}
 |I_k|>\frac{2\pi}{k},
 \qquad k\geq\frac{11}{2}.
\end{equation}

Choose $[A,B]\subset I_k$ with $B-A>2\pi/k$. Let $w=w_{n+1}$ and, since a nontrivial solution of a second-order linear equation cannot have $w,w'$ vanish simultaneously, define a continuous Pr\"ufer angle by $\tan\theta(s)=kw(s)/w'(s)$, equivalently $w'=r\cos\theta$, $kw=r\sin\theta$ for $r=(w'^2+k^2w^2)^{1/2}>0$. Then $w(s)=0$ if and only if $\theta(s)\in\pi\mathbb Z$, so consecutive
zeros of $w$ are exactly the points at which $\theta$ attains consecutive
multiples of $\pi$, separated by the increase in $s$ needed for $\theta$ to
grow by $\pi$. This is the standard Pr\"ufer-angle device (see, e.g.,
\cite{teschl2012}). Differentiating and using $w''=-W_{n+1}w$,
\begin{equation*}
 \theta'(s)=k\frac{w'(s)^2+W_{n+1}(s)w(s)^2}{w'(s)^2+k^2w(s)^2},
\end{equation*}
so $\theta'(s)\geq k$ on $I_k$, with equality only where $w(s)=0$, which is a measure-zero set, since $w$ is a nontrivial solution of a second-order linear equation. For comparison, the constant-coefficient equation $y''+k^2y=0$ has $\theta(s)=ks$ for $y=\sin(ks)$, i.e.\ zeros spaced exactly $\pi/k$ apart; here $W_{n+1}>k^2$ on $I_k$ makes $\theta$ advance strictly faster. Integrating over $[A,B]$ therefore gives $\theta(B)-\theta(A)>k(B-A)>2\pi$.

Since $\theta$ is continuous and strictly increasing with
$\theta(B)-\theta(A)>2\pi$, the interval $(\theta(A),\theta(B))$ contains two
consecutive multiples of $\pi$, and the points $c<d$ at which $\theta$
attains these two values are consecutive zeros of $w$ with
$\theta(d)-\theta(c)=\pi$ exactly. Integrating $\theta'\geq k$ over $[c,d]$ gives $\pi>k(d-c)$, i.e.\ $d-c<\pi/k$. Since $w_{n+1}(s)=\e^{-\e^s/2}L_{n+1}(\e^s)$ vanishes exactly where $s=\log x_{r,n+1}$ for a zero $x_{r,n+1}$ of $L_{n+1}$, we have $c=\log x_{r,n+1}$, $d=\log x_{r+1,n+1}$ for some $r$, so $d-c$ is one of the logarithmic gaps minimized by $\ell_{n+1}$; hence
\[
 \ell_{n+1}\leq d-c<\frac\pi k.
\]
Combined with $\pi/k\leq\ell_n$ from Lemma~\ref{adj:lem:gap-lower}, this proves \eqref{adj:eq:strict-large}.
\end{proof}

Lemma~\ref{adj:lem:large-comparison} begins at $n=5$. The remaining comparisons $n=2,3,4$ are obtained by direct numerical evaluation in the next lemma.

\begin{lemma}\label{adj:lem:small-comparison}
One has
\begin{equation}\label{adj:eq:small-comparisons}
 \Delta_3<\Delta_2,
 \qquad
 \Delta_4<\Delta_3,
 \qquad
 \Delta_5<\Delta_4.
\end{equation}
\end{lemma}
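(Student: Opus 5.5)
The plan is to avoid computing every minimum exactly. I would combine the universal lower bound of Lemma~\ref{adj:lem:gap-lower} with an explicit upper bound on a single adjacent gap of the next Laguerre polynomial. Lemma~\ref{adj:lem:gap-lower} gives $\Delta_n\ge\sinh^2\bigl(\pi/(2n+1)\bigr)$ for $n=2,3,4$. Elementary bounds on $\sinh$ then give
\[
\sinh^2(\pi/5)>0.44,\qquad \sinh^2(\pi/7)>0.21,\qquad \sinh^2(\pi/9)>0.12 .
\]
For $n=2$ one may instead use the exact value $\Delta_2=1$ from \eqref{eq:case_m0}. It then suffices to exhibit one index $j$ with $\gamma_{j,n+1}$ below the corresponding threshold, because $\Delta_{n+1}\le\gamma_{j,n+1}$.

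For the upper bounds I would use the monic Laguerre polynomials \eqref{adj:eq:monic-Laguerre}:
\[
\pi_3=t^3-9t^2+18t-6,\qquad
\pi_4=t^4-16t^3+72t^2-96t+24,
\]
\[
\pi_5=t^5-25t^4+200t^3-600t^2+600t-120 .
\]
In each case the relevant gap is the last one, between the two largest zeros. Numerically these zeros are:
\begin{itemize}
\item $x_{2,3}\approx2.294$ and $x_{3,3}\approx6.290$, giving $\gamma_{2,3}\approx0.277$;
\item $x_{3,4}\approx4.537$ and $x_{4,4}\approx9.395$, giving $\gamma_{3,4}\approx0.138$;
\item $x_{4,5}\approx7.086$ and $x_{5,5}\approx12.641$, giving $\gamma_{4,5}\approx0.086$.
\end{itemize}
To make this rigorous, I would bracket each of the two zeros by evaluating $\pi_{n+1}$ at rational endpoints and observing a sign change. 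Since $\pi_{n+1}$ has only simple real zeros, a sign change isolates exactly one zero in each bracket, for example $x_{2,3}\in(2.29,2.30)$ and $x_{3,3}\in(6.28,6.30)$.

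Next I would use monotonicity. For $0<x<y$, the function $(y-x)^2/(4xy)=\sinh^2\bigl(\tfrac12\log(y/x)\bigr)$ is decreasing in $x$ and increasing in $y$. Substituting the lower bracket endpoint for $x$ and the upper bracket endpoint for $y$ therefore gives a rigorous upper bound on $\gamma_{j,n+1}$. For $n+1=3$, for instance, this gives $(6.30-2.29)^2/(4\cdot2.29\cdot6.30)<0.28<0.44$. The cases $n+1=4,5$ are handled the same way against the thresholds $0.21$ and $0.12$.

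The only real obstacle is choosing brackets tight enough. The margins are comfortable for $n=2,3$, but for $n=4$ the comparison is $0.086$ against $0.127$. Brackets of width about $0.01$ around $7.086$ and $12.641$ should still suffice. Those sign evaluations for a quintic with moderately large coefficients are tedious, though exact in rational arithmetic.

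If a computer-free check seems too heavy, there is a fallback: compute the eigenvalues of the $5\times5$ Jacobi matrix of Section~\ref{sec:computation} with interval arithmetic. The sign-change bracketing remains the primary plan, since it needs nothing beyond the explicit polynomials above.
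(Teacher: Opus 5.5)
Your proof is correct, but it is organized differently from the paper's. The paper proves this lemma by direct numerical evaluation. It computes the zeros of $\Lag_2,\dots,\Lag_5$ and reports $\Delta_2=1$, $\Delta_3\approx0.2766$, $\Delta_4\approx0.1385$, $\Delta_5\approx0.0861$, so all four minima are obtained in floating point without explicit error control; this is harmless given the margins.

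You instead bound the two sides asymmetrically. The larger side is bounded below by the analytic estimate of Lemma~\ref{adj:lem:gap-lower}. The smaller side is bounded above by a single rigorously bracketed pair of zeros. This is exactly the mechanism of Lemma~\ref{adj:lem:large-comparison}: exhibit a degree-$(n+1)$ logarithmic gap below $\pi/(n+\tfrac12)$. The difference is that the Pr\"ufer step, which only works once $|I_k|>2\pi/k$, is replaced by explicit sign checks. Your comparisons against $0.44$, $0.21$ and $0.12$ therefore establish \eqref{adj:eq:strict-large} for $n=2,3,4$ as well. The result is one uniform, exact-arithmetic proof of \eqref{adj:eq:Delta-monotone} for all $n\ge2$. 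What you give up is the actual values of $\Delta_n$ and the minimizing indices used in Table~\ref{tab:finite-values}, which the lemma itself does not need.

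Two small points on the details.

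\emph{Sign changes.} A sign change certifies an odd number of zeros in a bracket, not exactly one. You need neither exactness nor adjacency, however. If $\Lag_{n+1}$ has zeros $x\in(a_1,b_1)$ and $y\in(a_2,b_2)$ with $b_1\le a_2$, the adjacent logarithmic gaps between $x$ and $y$ sum to $\log(y/x)<\log(b_2/a_1)$. Hence $\Delta_{n+1}<(b_2-a_1)^2/(4a_1b_2)$ by the monotonicity you cite.

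\emph{Bracket width.} The bracketing is much lighter than you anticipate; integer or half-integer endpoints already suffice:
\begin{itemize}
\item $\pi_3(2)=2$, $\pi_3(3)=-6$, $\pi_3(6)=-6$, $\pi_3(7)=22$ give $\Delta_3<25/56<1=\Delta_2$;
\item $\pi_4(9/2)=33/16$, $\pi_4(5)=-31$, $\pi_4(9)=-111$, $\pi_4(10)=264$ give $\Delta_4<121/720<(\pi/7)^2<\Delta_3$;
\item $\pi_5(7)=62$, $\pi_5(8)=-952$, $\pi_5(12)=-3288$, $\pi_5(13)=2948$ give $\Delta_5<9/91<(\pi/9)^2<\Delta_4$.
\end{itemize}
Here only $\sinh u>u$ is used for the lower bounds. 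So brackets of width $0.01$ and tedious rational evaluations of the quintic are unnecessary.
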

\begin{proof}
Direct numerical evaluation of the Laguerre zeros and the relative-gap
score in \eqref{adj:eq:delta} gives
\[
 \Delta_2=1,\qquad
 \Delta_3\approx0.2765825308,\qquad
 \Delta_4\approx0.1384532326,\qquad
 \Delta_5\approx0.0861276525.
\]
These values immediately give
$\Delta_3<\Delta_2$, $\Delta_4<\Delta_3$, and
$\Delta_5<\Delta_4$, proving \eqref{adj:eq:small-comparisons}.
\end{proof}

Combining the finite-degree comparisons in Lemma~\ref{adj:lem:small-comparison} ($n=2,3,4$) with the large-degree comparison in Lemma~\ref{adj:lem:large-comparison} ($n\geq5$), together with \eqref{adj:eq:Delta-ell}, gives strict monotonicity of $\Delta_n$ at every degree:
\begin{equation}\label{adj:eq:Delta-monotone}
 \Delta_{n+1}<\Delta_n,
 \qquad n\geq2.
\end{equation}
This is also an independent spacing result for Laguerre zeros in its own right: the minimum logarithmic spacing decreases strictly with the degree, a comparison that ordinary interlacing does not by itself imply (compare the standard oscillation framework in \cite{szego}). This is exactly what is needed to rule out the lower-degree branch and select the full-degree optimizer. We can now exclude the boundary branch and state the optimal kernel explicitly.

\begin{theorem}[Optimal repeated-pole rational kernel]\label{adj:thm:final}
Let $a_*$ be a global optimizer of the closed degree-at-most-$m$ polynomial-square problem, and let $R_*$ be defined by \eqref{var:eq:R-def}. Then
\begin{align*}
 \deg a_*=m,
 \qquad
 R_*(t)a_*(t)=c\Lag_{m+2}(t),
\end{align*}
for some nonzero real constant $c$. Moreover, there is an index $j\in\{1,\ldots,m+1\}$ such that
\begin{align*}
 R_*(t)
 =(t-x_{j,m+2})(t-x_{j+1,m+2}),
\end{align*}
and
\begin{equation}\label{adj:eq:final-a}
 a_*(t)
 =C\frac{\Lag_{m+2}(t)}
 {(t-x_{j,m+2})(t-x_{j+1,m+2})}.
\end{equation}
The deleted pair minimizes the relative adjacent-gap score, and this common value is the optimum $\mathfrak v_m$ itself:
\begin{equation}\label{adj:eq:final-value}
\mathfrak v_m
 =\Vcal(a_*)
 =\Delta_{m+2}
 =\min_{1\leq r\leq m+1}
 \frac{(x_{r+1,m+2}-x_{r,m+2})^2}
 {4x_{r,m+2}x_{r+1,m+2}}.
\end{equation}
At the endpoint $m=0$, this reads $\mathfrak v_0=\Delta_2=1$. The unique half-polynomial is constant and the mean-one kernel is the unit exponential density.

Writing $n=m+2$, $x=x_{j,n}$, and $y=x_{j+1,n}$ for the minimizing pair, so that $a_*(t)=C\,\Lag_n(t)/\bigl((t-x)(t-y)\bigr)$, the normalized unit-damping density is
\begin{equation}\label{main:eq:unit-damping-opt}
 \kappa_{1,a_*}(z)
 =\frac{xy(y-x)^2}{x+y}\e^{-z}
 \left[\frac{\Lag_n(z)}{(z-x)(z-y)}\right]^2,
\end{equation}
and
\begin{equation}\label{main:eq:unit-moments}
 \E[T_{a_*}]=\frac{2xy}{x+y},
 \qquad
 \E[T_{a_*}^2]=xy.
\end{equation}
Choosing $\lambda_{m,j}=2xy/(x+y)$ produces the optimal mean-one kernel
\begin{align*}
 \kappa_{m,j}^*(u)
 =\lambda_{m,j}\frac{xy(y-x)^2}{x+y}
 \e^{-\lambda_{m,j}u}
 \left[
 \frac{\Lag_n(\lambda_{m,j}u)}
 {(\lambda_{m,j}u-x)(\lambda_{m,j}u-y)}
 \right]^2,
\end{align*}
with removable singularities, for which
\begin{align*}
 \int_0^\infty\kappa_{m,j}^*(u)\dd u=1,
 \qquad
 \E[U_{m,j}^*]=1,
 \qquad
 \Var(U_{m,j}^*)={\gamma_{j,n}}=\Delta_n.
\end{align*}
No other kernel in the exact-degree repeated-pole square-polynomial class has smaller variance, and every optimal kernel arises this way, up to the irrelevant sign and magnitude of the half-polynomial, from one minimizing adjacent pair.
\end{theorem}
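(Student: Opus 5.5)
The proof assembles the results already established; no new estimate is needed.

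\emph{Step 1: degree selection.} Let $a_*$ be a global optimizer of the closed degree-at-most-$m$ problem; it exists by the compactness argument of \Cref{sec:variational}. By \eqref{adj:eq:closed-value}, its value $\Vcal(a_*)$ equals $\min\{\Delta_{m+1},\Delta_{m+2}\}$. By \eqref{adj:eq:Delta-monotone} with $n=m+1\ge2$, $\Delta_{m+2}<\Delta_{m+1}$, so this value is $\Delta_{m+2}$.

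Suppose $\deg a_*=m-1$. Then Proposition~\ref{adj:prop:adjacency-branches}(i) and Lemma~\ref{adj:lem:moments} give $\Vcal(a_*)=\gamma_{j,m+1}\ge\Delta_{m+1}>\Delta_{m+2}$, which is a contradiction. Theorem~\ref{rig:thm:rigidity} allows only degrees $m-1$ and $m$, so $\deg a_*=m$. Part (ii) of Proposition~\ref{adj:prop:adjacency-branches} then gives the index $j$, the factorization $R_*=(t-x_{j,m+2})(t-x_{j+1,m+2})$, and \eqref{adj:eq:final-a}. Corollary~\ref{rig:cor:deletion} gives $R_*a_*=c\Lag_{m+2}$. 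Lemma~\ref{adj:lem:moments} gives $\Vcal(a_*)=\gamma_{j,m+2}$. Since this equals the optimum $\Delta_{m+2}$, the pair $j$ is a minimizing adjacent pair.

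\emph{Step 2: identify $\mathfrak v_m$.} The exact-degree class is contained in the closed class, so $\mathfrak v_m\ge\Delta_{m+2}$. Conversely, the quotient built from a minimizing adjacent pair of $\Lag_{m+2}$ has exact degree $m$, and by Lemma~\ref{adj:lem:moments} its value is $\Delta_{m+2}$. Hence $\mathfrak v_m=\Delta_{m+2}$, and the infimum is attained.

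For the exhaustive statement, take any exact-degree $a$ with $\Vcal(a)=\mathfrak v_m$. It is a global optimizer of the closed problem, so Step 1 applies and $a$ has the quotient form for some minimizing pair, up to a scalar. The endpoint $m=0$ is \eqref{eq:case_m0}: $\Lag_2$ divided by its two roots is constant.

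\emph{Step 3: the explicit formulas.} These come straight from the proof of Lemma~\ref{adj:lem:moments}. We have $Q_0=(x+y)/(xy(y-x)^2)$, which gives the normalizing constant in \eqref{main:eq:unit-damping-opt}. Next, $\E[T]=Q_1/Q_0=2xy/(x+y)$. Also $\E[T^2]=Q_2/Q_0=\frac{x+y}{(y-x)^2}\cdot\frac{xy(y-x)^2}{x+y}=xy$, which gives \eqref{main:eq:unit-moments}.

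Setting $\lambda=\lambda_{m,j}=\E[T]$ and substituting into \eqref{aw:eq:kappa-lambda-a} gives a kernel with unit mass and unit mean. Its variance is $\Vcal(a_*)=\gamma_{j,n}$, by \eqref{eq:obj_scv} and scale invariance. The singularities at $z=x,y$ are removable because $x,y$ are zeros of $\Lag_n$.

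The main obstacle is the strict monotonicity $\Delta_{m+2}<\Delta_{m+1}$, which is already established, so the only remaining care is logical. Exhaustiveness must be argued by showing that an exact-degree minimizer is a closed-problem minimizer. This uses $\mathfrak v_m=\mathfrak v_m^{\le}$, which in turn relies on attainability at exact degree from Step 2.
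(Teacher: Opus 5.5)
Your proposal is correct and follows the paper's proof essentially step for step. Like the paper, you take the closed value $\min\{\Delta_{m+1},\Delta_{m+2}\}$, use strict monotonicity to exclude the degree-$(m-1)$ branch, apply rigidity plus Proposition~\ref{adj:prop:adjacency-branches} for the quotient form, and get the explicit normalization and moments from Lemma~\ref{adj:lem:moments}. Your Step~2 uses the exact-degree quotient to obtain $\mathfrak v_m=\mathfrak v_m^{\le}$ and deduces exhaustiveness over the exact-degree class, which spells out a point the paper's proof leaves implicit.
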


\begin{proof}
By \eqref{adj:eq:closed-value}, the best value in the closed problem is
\[
 \mathfrak v_m^{\leq}=\min\{\Delta_{m+1},\Delta_{m+2}\}.
\]
Equation \eqref{adj:eq:Delta-monotone}, applied with $n=m+1$, gives
\[
 \Delta_{m+2}<\Delta_{m+1}.
\]
Therefore
\begin{equation}\label{adj:eq:closed-selected}
 \mathfrak v_m^{\leq}=\Delta_{m+2}.
\end{equation}
Every degree-$(m-1)$ optimizer belongs to the $\Lag_{m+1}$ branch and, by Proposition~\ref{adj:prop:adjacency-branches} and Lemma~\ref{adj:lem:moments}, has value at least $\Delta_{m+1}$. This is strictly larger than \eqref{adj:eq:closed-selected}. Hence no global optimizer can have degree $m-1$.

Theorem~\ref{rig:thm:rigidity} leaves only the full-degree alternative, proving
\[
 \deg a_*=m,
 \qquad
 R_*a_*=c\Lag_{m+2}.
\]
Proposition~\ref{adj:prop:adjacency-branches} then forces the two roots of $R_*$ to be an adjacent pair of zeros of $\Lag_{m+2}$ and gives the quotient representation \eqref{adj:eq:final-a}. Finally, because the global value is $\Delta_{m+2}$, the selected adjacent pair must attain the minimum in \eqref{adj:eq:final-value}, which proves $\mathfrak v_m=\Delta_{m+2}$.

For $m\geq1$, this adjacent-deletion form and the value $\Delta_{m+2}$ give, via Equation \eqref{adj:eq:Q0-pair}, the absolute
normalization in \eqref{main:eq:unit-damping-opt}, while
\eqref{adj:eq:moment-ratios} gives \eqref{main:eq:unit-moments}. Rescaling by
$\lambda_{m,j}=\E[T_{a_*}]$ produces a mean-one density without changing the
squared coefficient of variation, and \eqref{adj:eq:V-pair} gives its
variance. For $m=0$, $n=2$, the unique quotient is constant; using
$x+y=4$, $xy=2$, and $(y-x)^2=8$ reduces the formulas to
$\kappa_{1,a_*}(z)=\e^{-z}$, $\lambda=1$, and $\mathfrak v_0=1=\Delta_2$.
\end{proof}

Expanding the squared optimal half-polynomial as in
Remark~\ref{aw:rem:confluent} writes every optimal kernel as a signed
combination of common-rate Erlang densities. Any calculation linear in the
randomizing law can therefore be carried out for the required Erlang orders
and recombined with the optimal signed weights. This is the
same linearity exploited by the concentrated matrix-exponential inversion
method, which evaluates numerical Laplace-inversion formulas through an
analogous signed combination
\cite{horvathtalyigastelek2018,horvathetal2020nilt,almousaetal2022cme}.

The theorem reduces the original optimization over a continuous family of polynomial-square kernels to a finite problem on the zeros of a single Laguerre polynomial. In particular, the degree relaxation is exact, the optimizer has full degree $m$, and every optimal kernel is obtained by deleting an adjacent pair of zeros of $\Lag_{m+2}$ that minimizes the relative-gap score. Hence, both the optimal value and the corresponding mean-one kernel are determined explicitly by the zero geometry of $\Lag_{m+2}$.

\section{Constructive computation}\label{sec:computation}
We turn briefly to computational aspects of the construction. Most of what follows is standard, but it is worth stating explicitly in the present setting. 
In particular, take $n=m+2$. By \cref{adj:thm:final}, the nonlinear projective
optimization is exactly the finite comparison
\begin{equation}\label{num:eq:exact-input}
 \mathfrak v_m
 =\min_{1\le j<n}
 \frac{(x_{j+1,n}-x_{j,n})^2}{4x_{j,n}x_{j+1,n}}.
\end{equation}
This section gives a direct construction of the zeros and the minimizing
adjacent pair.

Recall the monic Laguerre polynomial $\pi_n(t)=(-1)^n n!\Lag_n(t)$ from
\eqref{adj:eq:monic-Laguerre}. This satisfies
\begin{equation}\label{num:eq:monic-recurrence}
 \pi_n(t)
 =
 \bigl(t-(2n-1)\bigr)\pi_{n-1}(t)
 -(n-1)^2\pi_{n-2}(t),
\end{equation}
with $\pi_0=1$ and $\pi_1=t-1$. Moreover, let
\begin{align*}
 \mathsf J_n=
 \begin{pmatrix}
  1 & 1 \\
  1 & 3 & 2 \\
    & 2 & 5 & \ddots \\
    &   & \ddots & \ddots & n-1\\
    &   &        & n-1 & 2n-1
 \end{pmatrix},
\end{align*}
be the Jacobi matrix associated with the monic Laguerre recurrence.
\begin{proposition}\label{num:prop:Jacobi}
For every $n\ge1$,
\[
 \det(tI_n-\mathsf J_n)=\pi_n(t).
\]
The eigenvalues of $\mathsf J_n$ are therefore
$x_{1,n},\ldots,x_{n,n}$. They are positive and simple.
\end{proposition}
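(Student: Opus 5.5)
The plan is to prove the determinant identity by induction on $n$ and then read off the spectral conclusions from results already recorded.

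Let $D_n(t)=\det(tI_n-\mathsf J_n)$. For the base cases, $D_1(t)=t-1=\pi_1(t)$. A direct $2\times2$ computation gives $D_2(t)=(t-1)(t-3)-1=t^2-4t+2$. The recurrence \eqref{num:eq:monic-recurrence} with $n=2$ gives the same polynomial, $(t-3)(t-1)-1\cdot\pi_0$. It is convenient to set $D_0=1$, so the induction can start at $n=2$ with $D_0=\pi_0$ and $D_1=\pi_1$.

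For $n\ge2$, expand $\det(tI_n-\mathsf J_n)$ along its last row. That row has diagonal entry $t-(2n-1)$ and off-diagonal entry $-(n-1)$ in column $n-1$. The diagonal cofactor is $D_{n-1}(t)$, because deleting the last row and column of $\mathsf J_n$ leaves exactly $\mathsf J_{n-1}$. The off-diagonal cofactor is computed by expanding the minor along its last column. That column contains only the entry $-(n-1)$, and the remaining block is $tI_{n-2}-\mathsf J_{n-2}$. Including the signs, this yields the standard tridiagonal three-term relation
\[
 D_n(t)=\bigl(t-(2n-1)\bigr)D_{n-1}(t)-(n-1)^2D_{n-2}(t).
\]
This is the same recurrence as \eqref{num:eq:monic-recurrence}, with the same initial data, so $D_n=\pi_n$ for all $n$ by induction.

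The only point needing a separate check is that the recurrence \eqref{num:eq:monic-recurrence}, which the paper states, really does hold for the monic Laguerre polynomials $\pi_n=(-1)^nn!\Lag_n$. If it had to be justified, I would start from the classical relation $(n)\Lag_n=(2n-1-t)\Lag_{n-1}-(n-1)\Lag_{n-2}$ and multiply through by $(-1)^n(n-1)!$. This converts it directly into the monic form with coefficients $2n-1$ and $(n-1)^2$.

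Finally, since $\mathsf J_n$ is real symmetric, its eigenvalues are real, and they are exactly the roots of $D_n=\pi_n$. These roots are the zeros $x_{1,n}<\cdots<x_{n,n}$ of $\Lag_n$, which the paper has already taken to be positive and simple. Simplicity also follows independently from the tridiagonal structure. Because all off-diagonal entries $1,\dots,n-1$ are nonzero, the matrix is unreduced, so each eigenspace is one-dimensional: an eigenvector is determined by its first component through the recurrence. Each eigenvalue of a symmetric matrix has geometric multiplicity equal to its algebraic multiplicity, so the eigenvalues are simple.

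Positivity can likewise be seen without appeal to $\Lag_n$. By Gershgorin's theorem, row $i$ of $\mathsf J_n$ has diagonal entry $2i-1$ and off-diagonal sum $(i-1)+i=2i-1$, so every eigenvalue is $\ge0$. Since $\det\mathsf J_n=\pi_n(0)(-1)^n=n!\,\Lag_n(0)=n!\ne0$, zero is not an eigenvalue, and all eigenvalues are strictly positive.
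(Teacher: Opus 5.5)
Your proof is correct and follows the paper's route: the determinant identity via the continuant three-term recurrence with matching initial data, and simplicity from the unreduced symmetric tridiagonal structure. The only deviation is positivity, where the paper exhibits the factorization $\mathsf J_n=B_nB_n^{\mathsf T}$ with $B_n$ lower bidiagonal ($\sqrt{k}$ on the diagonal, $\sqrt{k-1}$ below), while you combine Gershgorin's bound $\lambda\ge0$ with $\det\mathsf J_n=n!\neq0$; both arguments are valid.
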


\begin{proof}
The leading principal determinants of $tI_n-\mathsf J_n$ satisfy the
continuant recurrence \eqref{num:eq:monic-recurrence} with the same initial
values as $\pi_n$. This proves the characteristic-polynomial identity.
Moreover,
\[
 \mathsf J_n=B_nB_n^{\mathsf T},
\]
where $B_n$ is lower bidiagonal with
\[
 (B_n)_{kk}=\sqrt{k},
 \qquad
 (B_n)_{k,k-1}=\sqrt{k-1}.
\]
Thus $\mathsf J_n$ is positive definite. Its off-diagonal entries are
nonzero, so it is an irreducible real symmetric tridiagonal matrix and has
simple spectrum.
\end{proof}

Since the zeros are exactly the eigenvalues of the symmetric tridiagonal
Jacobi matrix $\mathsf J_n$, they can be computed with
specialized tridiagonal eigensolvers, avoiding direct root finding for the
Laguerre polynomial. The optimizer can therefore be constructed from this spectrum by the procedure in Algorithm~\ref{alg:optimizer}.

\begin{algorithm}[htbp]
\caption{Optimizer via the Jacobi spectrum}
\label{alg:optimizer}
\begin{algorithmic}[1]
\State $n \gets m+2$
\State Form the diagonal $d_k=2k-1$ and off-diagonal $e_k=k$ of $\mathsf J_n$
\State Compute the ordered eigenvalues $x_1<\cdots<x_n$ with a symmetric tridiagonal eigensolver
\For{$j = 1, \ldots, n-1$}
    \State ${\gamma_{j}} \gets \sinh^2\!\left[\dfrac12\log\!\left(\dfrac{x_{j+1}}{x_j}\right)\right]$
\EndFor
\State \Return any $j_* \in \argmin_{1\leq j<n}{\gamma_{j}}$
\end{algorithmic}
\end{algorithm}
Standard algorithms for real symmetric tridiagonal matrices
compute this spectrum; see \cite{lapack1999}.
The same tridiagonal eigenvalue problem underlies the Golub--Welsch algorithm
for Gauss-Laguerre quadrature nodes and weights \cite{golub1969}, treated in detail,
together with the numerical construction of orthogonal polynomials more
generally, in \cite{gautschi2004}.

Table~\ref{tab:finite-values} lists the resulting values of
$\mathfrak v_m$ for $m=0,\ldots,10$ against the Erlang benchmark $1/N$. Each row requires only forming $\mathsf J_{m+2}$, computing its
ordered eigenvalues, and scanning the adjacent relative gaps. No nonlinear
optimization is needed, and the results already show the strict improvement
over the Erlang benchmark from $m=1$ onward. The ten-digit display is retained
to facilitate comparison with the repeated-pole values reported by
M\'esz\'aros and Telek \cite{meszarostelek2022}. As noted in the
introduction, the admissible class here is larger than their real-rooted
parametrization, yet the optimizer is itself real-rooted, so the two minima
coincide and the values above supply the global certificate their numerical
study did not provide.

\begin{table}[h!]
\centering
\caption{Finite-order values obtained directly from the Jacobi
spectrum. The pair index is one-based, $N=2m+1$ is the rational order, and
the Erlang benchmark has the same order. Values are rounded to ten decimal
places.}
\label{tab:finite-values}
\small

\begin{tabular}{@{}rrrrrr@{}}
\toprule
$m$ & $N$ & $j_*$ & $\mathfrak v_m$ & $1/N$ & $N^2\mathfrak v_m$\\
\midrule
$0$  & $1$  & $1$  & $1.0000000000$ & $1.0000000000$ & $1.0000000000$\\
$1$  & $3$  & $2$  & $0.2765825308$ & $0.3333333333$ & $2.4892427770$\\
$2$  & $5$  & $3$  & $0.1384532326$ & $0.2000000000$ & $3.4613308143$\\
$3$  & $7$  & $4$  & $0.0861276525$ & $0.1428571429$ & $4.2202549738$\\
$4$  & $9$  & $5$  & $0.0600486248$ & $0.1111111111$ & $4.8639386117$\\
$5$  & $11$ & $6$  & $0.0449172542$ & $0.0909090909$ & $5.4349877526$\\
$6$  & $13$ & $7$  & $0.0352403354$ & $0.0769230769$ & $5.9556166832$\\
$7$  & $15$ & $7$  & $0.0283976276$ & $0.0666666667$ & $6.3894662039$\\
$8$  & $17$ & $8$  & $0.0228210879$ & $0.0588235294$ & $6.5952943947$\\
$9$  & $19$ & $9$  & $0.0188503929$ & $0.0526315789$ & $6.8049918389$\\
$10$ & $21$ & $10$ & $0.0159086737$ & $0.0476190476$ & $7.0157251231$\\
\bottomrule
\end{tabular}

\end{table}

Beyond the finite-order computation of the optimizer, the same Jacobi spectra
also reveal a clear pattern in the location of the minimizing deleted pair.
As $m\to\infty$, its midpoint divided by $m$ appears to approach $2$, while
the gap between the two deleted zeros approaches $2\pi$.
Figure~\ref{fig:finite-localization} illustrates this behavior at finite $m$. The next section proves these limits and uses them to determine
the large-order behavior of the optimal value.

\begin{figure}[h]
\centering
\begin{minipage}[t]{0.485\textwidth}
\centering
\includegraphics[width=\linewidth]{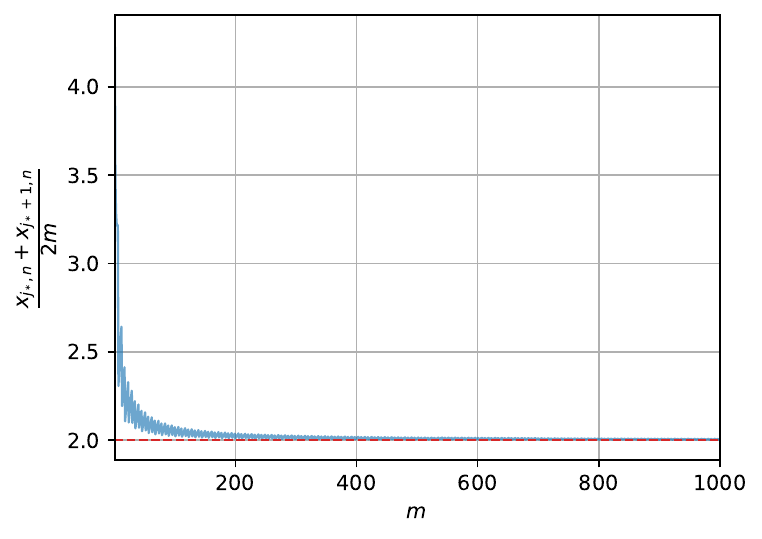}
\end{minipage}\hfill
\begin{minipage}[t]{0.485\textwidth}
\centering
\includegraphics[width=\linewidth]{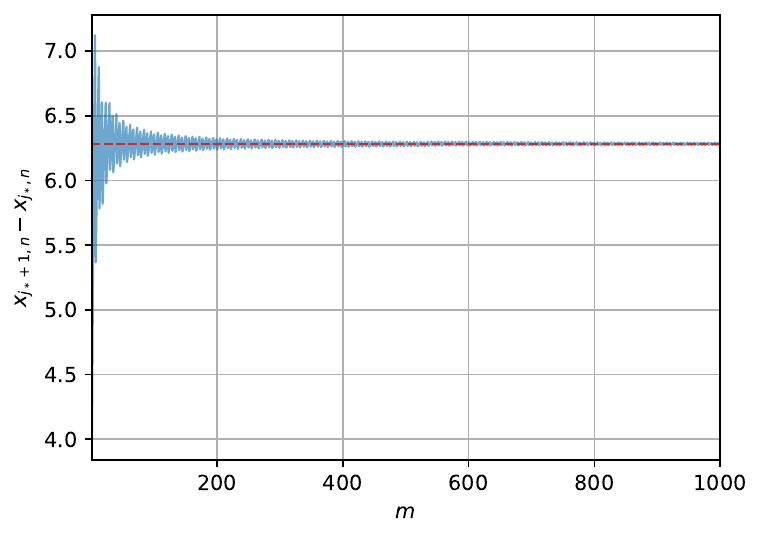}
\end{minipage}
\caption{Finite-order localization diagnostics obtained from the same Jacobi
spectra as \cref{tab:finite-values}, computed for $1\leq m\leq1000$. The left
panel shows the midpoint of a minimizing deleted pair divided by $m$; the
dashed line is its limit $2$. The right panel shows the corresponding
absolute gap; the dashed line is its limit $2\pi$.
}
\label{fig:finite-localization}
\end{figure}

\section{Large-order asymptotics and localization}\label{sec:asymptotics}

Let $j_n$ be any index attaining the minimum in the definition of $\Delta_n$.
By \eqref{adj:eq:Delta-ell}, this pair also realizes the minimum logarithmic
spacing $\ell_n$, while Lemma~\ref{adj:lem:gap-lower} gives
$\ell_n\geq\pi/(n+1/2)$. The sharp asymptotic constant is obtained from the
limiting distribution of the normalized Laguerre zeros.

Define the empirical measure $\nu_n=n^{-1}\sum_{j=1}^n\delta_{x_{j,n}/n}$. The relevant result is the following specialization of Kornyik and Michaletzky \cite[Theorem~1]{kornyikmichaletzky2017}: for the zeros of $L_p^{(\alpha_p)}$ with $\alpha_p/p\to c>-1$, the normalized empirical measure has limiting support $x_\pm=(\sqrt{c+1}\pm1)^2$ and, when $c\geq0$, limiting density $\sqrt{(x_+-x)(x-x_-)}/(2\pi x)$ on $[x_-,x_+]$. Here, $p=n$, $\alpha_p=0$, $c=0$, so $x_-=0$, $x_+=4$, and
\begin{align*}
 \rho_{\MP}(x)
 =
 \frac1{2\pi}\sqrt{\frac{4-x}{x}},
 \qquad 0<x<4,
\end{align*}
giving $\nu_n\Rightarrow\nu_{\MP}$
and, since $\nu_{\MP}(\partial I)=0$ for every closed interval $I\subset(0,4)$,
\begin{equation}\label{asy:eq:continuity}
 \nu_n(I)\longrightarrow\nu_{\MP}(I).
\end{equation}

\begin{theorem}[Sharp asymptotics and localization]\label{thm:large-order}
For every choice of minimizing indices $j_n$,
\begin{align}
 n^2\Delta_n&\longrightarrow\frac{\pi^2}{4},
 \label{asy:eq:value}\\
 n\ell_n&\longrightarrow\pi,
 \label{asy:eq:log}\\
 \frac{x_{j_n,n}}n,\ 
 \frac{x_{j_n+1,n}}n&\longrightarrow2,
 \label{asy:eq:location}\\
 x_{j_n+1,n}-x_{j_n,n}&\longrightarrow2\pi,
 \label{asy:eq:absolute}\\
 \frac{j_n}{n}&\longrightarrow\frac12+\frac1\pi.
 \label{asy:eq:index}
\end{align}
\end{theorem}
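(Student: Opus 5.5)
The plan is to squeeze the minimal logarithmic gap $\ell_n$ between the lower bound of Lemma~\ref{adj:lem:gap-lower} and a matching upper bound obtained by rerunning the Pr\"ufer-angle argument of Lemma~\ref{adj:lem:large-comparison} at the same degree $n$. Put $k=n+\tfrac12$. By Lemma~\ref{adj:lem:log-normal}, $W_n(s)=k^2-\tfrac14(\e^s-2k)^2$. Fix $\delta\in(0,1)$. On the set where $|\e^s-2k|\le 2k\sqrt\delta$ we have $W_n\ge k^2(1-\delta)$. In the variable $s$ this set is the interval $[\log(2k(1-\sqrt\delta)),\log(2k(1+\sqrt\delta))]$, whose length $\log\frac{1+\sqrt\delta}{1-\sqrt\delta}$ is a positive constant independent of $n$. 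For $n$ large this length exceeds $2\pi/(k\sqrt{1-\delta})$.

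On that interval, the Pr\"ufer angle defined with the parameter $k\sqrt{1-\delta}$ in place of $k$ satisfies $\theta'\ge k\sqrt{1-\delta}$. The argument of Lemma~\ref{adj:lem:large-comparison} then produces two consecutive zeros of $w_n$ whose distance is at most $\pi/(k\sqrt{1-\delta})$. Hence
\[
\frac{\pi}{k}\le \ell_n\le \frac{\pi}{k\sqrt{1-\delta}}
\]
for all large $n$. Letting $\delta\downarrow0$ gives $n\ell_n\to\pi$, which is \eqref{asy:eq:log}. Then \eqref{adj:eq:Delta-ell} together with $\sinh^2(u/2)\sim u^2/4$ as $u\to0$ gives $n^2\Delta_n\to\pi^2/4$, which is \eqref{asy:eq:value}.

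For the localization \eqref{asy:eq:location}, let $c=\log x_{j_n,n}$ and $d=\log x_{j_n+1,n}$ be the endpoints of a minimizing gap. By \eqref{adj:eq:Delta-ell}, $d-c=\ell_n$. The intervalwise Wirtinger estimate \eqref{adj:eq:intervalwise-gap} gives
\[
\max_{[c,d]}W_n\ge \frac{\pi^2}{\ell_n^2}=k^2(1-o(1)).
\]
Since $W_n=k^2-\tfrac14(\e^s-2k)^2$, some $s_0\in[c,d]$ satisfies $|\e^{s_0}-2k|=o(k)$. Because $d-c=O(1/n)$, both $\e^c$ and $\e^d$ differ from $\e^{s_0}$ by a factor $1+O(1/n)$. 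Therefore $x_{j_n,n}/n$ and $x_{j_n+1,n}/n$ both tend to $\lim 2k/n=2$.

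The absolute gap \eqref{asy:eq:absolute} then follows from
\[
x_{j_n+1,n}-x_{j_n,n}=x_{j_n,n}\bigl(\e^{\ell_n}-1\bigr)=2n(1+o(1))\cdot\frac{\pi}{n}(1+o(1))\longrightarrow 2\pi .
\]

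For the index \eqref{asy:eq:index}, I would write
\[
\frac{j_n}{n}=\nu_n\bigl([0,x_{j_n,n}/n]\bigr).
\]
Given $\varepsilon>0$, for large $n$ this is sandwiched between $\nu_n([0,2-\varepsilon])$ and $\nu_n([0,2+\varepsilon])$. Each of these tends to the corresponding $\nu_{\MP}$ mass by \eqref{asy:eq:continuity}. The endpoint $0$ causes no trouble: weak convergence applies to $[0,b]$ since $\nu_{\MP}$ has no atoms, or alternatively one can split off $[0,\eta]$, whose limiting mass is $O(\sqrt\eta)$. Letting $\varepsilon\to0$ reduces the claim to evaluating $\nu_{\MP}([0,2])$. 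With the substitution $x=4\sin^2\theta$, the density becomes $(4/\pi)\cos^2\theta\,\dd\theta$ on $[0,\pi/4]$, and
\[
\nu_{\MP}([0,2])=\frac4\pi\left(\frac\pi8+\frac14\right)=\frac12+\frac1\pi .
\]

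The main obstacle is the upper bound on $\ell_n$. The Pr\"ufer comparison must be set up with a slightly reduced frequency $k\sqrt{1-\delta}$, and one must check that the window where $W_n$ is near its maximum has length bounded below uniformly in $n$; this is exactly what the logarithmic coordinates provide. The remaining steps are direct consequences of \eqref{adj:eq:intervalwise-gap} and the Marchenko--Pastur limit.
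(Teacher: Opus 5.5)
Your proof is correct. It departs from the paper at the one step that needs a genuine upper bound on $\ell_n$.

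\textbf{The paper's route.} It gets $\limsup_n n^2\Delta_n\le\pi^2/4$ from the Marchenko--Pastur law. It counts the zeros with $x_{j,n}/n\in[2-\varepsilon,2+\varepsilon]$ and uses a pigeonhole argument to find an adjacent pair with gap at most $2\varepsilon n/(N_n(\varepsilon)-1)$. It then lets $\varepsilon\downarrow0$ using $\rho_{\MP}(2)=1/(2\pi)$.

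\textbf{Your route.} You rerun the Pr\"ufer comparison of Lemma~\ref{adj:lem:large-comparison} at the same degree $n$, with the frequency lowered to $k\sqrt{1-\delta}$. You apply it on the window around $\e^s=2k$ where $W_n\ge k^2(1-\delta)$. The key observation is right: this window has $n$-independent length $\log\frac{1+\sqrt\delta}{1-\sqrt\delta}$ in logarithmic coordinates, while the required length $2\pi/(k\sqrt{1-\delta})$ shrinks. Your remaining steps essentially coincide with the paper's: Wirtinger localization via \eqref{adj:eq:intervalwise-gap}, the absolute gap through $x_{j_n,n}(\e^{\ell_n}-1)$, and the index via $\nu_n([0,b])$. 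For the index you correctly note that the endpoint $0$ is harmless because $\nu_{\MP}$ has no atoms.

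\textbf{What your route buys.} First, \eqref{asy:eq:value}--\eqref{asy:eq:absolute} no longer depend on the Kornyik--Michaletzky theorem, which is then needed only for \eqref{asy:eq:index}. Second, the argument can be made quantitative. Take $\delta=2\pi^2/k^2$ and use $\log\frac{1+u}{1-u}>2u$; the window is then long enough as soon as $k\ge2\pi$. This gives $\pi/k\le\ell_n\le\pi/\sqrt{k^2-2\pi^2}$ for $n\ge6$, that is, $\ell_n=\pi/k+O(k^{-3})$, and hence an explicit rate for $n^2\Delta_n$. A weak-convergence argument cannot supply such a rate.

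\textbf{What the paper's route buys.} It is brief given the external limit theorem, which is needed for \eqref{asy:eq:index} anyway. It also explains the constant directly: $2\pi=1/\rho_{\MP}(2)$ is the local mean spacing of the zeros near $2n$.
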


\begin{proof}
From \eqref{adj:eq:ell-lower} and \eqref{adj:eq:Delta-ell}, $\liminf_n n^2\Delta_n\ge\pi^2/4$. For the upper bound, fix $\varepsilon\in(0,1)$, put $I_\varepsilon=[2-\varepsilon,2+\varepsilon]$, $\mu_\varepsilon=\nu_{\MP}(I_\varepsilon)$, and let $N_n(\varepsilon)=\#\{j:x_{j,n}/n\in I_\varepsilon\}$, so that $N_n(\varepsilon)/n\to\mu_\varepsilon$ by \eqref{asy:eq:continuity}. Since $\mu_\varepsilon>0$, eventually $N_n(\varepsilon)\ge2$; writing $r_n,s_n$ for the first and last indices in $I_\varepsilon$, $\sum_{j=r_n}^{s_n-1}(x_{j+1,n}-x_{j,n})=x_{s_n,n}-x_{r_n,n}\le2\varepsilon n$, so some adjacent pair $k_n$ satisfies $x_{k_n+1,n}-x_{k_n,n}\le2\varepsilon n/(N_n(\varepsilon)-1)$. Both zeros are at least $n(2-\varepsilon)$, so
\[
 \Delta_n
 \le
 \frac{\varepsilon^2}
 {(2-\varepsilon)^2\{N_n(\varepsilon)-1\}^2}
 \quad\Longrightarrow\quad
 \limsup_n n^2\Delta_n
 \le
 \frac{\varepsilon^2}{(2-\varepsilon)^2\mu_\varepsilon^2}.
\]
Since $\rho_{\MP}$ is continuous at $2$ with $\rho_{\MP}(2)=1/(2\pi)$, $\mu_\varepsilon=\varepsilon/\pi+o(\varepsilon)$, and letting $\varepsilon\downarrow0$ gives $\limsup_n n^2\Delta_n\le\pi^2/4$, proving \eqref{asy:eq:value}. Since $\ell_n=2\operatorname{arsinh}\sqrt{\Delta_n}$ and $\operatorname{arsinh}u/u\to1$, this also gives \eqref{asy:eq:log}.

For the localization statements, set $x_n=x_{j_n,n}$, $y_n=x_{j_n+1,n}$, and recall the logarithmic normal form $w_n''+W_nw_n=0$ with $W_n(s)=(n+\tfrac12)\e^s-\e^{2s}/4$. Since larger $W_n$ permits shorter zero spacing 
(\Cref{adj:eq:intervalwise-gap})
near equality in the Poincar\'e--Wirtinger bound can occur only where $W_n$ is close to its maximum; applying the intervalwise estimate \eqref{adj:eq:intervalwise-gap} on $[\log x_n,\log y_n]$, where $W_n$ is continuous and hence attains its maximum, gives a point $s_n$ in this interval with $W_n(s_n)\ge\pi^2/\ell_n^2$. Setting $t_n=\e^{s_n}\in[x_n,y_n]$, \eqref{adj:eq:w-ODE} divided by $n^2$ gives, using $n\ell_n\to\pi$,
\[
 \left(1+\frac1{2n}\right)\frac{t_n}{n}
 -
 \frac14\left(\frac{t_n}{n}\right)^2
 \ge1+o(1),
\]
while completing the square on the same expression shows it equals $(1+1/2n)^2-\tfrac14(t_n/n-2-1/n)^2\le(1+1/2n)^2=1+o(1)$. Squeezing these two bounds forces the subtracted square to vanish asymptotically, so $t_n/n\to2$; since $x_n\le t_n\le y_n$ and $y_n/x_n=\e^{\ell_n}\to1$, also $x_n/t_n\to1$ and $y_n/t_n\to1$, proving \eqref{asy:eq:location}. Then $y_n-x_n=(x_n/n)(n\ell_n)(\e^{\ell_n}-1)/\ell_n\to2\pi$, proving \eqref{asy:eq:absolute}.

Finally, for $a\in(0,4)$ let $M_n(a)=\#\{j:x_{j,n}/n\le a\}$. Localization gives, for every fixed $\varepsilon\in(0,2)$ and large $n$, $M_n(2-\varepsilon)<j_n\le M_n(2+\varepsilon)$; dividing by $n$, by the weak convergence $\nu_n\Rightarrow\nu_{\MP}$, and letting $\varepsilon\downarrow0$ gives $j_n/n\to F_{\MP}(2)=\int_0^2\rho_{\MP}$. Substituting $x=4\sin^2\theta$, $F_{\MP}(2)=\tfrac4\pi\int_0^{\pi/4}\cos^2\theta\,\dd\theta=\tfrac12+\tfrac1\pi$, proving \eqref{asy:eq:index}.
\end{proof}

The theorem is stated in terms of the Laguerre degree $n$. We first translate these limits to the original polynomial degree $m$ and rational order $N=2m+1$.

\begin{corollary}\label{asy:cor:m}
Let $\mathfrak v_m$ be the optimum in
\eqref{num:eq:exact-input}. Then
\begin{align}
 \mathfrak v_m
 &=
 \frac{\pi^2}{4m^2}+o(m^{-2})\notag\\
 \label{eq:conv_spacing}
 (2m+1)^2\mathfrak v_m
 &\longrightarrow\pi^2.
\end{align}
For every minimizing pair among the zeros of $\Lag_{m+2}$,
\begin{align*}
 \frac{x_{j_m,m+2}}{m+2},\
 \frac{x_{j_m+1,m+2}}{m+2}
 &\longrightarrow2,\\
 x_{j_m+1,m+2}-x_{j_m,m+2}
 &\longrightarrow2\pi,\\
 \frac{j_m}{m+2}
 &\longrightarrow\frac12+\frac1\pi.
\end{align*}
\end{corollary}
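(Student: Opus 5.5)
The plan is to obtain everything by substituting $n=m+2$ into Theorem~\ref{thm:large-order}, using Theorem~\ref{adj:thm:final}. That theorem gives $\mathfrak v_m=\Delta_{m+2}$ for every $m\ge0$. It also says every minimizing pair for $\mathfrak v_m$ is an adjacent pair of zeros of $\Lag_{m+2}$ attaining $\Delta_{m+2}$. So any sequence of minimizing indices $j_m$ for the corollary is a sequence $j_n$ of minimizing indices in the sense of Theorem~\ref{thm:large-order}, reindexed by $n=m+2$. Since that theorem holds for every choice of minimizing indices, its conclusions apply along this sequence. The case $m=0$ only affects finitely many terms and is irrelevant for limits.

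For the value, \eqref{asy:eq:value} gives $(m+2)^2\Delta_{m+2}\to\pi^2/4$. Hence
\[
 m^2\mathfrak v_m=\Bigl(\frac{m}{m+2}\Bigr)^2(m+2)^2\Delta_{m+2}\longrightarrow\frac{\pi^2}{4},
\]
which is exactly $\mathfrak v_m=\pi^2/(4m^2)+o(m^{-2})$. Likewise, $(2m+1)^2\mathfrak v_m=\bigl(\tfrac{2m+1}{m+2}\bigr)^2(m+2)^2\Delta_{m+2}$, and $\bigl(\tfrac{2m+1}{m+2}\bigr)^2\to4$, so this tends to $4\cdot\pi^2/4=\pi^2$. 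That proves \eqref{eq:conv_spacing}.

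The localization statements are \eqref{asy:eq:location}, \eqref{asy:eq:absolute}, and \eqref{asy:eq:index} read verbatim with $n=m+2$. They need no rescaling, because the corollary keeps the normalization by $m+2$.

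There is no real obstacle. The only point needing care is that the minimizing pairs in the corollary coincide with those in Theorem~\ref{thm:large-order}. This follows from the last sentence of Theorem~\ref{adj:thm:final} together with the identity $\gamma_{j,n}=\sinh^2\bigl(\tfrac12\log(x_{j+1,n}/x_{j,n})\bigr)$.
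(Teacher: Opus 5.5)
Your proposal is correct and follows the paper's own proof: both substitute $n=m+2$ into Theorem~\ref{thm:large-order}, use $\mathfrak v_m=\Delta_{m+2}$, and rescale via $m/(m+2)\to1$ and $(2m+1)/(m+2)\to2$. In both, the localization limits are read off directly, since the corollary already normalizes by $m+2$.
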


\begin{proof}
Set $n=m+2$ in \cref{thm:large-order} and use
\eqref{num:eq:exact-input}. Since $m/(m+2)\to1$, the first limit follows.
The exact rational order is $2m+1$, whose ratio to $m$ tends to $2$.
The remaining statements are direct translations of
\eqref{asy:eq:location}--\eqref{asy:eq:index} (in finite-$n$ terms the
leading location is $2n=2m+4$, asymptotically equivalent to $2m$).
\end{proof}

Consequently, the optimal value decays quadratically in the rational order, with sharp constant $\pi^2$, while the minimizing deleted pair localizes near $2(m+2)$ with asymptotic separation $2\pi$. We next translate these geometric asymptotics into the behavior of the corresponding optimal randomizer.

\begin{corollary}\label{asy:cor:moments}
Let $T_m^*$ have the optimal unit-damping density

\[
\kappa_{1,a_*}(u)
=
\frac{\e^{-u}a_*(u)^2}{Q_0(a_*)},
\qquad u\ge0.
\]
Then
\[
 \frac{\E[T_m^*]}{m+2}\longrightarrow2,
 \qquad
 \Var(T_m^*)\longrightarrow\pi^2.
\]
After rescaling to mean one, the optimal randomizers satisfy $\E[(U_m^*-1)^2]\longrightarrow0$, and hence concentrate at $1$ in mean square and in probability.
\end{corollary}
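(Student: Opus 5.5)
We will read both moments off the closed forms already established for the optimizer. By \cref{adj:thm:final}, with $n=m+2$ and $x=x_{j,n}$, $y=x_{j+1,n}$ a minimizing adjacent pair, equation \eqref{main:eq:unit-moments} gives $\E[T_m^*]=2xy/(x+y)$ and $\E[(T_m^*)^2]=xy$. Hence
\[
 \Var(T_m^*)=xy-\frac{4x^2y^2}{(x+y)^2}=xy\,\frac{(y-x)^2}{(x+y)^2}.
\]
All quantities are then expressed through $x/n$, $y/n$ and $y-x$, which are controlled by \cref{thm:large-order} and \cref{asy:cor:m}.

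For the mean, write
\[
 \frac{\E[T_m^*]}{n}=\frac{2(x/n)(y/n)}{x/n+y/n}.
\]
By \eqref{asy:eq:location} both $x/n$ and $y/n$ tend to $2$. Continuity of the harmonic-mean map at $(2,2)$ then gives the limit $2$. This holds for every choice of minimizing index, since the theorem's limits hold uniformly over such choices.

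For the variance, factor it as
\[
 \Var(T_m^*)=(y-x)^2\cdot\frac{xy}{(x+y)^2}.
\]
By \eqref{asy:eq:absolute}, $(y-x)^2\to4\pi^2$. By \eqref{asy:eq:location}, $xy/(x+y)^2=(x/n)(y/n)/(x/n+y/n)^2\to 4/16=1/4$. The product therefore tends to $\pi^2$. The only step with real content is the absolute-gap limit \eqref{asy:eq:absolute}, and we take it as already proved. A cross-check is available: $\Var(T_m^*)=\E[T_m^*]^2\,\mathfrak v_m$, where $\E[T_m^*]^2\sim 4n^2$ and $\mathfrak v_m\sim\pi^2/(4n^2)$ by \eqref{asy:eq:value}. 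This route avoids \eqref{asy:eq:absolute} entirely and gives the same constant, so we may present it as the primary argument.

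Finally, set $U_m^*=T_m^*/\E[T_m^*]$. Then $\E[U_m^*]=1$ and
\[
 \E[(U_m^*-1)^2]=\Var(U_m^*)=\mathfrak v_m=\Delta_{m+2}\to0
\]
by \eqref{adj:eq:final-value} and \cref{asy:cor:m}; this is mean-square convergence to $1$. Chebyshev's inequality, $\Pr(|U_m^*-1|>\varepsilon)\le\mathfrak v_m/\varepsilon^2$, then gives convergence in probability. No step is a genuine obstacle; the only care needed is to make sure the limits are invoked for an arbitrary minimizing pair, which \cref{thm:large-order} explicitly allows.
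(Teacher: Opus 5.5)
Your proposal is correct and matches the paper's proof: the paper also reads $\E[T_m^*]=2x_my_m/(x_m+y_m)$ and $\Var(T_m^*)=x_my_m(y_m-x_m)^2/(x_m+y_m)^2$ off Lemma~\ref{adj:lem:moments}, applies the localization and absolute-gap limits of Corollary~\ref{asy:cor:m}, and concludes with $\Var(U_m^*)=\mathfrak v_m\to0$ and Chebyshev's inequality. Your alternative route via $\Var(T_m^*)=\E[T_m^*]^2\,\mathfrak v_m$ is also valid and slightly more economical, since it needs only \eqref{asy:eq:value} and \eqref{asy:eq:location} and not \eqref{asy:eq:absolute}.
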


\begin{proof}
Let $x_m<y_m$ be the deleted pair. By \eqref{num:eq:mean}--\eqref{num:eq:second} with $\lambda=1$, and Lemma~\ref{adj:lem:moments},
\[
 \E[T_m^*]=\frac{2x_my_m}{x_m+y_m},
 \qquad
 \Var(T_m^*)
 =
 \frac{x_my_m(y_m-x_m)^2}{(x_m+y_m)^2}.
\]
Applying \cref{asy:cor:m}, the first ratio tends to $2(m+2)$, the dimensionless factor in the variance tends to $1/4$, and $(y_m-x_m)^2\to4\pi^2$. Finally, $\E[(U_m^*-1)^2]=\Var(U_m^*)=\mathfrak v_m\to0$, and Chebyshev's inequality gives $\mathbb P\{|U_m^*-1|>\varepsilon\}\le\mathfrak v_m/\varepsilon^2\to0$ for every $\varepsilon>0$, the asserted convergence in probability.
\end{proof}

Hence, the optimal unit-damping kernels retain an asymptotically finite absolute spread while their mean grows linearly with the order. After normalization to mean one, this spread vanishes, and the optimal randomizers converge to the deterministic target. Together with \eqref{eq:conv_spacing},
this gives both the sharp concentration rate and its probabilistic interpretation.

\section{Discussion}\label{sec:discussion}
Theorem~\ref{adj:thm:final} solves a precise approximation problem. For the positive
operator $\Aop_m^*h(t_0)=\E[h(t_0U_m^*)]$, \cref{rand:prop:error-bound} gives
\[
 \sup_{\lVert h''\rVert_\infty\le1}
 |\Aop_m^*h(t_0)-h(t_0)|
 =\frac{t_0^2}{2}\mathfrak v_m
 =\left(\frac{\pi^2}{8m^2}+o(m^{-2})\right)t_0^2.
\]
Since $\mathfrak v_m$ is by construction the minimal variance in
the class, this is also the minimal worst-case error attainable by any
admissible kernel. However, the construction does more than bound this error uniformly. By \cref{asy:cor:moments}, the mean-one randomizer $U_m^*$ itself concentrates at $1$ in mean square and in probability as $m\to\infty$. Thus, the randomized evaluation $t_0U_m^*$ is not merely close to the deterministic point $t_0$ in a worst-case sense at each finite order. It is a genuinely concentrating family of random approximations to that point, sharpening toward it as $m$ grows. 

At rational order $N=2m+1$, the Erlang member of the class has variance
$1/N$, whereas
\[
 N^2\mathfrak v_m\longrightarrow\pi^2.
\]
The square-polynomial matrix-exponential class therefore
concentrates at rate $N^{-2}$, a full order in $N$ better than Erlang's
$N^{-1}$. The finite values in
\cref{tab:finite-values} show the improvement already at small orders. This
comparison is a concentration statement, not a universal replacement theorem
for Erlangization: phase-type recursions may not extend unchanged to the
optimal matrix-exponential kernels.

The fixed-pole approximation literature optimizes different quantities.
Andersson's concentrated-pole problem and the related fixed-real-pole results
\cite{andersson1981,anderssonganelius1977,borwein1983} concern global
approximation errors for rational functions. The present admissible set is constrained by time-domain
nonnegativity and a square-polynomial factor, while its criterion is the worst-case error above. Consequently, the optimal damping and asymptotic
constants answer a different question, even though the repeated-real-pole
architecture is closely related.

Within the broader CME-R programme, two comparisons with M\'esz\'aros and
Telek \cite{meszarostelek2022} are worth keeping separate. The present
class is a proper subclass of their full CME-R problem, since all
transform poles are forced to coalesce at one negative real location, but
an enlargement of the real-rooted repeated-pole family used in their
numerical study, since $a$ need not be real-rooted. As noted in the
introduction, the resulting optimizer is nonetheless real-rooted, so the
two minima coincide and the present result supplies a global analytic
certificate for the optimum they observed numerically.

The broader CME and CME-R constructions
\cite{horvathsafartelekzambo2016,horvathhorvathtelek2020,almousatelek2021,meszarostelek2022}
allow larger parameterizations and are generally obtained numerically. At
some finite orders these broader parameterizations report smaller SCVs, but
the reported values come from numerical optimization rather than global
certificates for their full admissible classes. The optimization-free
complex-pole family of Battagliola and Peralta
\cite{battagliolaperalta2026} addresses another part of the design landscape.
Its squared coefficient of variation has order $\log^3(N)/N^2$, which decays more slowly than the $N^{-2}$ rate proved here, although its construction is more
explicit and avoids the Laguerre geometry and quadrature machinery used in this work.

The constructive mechanism proceeds in four stages: projective minimization over the closed polynomial class, one-mode Laguerre rigidity, adjacent-zero deletion, and the Jacobi spectral construction. Natural extensions include the full nonnegative-polynomial cone, several distinct real damping rates, and conjugate pairs of complex damping rates. The one-mode rigidity and adjacency arguments are specific to the present subclass and
should not be assumed to survive unchanged.

Taken together, these results give a complete solution of the repeated-real-pole
square-polynomial problem, both at finite order and in the large-order limit.
The optimizer is determined by the zero geometry of a single Laguerre
polynomial, can be recovered from a symmetric tridiagonal eigenvalue problem,
and yields the sharp concentration law $N^2\mathfrak v_m\longrightarrow\pi^2$.


\begin{thebibliography}{99}

\bibitem{abatewhitt2006}
J.~Abate and W.~Whitt,
A unified framework for numerically inverting Laplace transforms,
\emph{INFORMS J. Comput.} \textbf{18} (2006), 408--421.
\url{https://doi.org/10.1287/ijoc.1050.0137}

\bibitem{akargursoyhorvathtelek2021}
N.~Akar, \"O.~G\"ursoy, G.~Horv\'ath, and M.~Telek,
Transient and first passage time distributions of first- and second-order
multi-regime Markov fluid queues via ME-fication,
\emph{Methodol. Comput. Appl. Probab.} \textbf{23} (2021), 1257--1283.
\url{https://doi.org/10.1007/s11009-020-09812-y}

\bibitem{aldousshepp1987}
D.~Aldous and L.~Shepp,
The least variable phase type distribution is Erlang,
\emph{Comm. Statist. Stochastic Models} \textbf{3} (1987), 467--473.
\url{https://doi.org/10.1080/15326348708807067}

\bibitem{almousaetal2022cme}
S.~Al-Deen Almousa, G.~Horv\'ath, I.~Horv\'ath, A.~M\'esz\'aros, and M.~Telek,
The CME method: Efficient numerical inverse Laplace transformation with
concentrated matrix exponential distribution,
\emph{SIGMETRICS Perform. Eval. Rev.} \textbf{49} (2022), 29--34.
\url{https://doi.org/10.1145/3543146.3543155}

\bibitem{almousatelek2021}
S.~Al-Deen Almousa and M.~Telek,
Enhanced optimization of high order concentrated matrix-exponential
distributions,
\emph{Ann. Math. Inform.} \textbf{53} (2021), 5--19.
\url{https://doi.org/10.33039/ami.2021.02.001}

\bibitem{lapack1999}

E.~Anderson et al.,
\emph{LAPACK Users' Guide}, 3rd ed.,
Society for Industrial and Applied Mathematics, Philadelphia, 1999.
\url{https://doi.org/10.1137/1.9780898719604}


\bibitem{andersson1981}

J.-E.~Andersson,
Approximation of $\e^{-x}$ by rational functions with concentrated negative
poles,
\emph{J. Approx. Theory} \textbf{32} (1981), 85--95.
\url{https://doi.org/10.1016/0021-9045(81)90106-4}


\bibitem{anderssonganelius1977}

J.-E.~Andersson and T.~Ganelius,
The degree of approximation by rational functions with fixed poles,
\emph{Math. Z.} \textbf{153} (1977), 161--166.


\bibitem{asmussenavramusabel2002}
S.~Asmussen, F.~Avram, and M.~Usabel,
Erlangian approximations for finite-horizon ruin probabilities,
\emph{ASTIN Bull.} \textbf{32} (2002), 267--281.
\url{https://doi.org/10.2143/AST.32.2.1029}

\bibitem{bakergravesmorris}
G.~A.~Baker, Jr. and P.~Graves-Morris,
\emph{Pad\'e Approximants}, 2nd ed.,
Cambridge University Press, Cambridge, 1996.

\bibitem{battagliolaperalta2026}
M.~L.~Battagliola and O.~Peralta,
Optimization-free concentrated matrix-exponentials,
arXiv:2604.26304 (preprint).


\bibitem{bladtnielsen2017}
M.~Bladt and B.~F.~Nielsen,
\emph{Matrix-Exponential Distributions in Applied Probability},
Probability Theory and Stochastic Modelling, vol.~81,
Springer, New York, 2017.
\url{https://doi.org/10.1007/978-1-4939-7049-0}


\bibitem{bladtnielsenperalta2019}
M.~Bladt, B.~F.~Nielsen, and O.~Peralta,
Parisian types of ruin probabilities for a class of dependent risk-reserve
processes,
\emph{Scand. Actuar. J.} \textbf{2019} (2019), 32--61.
\url{https://doi.org/10.1080/03461238.2018.1483420}

\bibitem{borwein1983}

P.~B.~Borwein,
Rational approximations with real poles to $\e^{-x}$ and $x^n$,
\emph{J. Approx. Theory} \textbf{38} (1983), 279--283.
\url{https://doi.org/10.1016/0021-9045(83)90134-X}


\bibitem{boslevenbergortega2021}
L.~Bos, N.~Levenberg, and J.~Ortega-Cerd\`a,
Optimal polynomial prediction measures and extremal polynomial growth,
\emph{Constr. Approx.} \textbf{54} (2021), 431--453.
\url{https://doi.org/10.1007/s00365-020-09522-1}

\bibitem{bouchardelkarouitouzi2005}
B.~Bouchard, N.~El~Karoui, and N.~Touzi,
Maturity randomization for stochastic control problems,
\emph{Ann. Appl. Probab.} \textbf{15} (2005), 2575--2605.
\url{https://doi.org/10.1214/105051605000000593}

\bibitem{bultheeletal1999}
A.~Bultheel, P.~Gonz\'alez-Vera, E.~Hendriksen, and O.~Nj{\aa}stad,
\emph{Orthogonal Rational Functions},
Cambridge University Press, Cambridge, 1999.

\bibitem{carr1998}
P.~Carr,
Randomization and the American put,
\emph{Rev. Financ. Stud.} \textbf{11} (1998), 597--626.
\url{https://doi.org/10.1093/rfs/11.3.597}

\bibitem{gautschi2004}
W.~Gautschi,
\emph{Orthogonal Polynomials: Computation and Approximation},
Oxford University Press, Oxford, 2004.

\bibitem{golub1969}
G.~H.~Golub and J.~H.~Welsch,
Calculation of Gauss quadrature rules,
\emph{Math. Comp.} \textbf{23} (1969), 221--230.

\bibitem{hardylittlewoodpolya1952}
G.~H.~Hardy, J.~E.~Littlewood, and G.~P\'olya,
\emph{Inequalities}, 2nd ed.,
Cambridge University Press, Cambridge, 1952.

\bibitem{horvathhorvathtelek2020}
G.~Horv\'ath, I.~Horv\'ath, and M.~Telek,
High order concentrated matrix-exponential distributions,
\emph{Stochastic Models} \textbf{36} (2020), 176--192.
\url{https://doi.org/10.1080/15326349.2019.1702058}

\bibitem{horvathetal2020nilt}
I.~Horv\'ath, G.~Horv\'ath, S.~Al-Deen Almousa, and M.~Telek,
Numerical inverse Laplace transformation using concentrated matrix exponential
distributions,
\emph{Performance Evaluation} \textbf{137} (2020), 102067.
\url{https://doi.org/10.1016/j.peva.2019.102067}

\bibitem{horvathsafartelekzambo2016}
I.~Horv\'ath, O.~S\'af\'ar, M.~Telek, and B.~Z\'amb\'o,
Concentrated matrix exponential distributions,
in \emph{Computer Performance Engineering},
Lecture Notes in Computer Science, vol.~9951,
Springer, Cham, 2016, pp.~18--31.
\url{https://doi.org/10.1007/978-3-319-46433-6_2}

\bibitem{horvathtalyigastelek2018}
I.~Horv\'ath, Z.~Talyig\'as, and M.~Telek,
An optimal inverse Laplace transform method without positive and negative
overshoot---an integral based interpretation,
\emph{Electron. Notes Theor. Comput. Sci.} \textbf{337} (2018), 87--104.
\url{https://doi.org/10.1016/j.entcs.2018.03.035}

\bibitem{kornyikmichaletzky2017}
M.~Kornyik and G.~Michaletzky,
On the moments of roots of Laguerre-polynomials and the
Marchenko--Pastur law,
\emph{Ann. Univ. Sci. Budapest. Sect. Comput.} \textbf{46} (2017),
137--151.
\url{https://doi.org/10.71352/ac.46.137}

\bibitem{meszarostelek2022}
A.~M\'esz\'aros and M.~Telek,
Concentrated matrix exponential distributions with real eigenvalues,
\emph{Probab. Engrg. Inform. Sci.} \textbf{36} (2022), 1171--1187.
\url{https://doi.org/10.1017/S0269964821000309}


\bibitem{peralta2023}
O.~Peralta,
A Markov jump process associated with the matrix-exponential distribution,
\emph{J. Appl. Probab.} \textbf{60} (2023), 1--13.
\url{https://doi.org/10.1017/jpr.2022.25}


\bibitem{prevostrivoal2007}
M.~Pr\'evost and T.~Rivoal,
Remainder Pad\'e approximants for the exponential function,
\emph{Constr. Approx.} \textbf{25} (2007), 109--123.
\url{https://doi.org/10.1007/s00365-006-0635-6}

\bibitem{szego}
G.~Szeg\H{o},
\emph{Orthogonal Polynomials}, 4th ed.,
American Mathematical Society Colloquium Publications, vol.~23,
American Mathematical Society, Providence, RI, 1975.

\bibitem{teschl2012}
G.~Teschl,
\emph{Ordinary Differential Equations and Dynamical Systems},
Graduate Studies in Mathematics, vol.~140,
American Mathematical Society, Providence, RI, 2012.

\end{thebibliography}
\end{document}